\numberwithin{equation}{section}
\newtheorem{theorem}{Theorem}[section]
\newtheorem{proposition}[theorem]{Proposition}
\newtheorem{lemma}[theorem]{Lemma}
\newtheorem{corollary}[theorem]{Corollary}
\theoremstyle{definition}
\theoremstyle{remark}
\newcommand{\R}{\mathbb{R}}
\newcommand{\N}{\mathbb{N}}
\renewcommand{\S}{\mathbb{S}}
\renewcommand{\P}{\mathbb{P}}
\renewcommand{\hat}{\widehat}
\newcommand{\eps}{\varepsilon}
\newcommand{\scriptD}{\mathcal{D}}
\newcommand{\scriptE}{\mathcal{E}}
\newcommand{\scriptJ}{\mathcal{J}}
\newcommand{\scriptR}{\mathcal{R}}
\newcommand{\scriptS}{\mathcal{S}}
\newcommand{\jp}[1]{\langle{#1}\rangle}
\newcommand{\qtq}[1]{\:\text{#1}\:}
\DeclareMathOperator*{\ch}{ch}
\DeclareMathOperator*{\wklim}{wk-lim}
\DeclareMathOperator*{\supp}{supp}
\DeclareMathOperator*{\diam}{diam}
\DeclareMathOperator*{\dist}{dist}
                    \setlist[enumerate, 1]{1\textsuperscript{o}}
\begin{document}
\title[Extremizing adjoint Fourier restriction to the sphere]{On extremizing sequences for adjoint Fourier restriction to the sphere}
\author{Taryn C Flock and Betsy Stovall}
\begin{abstract}
In this article, we develop a linear profile decomposition for the $L^p \to L^q$ adjoint Fourier restriction operator associated to the sphere, valid for exponent pairs $p<q$ for which this operator is bounded.  Such theorems are new when $p \neq 2$.  We apply these methods to prove new results regarding the existence of extremizers and the behavior of extremizing sequences for the spherical extension operator.  Namely, assuming boundedness, extremizers exist if  $q>\max\{p,\tfrac{d+2}d p'\}$, or if $q=\tfrac{d+2}d p'$ and the operator norm exceeds a certain constant times the operator norm of the parabolic extension operator.  
\end{abstract}

\maketitle

\section{Introduction and statement of results}

We  consider the Fourier restriction and extension operators associated to the unit sphere $\S^d \subseteq \R^{d+1}$, which are given by
$$
\scriptR g(\omega) := \int_{\R^d} e^{-ix\omega}g(x)\, dx, \: \omega \in \S^d, \qquad \scriptE f(x) = \int_{\S^d} e^{ix\omega}f(\omega)\, d\sigma(\omega),
$$
for $g$ in the Schwartz class, $\scriptS(\R^{d+1})$, and $f \in C^\infty(\S^d)$.  Here $\sigma$ denotes $d$-dimensional Hausdorff measure on $\S^d$.  Despite decades of study, the precise conditions on exponents $p$ and $q$ for which (say) $\scriptE$ extends as a bounded linear operator from $L^p(\S^d)$ to $L^q(\R^{1+d})$ are not fully resolved for any $d \geq 2$.  

We do not seek to directly address such questions.  Rather, we ask, under the assumption of $L^p \to L^q$ boundedness of $\scriptE$, what is the behavior of bounded sequences $\{f_n\}$ whose extensions $\{\scriptE f_n\}$ do not converge to zero in norm.  This will lead us to develop a qualitative description of such sequences called a profile decomposition.  A particular scenario of interest is when the sequence $\{f_n\}$ is both $L^p$-normalized ($\|f_n\|_{L^p(\S^d;d\sigma)} \equiv 1$) and extremizing ($\|\scriptE f_n\|_{L^q(\R^{1+d})} \to \|\scriptE\|_{L^p(\S^d;d\sigma) \to L^q(\R^{1+d})}$), in which case our profile decompositions provide quite a bit of information (at least when $q>p$).  

In order to state our results, we will need some notation and terminology.  Noting that $\scriptR$ and $\scriptE$ are dual to one another, we denote their (common) operator norm by
$$
S_{p \to q} := \sup_{\|f\|_{L^p(\S^d;d\sigma)} = 1} \|\scriptE f\|_{L^q(\R^{d+1})} = \sup_{\|g\|_{L^{q'}(\R^{d+1})} = 1} \|\scriptR g\|_{L^{p'}(\S^d;d\sigma)},
$$
where the suprema are taken over (e.g.) smooth, compactly supported functions.   These operator norms are conjectured to be finite whenever $q \geq \tfrac{d+2}d p'$ and $q > \tfrac{2(d+1)}d$ both hold, and these conditions are known to be necessary.  

We are interested in the questions of whether there exist nonzero functions for which equality holds in the restriction/extension inequalities
\begin{equation} \label{E:restriction/extension}
\|\scriptR g\|_{L^{p'}(\S^d;d\sigma)} \leq S_{p \to q} \|g\|_{L^{q'}(\R^{d+1})}, \qquad \|\scriptE f\|_{L^q(\R^{d+1})} \leq S_{p \to q} \|f\|_{L^p(\S^d;d\sigma)},
\end{equation}
in cases where the operator norms are finite, and whether $L^p$-normalized extremizing sequences are convergent in some sense.  We are further interested in connections between these operators and the restriction/extension operators associated to the paraboloid, $\P:=\{(\tfrac12|\xi|^2,\xi):\xi \in \R^d\}$:  
$$
\scriptR_\P g(\xi):= \int_{\R^{d+1}} e^{-i(\frac12|\xi|^2,\xi) x}g(x)\, dx, \: \xi \in \R^d, \qquad \scriptE_\P f(x):= \int_{\R^d} e^{ix(\frac12|\xi|^2,\xi)}f(\xi)\, d\xi,
$$
whose common operator norms we denote by
\begin{equation} \label{E:rest/extn parab}
P_{p \to q} :=  \sup_{\|f\|_{L^p(\R^d)} = 1} \|\scriptE_\P f\|_{L^q(\R^{d+1})} = \sup_{\|g\|_{L^{q'}(\R^{d+1})} = 1} \|\scriptR_\P g\|_{L^{p'}(\R^d)}.
\end{equation}

More generally, we extend the profile decomposition methods of Fanelli--Visciglia--Vega \cite{FVV2011} for the case $p=2$, $q> \tfrac{d+2}dp'$ and Frank--Lieb--Sabin in \cite{FLS} for the case $p=2$, $q=\tfrac{d+2}dp'$, to the region $p < q \leq \tfrac{d+2}d p'$, $p\neq 1$. (A more complete history is given in the next section.) 

Before stating our results, we set some basic terminology. Fix a pair of exponents $(p,q)$.  A sequence $(f_n)$ in $L^p(\S^d)$ is \textit{$L^p$ normalized} if $\|f_n\|_p = 1$ for all $n$ and is \textit{extremizing} if $\lim_{n \to \infty} \|\scriptE f_n\|_q/\|f_n\|_p = S_{p \to q}$.  (We note that normalized extremizing sequences exist for every exponent pair $(p,q)$; whether they converge and what are their properties are more subtle questions.)

Many of our results are partly conditional on progress toward the (adjoint) restriction conjecture for the sphere.  We adopt the following convention, which will make for cleaner statements later on:  We say that the extension conjecture holds at $(p,q) \in [1,\infty]^2$ if $S_{p \to q}<\infty$, if $q \leq \tfrac{2(d+1)}d$, or if $q < \tfrac{d+2}d p'$.  (Of course, in the latter two cases, the extension operator is known to be unbounded.)  In the non-vacuous range, the conjecture has been verified for all $(p,q)$ when $d=1$ \cite{Fefferman, Zygmund}, and, in higher dimensions, has been  verified on a neighborhood of the region $q \geq \tfrac{2(d+3)}{d+1}$, $q \geq \tfrac{d+2}d p'$ (see \cite{BassamShayya, Guth_d=2, Guth_dgeq3, HickmanRogers, TaoParab, TVV, HWang} for more precision regarding the current status).

Our results are cleanest off of the parabolic scaling line, wherein  H\"older's inequality rules out the possibility that extremizing sequences might concentrate.  

\begin{theorem} \label{T:off scaling}
Assume that $q>\max\{p,\tfrac{d+2}dp'\}$ and that the extension conjecture holds on a neighborhood of $(p,q)$.  Then every $L^p$-normalized extremizing sequence for the inequality $\|\scriptE f\|_q \leq S_{p \to q} \|f\|_p$ is precompact in $L^p$ after the application of an appropriate sequence of spacetime modulations.  In particular, extremizers exist for this inequality.  
\end{theorem}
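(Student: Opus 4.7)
The plan is to apply the linear profile decomposition developed earlier in this paper to an $L^p$-normalized extremizing sequence $(f_n)$, and then exploit the two strict inequalities $q>p$ and $q>\tfrac{d+2}{d}p'$ to force all of the mass into a single profile. Because $(p,q)$ lies strictly above the parabolic scaling line, concentration/scaling is strictly subextremal for the quotient $\|\scriptE f\|_q/\|f\|_p$, and so the only nontrivial group of symmetries that can appear in the decomposition of an extremizing sequence consists of spacetime modulations. Thus, after passing to a subsequence, we can write
\[
f_n \;=\; \sum_{j=1}^J T_n^j\phi^j + r_n^J, \qquad T_n^j\phi(\omega) := e^{it_n^j\cdot\omega}\phi(\omega), \quad t_n^j\in\R^{d+1},
\]
with the frequency parameters pairwise divergent ($|t_n^j-t_n^k|\to\infty$ for $j\neq k$) and the remainder satisfying $\lim_{J\to\infty}\limsup_{n\to\infty}\|\scriptE r_n^J\|_q = 0$.

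The next step is to record the two Pythagorean-type identities that come out of the decomposition: using that modulations are $L^p$-isometries on $\S^d$, translations are $L^q$-isometries on $\R^{d+1}$, and applying Brezis--Lieb together with the asymptotic orthogonality of the parameters, one has
\[
\|f_n\|_p^p = \sum_{j=1}^J\|\phi^j\|_p^p + \|r_n^J\|_p^p + o_n(1),
\qquad
\|\scriptE f_n\|_q^q = \sum_{j=1}^J\|\scriptE\phi^j\|_q^q + \|\scriptE r_n^J\|_q^q + o_n(1).
\]
Letting $n\to\infty$ and then $J\to\infty$, the normalization gives $\sum_j\|\phi^j\|_p^p\leq 1$, while the extremizing property combined with $\|\scriptE\phi^j\|_q\leq S_{p\to q}\|\phi^j\|_p$ gives $1\leq \sum_j\|\phi^j\|_p^q$. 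Since $q>p$ and each $\|\phi^j\|_p\leq 1$, one has $\|\phi^j\|_p^q\leq \|\phi^j\|_p^p$, which forces the chain
\[
1\;\leq\;\sum_j\|\phi^j\|_p^q\;\leq\;\sum_j\|\phi^j\|_p^p\;\leq\;1
\]
to consist entirely of equalities. The equality $\|\phi^j\|_p^q=\|\phi^j\|_p^p$ term by term forces each $\|\phi^j\|_p\in\{0,1\}$, and then the total being $1$ forces exactly one profile, say $\phi:=\phi^1$, to satisfy $\|\phi\|_p=1$ with all others vanishing. The $L^p$ identity now gives $\|r_n^J\|_p\to 0$ as $n\to\infty$ for each fixed $J$, hence $\|(T_n^1)^{-1}f_n-\phi\|_p\to 0$; this is precompactness after modulation, and $\phi$ is the claimed extremizer since equality holds throughout the chain.

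The main obstacle is not this closing argument, which is essentially the algebraic exploitation of the strict gap $q>p$, but rather the construction and properties of the profile decomposition itself: the reduction to the modulation-only group above the scaling line, the asymptotic orthogonality of the $L^q$-contributions of extensions of different profiles, and the decay of the remainder in the strong $L^q$-extension topology. These are precisely the ingredients supplied by the authors' profile decomposition (which in turn relies on the hypothesis that the extension conjecture holds on a neighborhood of $(p,q)$, via a refined/bilinear type estimate), and once they are granted, the proof proceeds as described.
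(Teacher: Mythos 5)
There is a genuine gap, and it is precisely the issue the paper is designed to work around.  You write the $L^p$ Pythagorean identity
\[
\|f_n\|_p^p = \sum_{j=1}^J\|\phi^j\|_p^p + \|r_n^J\|_p^p + o_n(1),
\]
justified by ``modulations are $L^p$-isometries.''  That identity is false outside $p=2$: subtracting a weak limit from a bounded $L^p$ sequence does not, in general, decrease the $L^p$ norm, and the paper flags this explicitly (``when we are not in a Hilbert space, subtracting a weak limit from a sequence does not necessarily decrease the limit of the norms'').  What Theorem~\ref{T:space1} actually supplies is the weaker almost-orthogonality bound
\(
\bigl(\sum_j\|\phi^j\|_p^{\tilde p}\bigr)^{1/\tilde p}\le\liminf\|f_n\|_p
\)
with $\tilde p:=\max\{p,p'\}$, and there is no corresponding statement that $\|r_n^J\|_p\to 0$.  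The exponent $\tilde p$ is not cosmetic: when $1<p<2$ you have $\tilde p=p'>p$, and the closing chain must compare $q$ to $\tilde p$ rather than to $p$.  The needed strict inequality $q>\tilde p$ uses \emph{both} parts of the hypothesis, since $q>\tfrac{d+2}{d}p'>p'$ supplies $q>p'$; your write-up only invokes $q>p$, which is insufficient when $p<2$.  The corrected chain is
\(
S_{p\to q}\le S_{p\to q}\sup_j\|\phi^j\|_p^{1-\tilde p/q}\bigl(\sum_j\|\phi^j\|_p^{\tilde p}\bigr)^{1/q}\le S_{p\to q},
\)
which forces a single surviving profile with $\|\phi^1\|_p=1$.

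The second (smaller) gap is in passing from ``one profile of full mass'' to precompactness.  You conclude $\|r_n^1\|_p\to 0$ from the Pythagorean identity, which is not available.  The paper instead observes that $e^{-ix_n^1\omega}f_n\rightharpoonup\phi^1$ weakly in $L^p$ with $\|\phi^1\|_p=1=\lim\|f_n\|_p$, and then invokes Theorem~2.11 of Lieb--Loss (weak convergence together with convergence of norms implies strong convergence in uniformly convex $L^p$, $1<p<\infty$) to upgrade to strong convergence.  That step is essential here.  Aside from these two points, the overall strategy --- reduce to the scale-one decomposition via Proposition~\ref{P:subcrit freq}, apply the profile decomposition, and force a single profile --- does match the paper.
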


In the case $p=2$, this result is due to Fanelli--Visciglia--Vega \cite{FVV2011}.  The hypothesis $q > p$ is likely an artifact of our proof, which uses a convexity argument.  Indeed, in certain special cases, such as when $p=\infty$ and $q$ is an even integer, one can use other means to prove the existence of extremizers \cite{CS}.

For inequalities with $(p^{-1},q^{-1})$ on the parabolic scaling line $\{q=\tfrac{d+2}d p'\}$, we cannot (yet) rule out the possibility of concentration.

\begin{theorem}\label{T:scaling}
Let $1 < p<\tfrac{2(d+1)}d$ and $q=\tfrac{d+2}d p'$, and assume that the extension conjecture for the sphere holds on a neighborhood of $(p,q)$.  Let $(f_n)$ be an $L^p(\S^d)$ normalized extremizing sequence of \eqref{E:restriction/extension}.  After passing to a subsequence, either:\\
(i) There exists a sequence $\{x_n\} \subseteq \R^{d+1}$ such that $e^{i x_n \omega}f_n(\omega)$ converges in $L^p(S^d)$ to an extremizer $f$ of \eqref{E:restriction/extension}, \\
or\\
(ii) There exist sequences $\{x_n\} \subseteq \R^{1+d}$, orthogonal transformations  $\{R_n\} \subseteq O(d)$, positive numbers $\lambda_n \searrow 0$, and functions $\phi^\pm \in L^p(\R^d)$ such that 
\begin{equation}\label{E:parab limit}
\lim_{n \to \infty}
\bigl\|e^{ix_n\omega}f_n(R_n\omega)
-\lambda_n^{-d/p} \sum_\pm \phi^\pm(\tfrac{\omega'}{\lambda_n})\chi_{\{\pm\omega_1>\frac12\}}\|_p = 0.
\end{equation}  
\end{theorem}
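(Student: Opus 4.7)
The proof combines the linear profile decomposition established earlier in the paper with a convexity argument made available by the fact that $q>p$; this inequality holds automatically here because $p<\tfrac{2(d+1)}d$ and $q=\tfrac{d+2}{d}p'$.

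Apply the profile decomposition to the bounded sequence $(f_n)\subset L^p(\S^d)$ to write, for each $J\ge 1$,
\[
f_n=\sum_{j=1}^J T_j^n\Phi_j+r_n^J,
\]
with $\limsup_n \|\scriptE r_n^J\|_q\to 0$ as $J\to\infty$, where each profile $\Phi_j$ is either a \emph{spherical} profile $\psi_j\in L^p(\S^d)$ transported by a spacetime modulation and a rotation, or a \emph{parabolic pair} $(\phi_j^+,\phi_j^-)\in L^p(\R^d)^2$ placed near antipodal caps at a scale $\lambda_{j,n}\to 0$ (with rotation $R_{j,n}$ and modulation $x_{j,n}$). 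The decomposition comes with asymptotic $L^p$-orthogonality,
\[
\sum_j\alpha_j^p+\limsup_n\|r_n^J\|_p^p=1+o_J(1),\qquad \alpha_j:=\begin{cases}\|\psi_j\|_p&\text{($j$ spherical),}\\ \bigl(\|\phi_j^+\|_p^p+\|\phi_j^-\|_p^p\bigr)^{1/p}&\text{($j$ parabolic),}\end{cases}
\]
and with asymptotic $L^q$-orthogonality of extensions: in the limit, a spherical profile contributes $\|\scriptE\psi_j\|_q^q\le S_{p\to q}^q\alpha_j^q$, while a parabolic pair contributes the $L^q$-norm of its rescaled parabolic extension, bounded by $P_{p\to q}^q\alpha_j^q\le S_{p\to q}^q\alpha_j^q$ (using $P_{p\to q}\le S_{p\to q}$, which follows from a Knapp-cap comparison between the sphere and the paraboloid).

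Extremality $\|\scriptE f_n\|_q\to S_{p\to q}$ combined with the convexity inequality $\sum_j\alpha_j^q\le(\sup_j\alpha_j)^{q-p}\sum_j\alpha_j^p$ (valid because $q>p$) then yields $S_{p\to q}^q\le S_{p\to q}^q(\sup_j\alpha_j)^{q-p}$, forcing $\sup_j\alpha_j\ge 1$. Combined with $\sum_j\alpha_j^p\le 1$, this means all but one profile vanishes, the surviving profile $\Phi_{j_0}$ has $\alpha_{j_0}=1$, and the remainder satisfies $\|r_n^J\|_p\to 0$. If $\Phi_{j_0}$ is spherical, then after the associated modulation and rotation $f_n$ converges in $L^p$ to $\psi_{j_0}$, which is $L^p$-normalized with $\|\scriptE\psi_{j_0}\|_q=S_{p\to q}$: this is case (i). If $\Phi_{j_0}$ is a parabolic pair, we recover case (ii) directly from the form of the decomposition.

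The principal technical obstacle lies in the second step: the asymptotic $L^q$-orthogonality of extensions at the critical scaling, and in particular the joint treatment of antipodal bubbles. Because the extensions from the $+e_1$ and $-e_1$ caps carry phases $e^{\pm ix_0}$ that persist as $\lambda_{j,n}\to 0$, the two halves cannot be decoupled by the usual scale or translation separation, and must be packaged together as a single parabolic profile whose effective operator norm is $P_{p\to q}$. Supplying this packaging together with the corresponding sharp extension bound is the main content of the profile decomposition theorem earlier in the paper, and is what makes $\scriptE_\P$ enter the statement of the dichotomy.
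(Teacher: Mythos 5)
Your high-level plan — a profile decomposition plus a convexity argument exploiting $q>p$, forcing a single surviving profile — is indeed the backbone of the paper's proof. But two of the ingredients you assert are stronger than anything the paper's decomposition actually delivers, and the gaps are exactly the places where the $p\neq 2$ case is hard.

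First, you write an asymptotic $L^p$-orthogonality of the form $\sum_j\alpha_j^p+\limsup_n\|r_n^J\|_p^p=1+o_J(1)$ and then use $\sum_j\alpha_j^p\le 1$ in the convexity step. For $p\neq 2$ the paper does \emph{not} prove anything of this form. The spatial profile decompositions (Theorems~\ref{T:space1} and~\ref{T:large space}) only provide one-sided estimates with exponent $\tilde p=\max\{p,p'\}$, for instance $\bigl[\sum_\pm(\sum_j\|\phi^{j,\pm}\|_p^{\tilde p})^{p/\tilde p}\bigr]^{1/\tilde p}\le\liminf\|f_n\|_p$, and crucially they give \emph{no} $L^p$ control on $r_n^J$ at all — the remainder may blow up in $L^p$. (The paper calls this a ``weak'' profile decomposition precisely for this reason, and cites Opial's example to emphasize that subtracting weak limits need not decrease $L^p$ norms outside a Hilbert space.) The paper's convexity argument in the concentrating case, \eqref{E: g extrem}, therefore runs with exponent $\tilde p$, not $p$, and uses $\lim\|f_n\|_p^{\tilde p}$ on the right, which is a meaningful upper bound because of the $\tilde p$-valued almost-orthogonality; your version with $p$ is not available. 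Since $q>\tilde p$ still holds along the scaling line, the logic survives once the correct exponent is inserted, but as written your chain of inequalities rests on an identity that fails.

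Second, ``we recover case (ii) directly from the form of the decomposition'' is too quick, for the same reason: the decomposition does not tell you $\|r_n^J\|_p\to 0$. After the convexity step has forced $\phi^{j,\pm}\equiv 0$ for $j\neq 1$ and $\|\phi^{1,+}\|_p^p+\|\phi^{1,-}\|_p^p=1$, the paper still has to upgrade weak convergence to strong $L^p$ convergence. It does this by weak lower semicontinuity on each hemisphere plus disjointness, which forces $\|\phi^{1,\pm}\|_p=\lim\|f_n\chi_{\{\pm\omega_1>0\}}\|_p$, and then weak convergence together with convergence of norms gives strong convergence (a Radon--Riesz step; in the nonconcentrating branch the paper invokes Theorem~2.11 of \cite{LiebLoss} for the same purpose). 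You should make this step explicit, because without it you have not produced the strong $L^p$ limit that \eqref{E:parab limit} asserts.

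A structural difference worth noting, though not a gap: the paper does not use a single unified spherical-or-parabolic profile decomposition. It first runs one step of the frequency decomposition (Theorem~\ref{T:freq}), where the $L^p$ decoupling \emph{is} exact because the $F_n^j$ are truncations of $f_n$, and a Lieb-style two-piece convexity argument kills the remainder. This reduces to a single frequency scale, after which either Theorem~\ref{T:space1} or Theorem~\ref{T:large space} applies. That ordering matters precisely because exact $L^p$ decoupling holds at the frequency stage but only $\tilde p$ almost-orthogonality at the spatial stage, and your proposal elides the distinction.
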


Two remarks are in order.  First, the $p=2$ case was already resolved in \cite{FLS}.  Second, when $p=1$, existence of extremizers and noncompactness modulo symmetries of normalized extremizing sequences (or even, sequences of normalized extremizers) are both elementary to prove, as any nonnegative $L^1$ function is extremal.

We say that (along a subsequence) $(f_n)$ converges modulo the modulation symmetry in Conclusion (i) and that $(f_n)$ concentrates antipodally and converges modulo translations, dilations (a nonsymmetry), and rotations in Conclusion (ii).  

We can improve upon Theorem~\ref{T:scaling} by estimating the operator norm in the  case of concentration, generalizing the main results of \cite{FLS} and \cite{ChristShao} (therein carried out in the $p=2$ case).  This will require some further notation.  

For $1 \leq p < q=\tfrac{d+2}dp'$, we define
\begin{equation} \label{E:def alpha}
\alpha_{p,q}:=\max_{t \in [0,1]} \frac{\|1+t e^{i\theta}\|_{L^q([0,2\pi],d\theta/2\pi)}}{(1+t^p)^{1/p}}.
\end{equation}
The parameter $t$ will arise as the ratio between the norms  of the extensions of two antipodally concentrating profiles.  Considering such pairs will lead us to a lower bound for $S_{p\to q}$. 

\begin{proposition} \label{P:op norms}
Let $1 \leq p < \tfrac{2(d+1)}d$ and set $q:=\tfrac{d+2}d p'$.  Then
\begin{equation} \label{E:op norms}
S_{p \to q} \geq \alpha_{p,q} P_{p \to q}.
\end{equation}
\end{proposition}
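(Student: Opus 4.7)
The plan is to construct a concentrating test sequence $f_\lambda \in L^p(\S^d)$ placing rescaled copies of a near-extremizer of $\scriptE_\P$ at the two poles, chosen so that the relative phase of the two antipodal contributions encodes exactly the averaging in the definition of $\alpha_{p,q}$.

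Fix $\eps > 0$ and $t \in [0,1]$. Pick $\phi \in C_c^\infty(\R^d)$ with $\|\phi\|_p = 1$ and $\|\scriptE_\P \phi\|_q \geq (1-\eps)P_{p \to q}$, and set $\phi^-(\xi) := t\,\overline{\phi(-\xi)}$, so that $\|\phi^-\|_p = t$. The substitution $\eta=-\xi$ and complex conjugation give the key identity
\[
\scriptE_\P \phi^-(y_1, y') = t\,\overline{\scriptE_\P\phi(-y_1, y')}.
\]
Using the local charts $\omega = (\pm\sqrt{1-|\omega'|^2}, \omega')$ near the poles, define
\[
f_\lambda(\omega) := \lambda^{-d/p}\bigl[\phi(\omega'/\lambda)\chi_{\{\omega_1 > 1/2\}} + \phi^-(\omega'/\lambda)\chi_{\{\omega_1 < -1/2\}}\bigr].
\]
Compact support of $\phi$ makes the two pieces disjoint for small $\lambda$ and makes the chart Jacobian $(1-|\omega'|^2)^{-1/2}$ tend to $1$, so $\|f_\lambda\|_p^p \to 1+t^p$.

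Expanding $x\cdot\omega$ near each pole via $\sqrt{1-|\omega'|^2} = 1 - |\omega'|^2/2 + O(|\omega'|^4)$, substituting $\omega' = \lambda \xi$, and introducing the rescaled variables $(y_1, y') := (-\lambda^2 x_1, \lambda x')$, the north cap contributes $\lambda^{d-d/p} e^{ix_1} G(y)$ and, thanks to the identity above, the south cap contributes $\lambda^{d-d/p} t e^{-ix_1}\overline{G(y)}$, where $G := \scriptE_\P \phi$. Factoring $|G(y)|$ from $|e^{ix_1}G(y) + t e^{-ix_1}\overline{G(y)}|$ and using the scaling identity $q(d-d/p) = d+2$ together with $dx = \lambda^{-(d+2)}dy$, one obtains
\[
\|\scriptE f_\lambda\|_q^q = \int_{\R^{d+1}} |G(y)|^q\,\bigl|1 + t e^{i(2y_1/\lambda^2 - 2\gamma(y))}\bigr|^q\,dy + o(1),
\]
where $\gamma(y) := \arg G(y)$.

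The heart of the argument is a rapid-oscillation averaging: the continuous $2\pi$-periodic function $h(\theta) := |1+te^{i\theta}|^q$ can be uniformly approximated by trigonometric polynomials, and Riemann--Lebesgue applied to each $|G(y)|^q e^{2ik\gamma(y)} \in L^1(\R^{d+1})$ with $k \neq 0$ kills all nonzero Fourier modes. Only the $k=0$ term survives, giving
\[
\lim_{\lambda \to 0}\|\scriptE f_\lambda\|_q^q = \|1+te^{i\theta}\|_{L^q(d\theta/2\pi)}^q\,\|G\|_q^q.
\]
Combining with $\|f_\lambda\|_p \to (1+t^p)^{1/p}$ and $\|G\|_q \geq (1-\eps)P_{p \to q}$, the ratio $\|\scriptE f_\lambda\|_q/\|f_\lambda\|_p$ tends to at least $\tfrac{\|1+te^{i\theta}\|_{L^q(d\theta/2\pi)}}{(1+t^p)^{1/p}}(1-\eps)P_{p \to q}$. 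Since $S_{p\to q}$ dominates this ratio, taking the supremum over $t \in [0,1]$ and sending $\eps \to 0$ yields the proposition.

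The main obstacle is promoting the pointwise sphere-to-paraboloid approximation of $\scriptE f_\lambda$ to an $L^q$-statement. Compact support of $\phi$ provides uniform pointwise control on any bounded $y$-set; the tails are dominated using $G \in L^q$ together with a uniform extension estimate on $\scriptE f_\lambda$ coming from the assumed boundedness of $\scriptE$.
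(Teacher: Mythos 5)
Your proposal is correct and follows essentially the same route as the paper's: concentrate a near-extremizer of $\scriptE_\P$ and its $t$-scaled conjugate reflection at the two poles, pass to the parabolic limit, and use oscillatory averaging to extract the factor $\|1+te^{i\theta}\|_{L^q(d\theta/2\pi)}$ before optimizing over $t$ (the paper uses an exact parabolic extremizer from \cite{BSparab} and cites Lemma~6.1 of \cite{FLS} for the averaging, but these are cosmetic differences). One small correction to your final paragraph: the $L^q$-tail control should come from the stationary-phase decay $|G(y)|\lesssim_\phi\langle y\rangle^{-d/2}$ valid for $\phi\in C_c^\infty$, as in the proof of Lemma~\ref{L:sphere to parab}, rather than from boundedness of $\scriptE$, since the proposition does not assume $S_{p\to q}<\infty$.
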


The quantity 
$$
\beta_{p,q}:= 2^{\frac1{r'}}\left(\frac{\Gamma(\frac{q+1}2)}{\sqrt \pi \Gamma(\frac{q+2}2)}\right)^{\frac1q}, \qquad r:=\max\{p,2\},
$$
seems somewhat easier to understand than $\alpha_{p,q}$, and we note the following relationship between the two.  

\begin{proposition} \label{P:alpha beta}
For $p\geq 2$,  \( \alpha_{p \to q}=\beta_{p\to q}\); while for $p<2$, \(  \alpha_{p \to q}<\beta_{p\to q}\). 
\end{proposition}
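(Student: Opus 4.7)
The plan is to reduce both claims to a single key lemma about the $p=2$ case and then apply the power mean inequality to handle general $p$. Set $F_t(\theta) := 1 + te^{i\theta}$ and $R_p(t) := \|F_t\|_{L^q(d\theta/(2\pi))}/(1+t^p)^{1/p}$, so that $\alpha_{p,q} = \max_{t\in[0,1]} R_p(t)$. A direct computation gives $\|F_1\|_q^q = 2^q\,\Gamma(\tfrac{q+1}{2})/(\sqrt{\pi}\,\Gamma(\tfrac{q+2}{2}))$, which shows that $\beta_{p,q}=R_p(1)$ when $p\geq 2$ and $\beta_{p,q}=R_2(1)$ when $p\leq 2$. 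Throughout, $q>2$ (since $p<q$ in \eqref{E:def alpha} forces $p<2(d+1)/d$, hence $q>2$).

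\emph{Key lemma: $R_2(t)\leq R_2(1)$ on $[0,1]$, strictly for $t<1$.} The trick is to substitute $s:=2t/(1+t^2)$, a bijection of $[0,1]$ onto itself with $s=1$ iff $t=1$. Since $|F_t|^2/(1+t^2) = 1+s\cos\theta$,
\[
R_2(t)^q = \int_0^{2\pi}(1+s\cos\theta)^{q/2}\,\frac{d\theta}{2\pi} =: \phi(s).
\]
For $q\geq 2$ the integrand has non-negative second $s$-derivative, so $\phi$ is convex on $[0,1]$ (strictly for $q>2$); Jensen's inequality yields $\phi(1)\geq 1=\phi(0)$, strictly for $q>2$. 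Hence $\phi$ attains its maximum uniquely at $s=1$.

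\emph{Case $p\geq 2$.} Here $\alpha_{p,q}\geq R_p(1)=\beta_{p,q}$ is trivial. For the reverse inequality, rewrite the key lemma as $\|F_t\|_q\leq \|F_1\|_q\bigl((1+t^2)/2\bigr)^{1/2}$ and apply the power mean inequality $((1+t^2)/2)^{1/2}\leq ((1+t^p)/2)^{1/p}$ (valid for $p\geq 2$) to conclude $\|F_t\|_q\leq \|F_1\|_q((1+t^p)/2)^{1/p}$, i.e., $R_p(t)\leq R_p(1)$.

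\emph{Case $p<2$.} The reversed power mean inequality $(1+t^p)^{1/p}\geq(1+t^2)^{1/2}$ (strict for $t>0$) gives $R_p(t)\leq R_2(t)$, and the key lemma gives $R_2(t)\leq R_2(1)=\beta_{p,q}$. I claim this is strict at every $t\in[0,1]$: for $t\in(0,1)$ both inequalities are strict; at $t=0$ one has $R_p(0)=1<\beta_{p,q}$ (the last inequality using $\phi(1)>1$ from the key lemma); and at $t=1$ one has $R_p(1)=\|F_1\|_q/2^{1/p}<\|F_1\|_q/\sqrt 2=\beta_{p,q}$ since $2^{1/p}>\sqrt 2$ when $p<2$. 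Continuity and compactness of $[0,1]$ then force $\alpha_{p,q}<\beta_{p,q}$. The main work throughout is the key lemma; once the substitution $s=2t/(1+t^2)$ exposes the convex structure of $R_2^q$, the remaining steps reduce to standard power mean manipulations.
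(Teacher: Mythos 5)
Your proof is correct, and it takes a genuinely different route from the paper's. The paper derives Proposition~\ref{P:alpha beta} as a corollary of Proposition~\ref{P:op norms} and Lemma~\ref{L:poles gnj}, which means unwinding the extension-operator machinery: the explicit construction of antipodally concentrating sequences, the asymptotic decoupling of the two hemispherical extensions, Lemma~6.1 of \cite{FLS}, and existence of extremizers for $\scriptE_\P$ (hence, implicitly, finiteness of $P_{p\to q}$). You instead verify the statement directly from the definitions of $\alpha_{p,q}$ and $\beta_{p,q}$ with no reference to any operator, which is the natural level at which to prove such a purely elementary comparison and has the advantage of being entirely unconditional.

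The core scalar inequality is the same in both approaches: setting $\phi(s)=\tfrac1{2\pi}\int_0^{2\pi}(1+s\cos\theta)^{q/2}\,d\theta$, one needs $\phi(s)<\phi(1)$ on $[0,1)$ when $q>2$. In the paper this is buried inside Lemma~6.1 of \cite{FLS}; you extract it directly via the substitution $s=2t/(1+t^2)$, convexity of $s\mapsto(1+s\cos\theta)^{q/2}$, and Jensen's inequality for $\phi(1)>1=\phi(0)$. Reducing both cases of $p$ to this $p=2$ key lemma and then pivoting with a single comparison of $(1+t^p)^{1/p}$ against $(1+t^2)^{1/2}$ is a clean organization. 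Two small quibbles on exposition, neither affecting correctness: the inequality $(1+t^p)^{1/p}\geq(1+t^2)^{1/2}$ for $p<2$ is the monotonicity of $\ell^p$ norms rather than a ``reversed power mean inequality'' (the power mean inequality carries the normalization $1/2$, which flips the direction), and the observation that $q>2$ throughout the admissible range deserves the explicit justification you implicitly used, namely that $q=\tfrac{d+2}{d}p'$ is decreasing in $p$ and equals $\tfrac{2(d+1)}{d}>2$ at the endpoint $p=\tfrac{2(d+1)}{d}$.
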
 

 The transition at $p=2$ in Proposition~\ref{P:alpha beta} is connected with a bifurcation of our results along the parabolic scaling line into the cases $1 < p < 2$ and $2 < p <\tfrac{2(d+1)}d$. We begin with the latter case, in which our results are stronger. 

\begin{theorem} \label{T:op norms big p}
Let $2\leq p<\tfrac{2(d+1)}d$ and $q=\tfrac{d+2}dp'$, and assume that the extension conjecture for the sphere holds on a neighborhood of $(p,q)$. 
If $ S_{p \to q} > \beta_{p,q} P_{p \to q}$,  then  extremizers exist for the extension operator in \eqref{E:restriction/extension} and all normalized extremizing sequences possess subsequences that converge in $L^p$, after modulation. 
Otherwise, $S_{p \to q} =  \beta_{p,q} P_{p \to q}$, and concentrating, extremizing sequences $(f_n)$ exist; after passing to a subsequence and normalizing, they must obey \eqref{E:parab limit}, for some $\phi^\pm$ extremal for $\scriptE_\P:L^p(\R^d) \to L^q(\R^{d_1})$ and obeying
 $$
|\scriptE \phi^+(x_1,x')| = |\scriptE \phi^-(-x_1,x')|. 
 $$
\end{theorem}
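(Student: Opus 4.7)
The plan is to combine the profile decomposition of Theorem~\ref{T:scaling} with a direct asymptotic computation of the extension $L^q$ norm on concentrating profiles. First I would apply Theorem~\ref{T:scaling} to an $L^p$-normalized extremizing sequence $(f_n)$: either conclusion (i) holds (and we immediately get an extremizer and $L^p$ convergence after modulation), or conclusion (ii) produces profiles $\phi^\pm\in L^p(\R^d)$ with $\|\phi^+\|_p^p+\|\phi^-\|_p^p=1$ and scales $\lambda_n\searrow 0$. In case (ii), after exploiting the sphere symmetries (which preserve $\|\scriptE f_n\|_q$), I may replace $f_n$ by $\lambda_n^{-d/p}\sum_\pm\phi^\pm(\omega'/\lambda_n)\chi_{\{\pm\omega_1>1/2\}}$ modulo $o_{L^p}(1)$.

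The core analytic step is to establish, in case (ii), that
\begin{equation}\label{plan:limit}
\lim_{n\to\infty}\|\scriptE f_n\|_q^q = \int_{\R^{d+1}}\frac{1}{2\pi}\int_0^{2\pi}\bigl|\scriptE_\P\phi^+(-y_1,y') + e^{i\theta}\scriptE_\P\phi^-(y_1,y')\bigr|^q\,d\theta\,dy
\end{equation}
and to bound the right side by $\beta_{p,q}P_{p\to q}$. For \eqref{plan:limit}, expanding $\omega_1=\pm\sqrt{1-|\omega'|^2}\approx \pm 1\mp\tfrac12|\omega'|^2$ and substituting $\omega'=\lambda_n\xi$, $y=(\lambda_n^2 x_1,\lambda_n x')$ converts the extension from the $\pm$ cap into $\lambda_n^{d/p'}e^{\pm ix_1}\scriptE_\P\phi^\pm(\mp y_1,y')$ up to vanishing errors; the resulting cross phase $e^{-2iy_1/\lambda_n^2}$ oscillates rapidly and, by a Riemann--Lebesgue style argument (first justified for continuous compactly supported approximations to $\scriptE_\P\phi^\pm$ and then passed to the limit in $L^q$), averages to a uniform integral over $\theta$. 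For the bound, applying the defining inequality of $\alpha_{p,q}$ pointwise in $y$, then Minkowski's inequality in $L^{q/p}(\R^{d+1})$ (valid because $q/p\geq 1$ on the scaling line), and finally the parabolic extension bound $\|\scriptE_\P\phi^\pm\|_q\leq P_{p\to q}\|\phi^\pm\|_p$ yields
$$\lim_{n\to\infty}\|\scriptE f_n\|_q \leq \alpha_{p,q}P_{p\to q}=\beta_{p,q}P_{p\to q},$$
the equality from Proposition~\ref{P:alpha beta} since $p\geq 2$.

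The dichotomy then finishes the theorem. If $S_{p\to q}>\beta_{p,q}P_{p\to q}$, case (ii) is excluded, so case (i) delivers both assertions. Otherwise, combining the bound with Propositions~\ref{P:op norms} and \ref{P:alpha beta} forces $S_{p\to q}=\beta_{p,q}P_{p\to q}$, and for any concentrating extremizing sequence equality must hold throughout the chain. This successively forces $\phi^\pm$ to be extremizers of $\scriptE_\P$, proportionality of $|\scriptE_\P\phi^+(-y_1,y')|$ and $|\scriptE_\P\phi^-(y_1,y')|$ (Minkowski equality for $q/p>1$), and the constant ratio to equal the maximizer $t^*$ in \eqref{E:def alpha}, which for $p\geq 2$ is $t^*=1$; this gives $|\scriptE_\P\phi^+(x_1,x')|=|\scriptE_\P\phi^-(-x_1,x')|$. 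Conversely, taking any extremizer $\phi^+$ of $\scriptE_\P$ with $\|\phi^+\|_p=2^{-1/p}$ and defining $\phi^-(\xi):=\overline{\phi^+(-\xi)}$ produces $|\scriptE_\P\phi^-(y_1,y')|=|\scriptE_\P\phi^+(-y_1,y')|$, saturates every inequality above, and exhibits a concentrating extremizing sequence from \eqref{E:parab limit} with any $\lambda_n\searrow 0$.

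The main obstacle I anticipate is the rigorous proof of \eqref{plan:limit}: one must control two oscillating extension integrals uniformly in $n$ and show that rapid oscillation averages in the $L^q$ norm rather than merely weakly. I expect this to be handled by truncation of $\scriptE_\P\phi^\pm$ to smooth, compactly supported functions (on which the averaging is classical), followed by a density passage using the continuity of $\scriptE_\P:L^p(\R^d)\to L^q(\R^{d+1})$ and the sphere extension bound to control the truncation errors uniformly in $n$.
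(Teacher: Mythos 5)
Your proposal is correct and follows essentially the same route as the paper: Theorem~\ref{T:scaling} supplies the dichotomy, and the $\theta$-averaged asymptotic for $\|\scriptE g_n\|_q$ together with the pointwise $\alpha_{p,q}$ bound, Minkowski, boundedness of $\scriptE_\P$, and the equality analysis is exactly the content of the paper's Lemma~\ref{L:poles gnj}, which you re-derive inline. The only cosmetic difference is that you apply Minkowski in $L^{q/p}$ directly from the definition of $\alpha_{p,q}$, whereas the paper passes through $L^{q/2}$ and then the power mean inequality $(\|\phi^+\|_p^2+\|\phi^-\|_p^2)^{1/2}\leq 2^{1/2-1/p}(\|\phi^+\|_p^p+\|\phi^-\|_p^p)^{1/p}$ — equivalent since $p\geq 2$ — and the closing ingredients (Propositions~\ref{P:op norms}, \ref{P:alpha beta}, and the reflection construction $\phi^-=\overline{\phi^+(-\cdot)}$) match the paper's treatment.
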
 

When $p<2$ the gap between $\alpha_{p,q}$ and $\beta_{p,q}$ seen in Proposition~\ref{P:alpha beta} leaves some room for improvement in the following theorem, as discussed at the end of Section~\ref{S:equal profiles}.

\begin{theorem} \label{T:op norms small p}
Let $1<p<2$ and $q=\tfrac{d+2}dp'$, and assume that the extension conjecture for the sphere holds on a neighborhood of $(p,q)$. If $ S_{p \to q} \geq \beta_{p,q} P_{p \to q}$ then  extremizers exist for the extension operator in \eqref{E:restriction/extension} and all normalized extremizing sequences possess subsequences that converge in $L^p$, modulo spatial translations of the extension. If $ S_{p \to q} =  \alpha_{p,q} P_{p \to q}$, $\phi^\pm$ is extremal for \eqref{E:rest/extn parab}, and 
$$
|\scriptE \phi^+(x_1,x')| = t |\scriptE \phi^-(-x_1,x')|, \qquad x \in \R^{1+d}, 
 $$
where $t$ is an argument of the maximum on the right hand side of \eqref{E:def alpha}, then any sequence $(f_n)$ obeying \eqref{E:parab limit} is extremizing.
\end{theorem}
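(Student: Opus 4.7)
The plan treats the two assertions separately. For the first, let $(f_n)$ be a normalized extremizing sequence and apply Theorem~\ref{T:scaling}: after passing to a subsequence, either $(f_n)$ converges modulo modulations, producing the desired extremizer, or $(f_n)$ antipodally concentrates with profiles $\phi^\pm$ and scale $\lambda_n\searrow 0$. The task is to rule out the second alternative.

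In the concentration case, a Taylor expansion of the phase at each pole identifies $\scriptE f_n(x)$ asymptotically with
$\lambda_n^{d/p'}\bigl[e^{ix_1}\scriptE_\P\phi^+(-\lambda_n^2 x_1,\lambda_n x') + e^{-ix_1}\scriptE_\P\phi^-(\lambda_n^2 x_1,\lambda_n x')\bigr]$. Rescaling by $y_1=-\lambda_n^2 x_1$, $y'=\lambda_n x'$, the relative phase between the two pieces becomes $e^{2iy_1/\lambda_n^2}$, and Weyl-type equidistribution (justified by approximating $\scriptE_\P \phi^\pm$ by simple functions on a dyadic grid) yields
\[
\|\scriptE f_n\|_q^q \longrightarrow \int_{\R^{d+1}}\frac1{2\pi}\int_0^{2\pi}\bigl|U(y)+e^{i\theta}V(y)\bigr|^q\,d\theta\,dy,
\]
where $U(y):=|\scriptE_\P\phi^+(y)|$ and $V(y):=|\scriptE_\P\phi^-(-y_1,y')|$. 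The definition of $\alpha_{p,q}$, applied pointwise with $t(y):=\min(U,V)/\max(U,V)\in[0,1]$, bounds the inner integral by $\alpha_{p,q}^q(U^p+V^p)^{q/p}$. Minkowski's inequality for the exponent $q/p\geq 1$, followed by the parabolic extension bound $\|U\|_q\leq P_{p\to q}\|\phi^+\|_p$ (and likewise for $V$), produces
\[
\|\scriptE f_n\|_q^q \leq \alpha_{p,q}^q P_{p\to q}^q\bigl(\|\phi^+\|_p^p+\|\phi^-\|_p^p\bigr)^{q/p}+o(1) = \alpha_{p,q}^q P_{p\to q}^q\|f_n\|_p^q+o(1),
\]
using $\|f_n\|_p^p\to\|\phi^+\|_p^p+\|\phi^-\|_p^p$ from \eqref{E:parab limit}. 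Since $(f_n)$ is extremizing, this forces $S_{p\to q}\leq\alpha_{p,q}P_{p\to q}$, contradicting the hypothesis $S_{p\to q}\geq\beta_{p,q}P_{p\to q}$ in view of the strict inequality $\alpha_{p,q}<\beta_{p,q}$ supplied by Proposition~\ref{P:alpha beta} for $p<2$. Concentration is therefore impossible and the first assertion follows.

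For the second assertion, the same asymptotic is run in reverse for the specific profiles provided. The hypothesis $U=tV$ pointwise, with $t$ the argmax in \eqref{E:def alpha}, makes the $\alpha_{p,q}$-bound an equality; extremality of $\phi^\pm$ together with the consequent identity $\|\phi^+\|_p=t\|\phi^-\|_p$ makes the Minkowski and parabolic-extension steps sharp as well. The chain of estimates above thus collapses to $\|\scriptE f_n\|_q/\|f_n\|_p \to \alpha_{p,q}P_{p\to q}$, which equals $S_{p\to q}$ by hypothesis, so $(f_n)$ is extremizing.

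The principal technical obstacle is the rigorous justification of the phase-averaging limit, since $\psi\mapsto|a+e^{i\psi}b|^q$ admits no clean Fourier expansion for general $q$. A density reduction placing $\scriptE_\P\phi^\pm$ in a nice class (e.g.\ continuous, compactly supported), on which the oscillatory integrand can be handled cellwise by standard equidistribution, together with a uniform tail estimate in $L^q$, resolves this. The overall architecture mirrors the $p=2$ treatment of Frank--Lieb--Sabin \cite{FLS}; the new ingredient for $p<2$ is the generalized Minkowski step, which substitutes for the $L^2$ orthogonality used when $p=2$ and is responsible for the appearance of $\alpha_{p,q}$ (rather than $\beta_{p,q}$) as the sharp constant.
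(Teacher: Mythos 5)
Your argument is correct, and for the part ruling out concentration it takes a genuinely different---and in fact sharper---route than the paper's. The paper gets there via Lemma~\ref{L:poles gnj}: after the phase-averaging step (Lemma~6.1 of \cite{FLS}) it applies the $L^2$-flavored pointwise estimate $\tfrac{1}{2\pi}\int_0^{2\pi}|a + e^{i\theta}b|^q\,d\theta \leq \beta_{2,q}^q(a^2+b^2)^{q/2}$ together with $L^{q/2}$-Minkowski and then the lossy comparison $(\|\phi^+\|_p^2+\|\phi^-\|_p^2)^{1/2}\leq(\|\phi^+\|_p^p+\|\phi^-\|_p^p)^{1/p}$, strict unless one of $\phi^\pm$ vanishes; it is that strictness that excludes concentration when $S_{p\to q}\geq\beta_{p,q}P_{p\to q}$ for $p<2$. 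You instead use the definition of $\alpha_{p,q}$ directly as a pointwise bound $\tfrac{1}{2\pi}\int_0^{2\pi}|a+e^{i\theta}b|^q\,d\theta\leq\alpha_{p,q}^q(a^p+b^p)^{q/p}$ with the varying ratio $t(y)=\min(U,V)/\max(U,V)$, and then apply Minkowski at exponent $q/p$. Each step is valid, and it yields $\lim\|\scriptE g_n\|_q/\|g_n\|_p\leq\alpha_{p,q}P_{p\to q}$ for every antipodally concentrating pair, so Proposition~\ref{P:alpha beta} then rules out concentration. The treatment of the sufficiency direction matches the paper's in substance (the paper refers to the computation in the proof of Proposition~\ref{P:op norms}; your equality-tracing arrives at the same conclusion).

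Two remarks worth recording. First, your chain actually buys more than the stated theorem: combined with the lower bound from Proposition~\ref{P:op norms}, it gives $\bar S_{p\to q}=\alpha_{p,q}P_{p\to q}$ exactly, which the paper explicitly leaves open in the aside at the end of Section~\ref{S:equal profiles}, and it would let you weaken the hypothesis $S_{p\to q}\geq\beta_{p,q}P_{p\to q}$ to $S_{p\to q}\geq\alpha_{p,q}P_{p\to q}$. Either flag this strengthening deliberately or double-check it, since a quietly stronger conclusion than the literature's usually deserves scrutiny. Second, the Taylor-expansion and equidistribution steps you sketch are exactly Lemma~\ref{L:sphere to parab} (equivalently Theorem~\ref{T:large space}(i)) and Lemma~6.1 of \cite{FLS}; citing them is cleaner and safer than re-deriving an ad hoc Weyl-type argument for $|a+e^{i\psi}b|^q$ with non-even $q$, though the approach you describe can be made rigorous.
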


Inequality \eqref{E:op norms} is known to hold with strict inequality for the case $p=2$ in dimensions $d=1,2$, and is conjectured to be a strict inequality for $p=2$ in all dimensions.  The former result gives us the following corollary, which modestly extends the range of $p$ for which extremizers were previously known to exist for the $L^p(\S^d) \to L^{\frac{d+2}d p'}(\R^{1+d})$ extension problem.  

\begin{corollary} \label{C:d=1,2}
In dimensions $d=1,2$, for $|p-2|$ sufficiently small and $q=\tfrac{d+2}d p'$, extremizers exist for the extension operator in \eqref{E:restriction/extension}, and extremizing sequences are precompact modulo symmetries.  
\end{corollary}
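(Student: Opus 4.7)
The plan is to deduce Corollary~\ref{C:d=1,2} from Theorems~\ref{T:op norms big p} and~\ref{T:op norms small p} via a continuity argument anchored at the base point $p=2$, where the strict version of \eqref{E:op norms} is already available in dimensions $d=1,2$.

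Set $q_0:=\tfrac{2(d+2)}{d}$, the value of $q(p):=\tfrac{d+2}{d}p'$ at $p=2$. By the sentence preceding the corollary, in dimensions $d=1,2$ one has $S_{2\to q_0}>\alpha_{2,q_0}P_{2\to q_0}$, and Proposition~\ref{P:alpha beta} gives $\alpha_{2,q_0}=\beta_{2,q_0}$. Hence the strict inequality
\[
S_{2\to q_0}>\beta_{2,q_0}P_{2\to q_0}
\]
holds at the base point. The goal is then to propagate this strict inequality to a two-sided neighborhood of $p=2$ along the scaling line, and to apply the appropriate existence theorem on each side.

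The main work is to establish continuity of $p\mapsto S_{p\to q(p)}-\beta_{p,q(p)}P_{p\to q(p)}$ at $p=2$. Continuity of $\beta_{p,q(p)}$ is immediate from its formula: although $r:=\max\{p,2\}$ produces a corner, one has $r'=2$ from both sides at $p=2$, so $2^{1/r'}=\sqrt{2}$ is continuous there, and the Gamma factor is continuous through the continuous dependence of $q(p)$ on $p$. Continuity of the operator norms $(p,q)\mapsto S_{p\to q}$ and $(p,q)\mapsto P_{p\to q}$ on the interior of their region of boundedness is standard: upper semicontinuity follows from Riesz--Thorin complex interpolation, while lower semicontinuity follows from the fact that each of these norms is a supremum over a fixed class of Schwartz test functions of ratios $\|\scriptE f\|_{L^q}/\|f\|_{L^p}$, which are continuous in $(p,q)$ for each fixed nonzero $f$. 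The extension conjecture is known on a full neighborhood of $(2,q_0)$ in $d=1,2$ (this is the substantive input behind the dimensional restriction), and $P_{p\to q(p)}$ is finite near $p=2$ by Stein--Tomas/Strichartz, so we are indeed in the interior on both sides.

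Combining these inputs, the strict inequality $S_{p\to q(p)}>\beta_{p,q(p)}P_{p\to q(p)}$ persists on some open interval $(2-\eta,2+\eta)$. For $p\in[2,2+\eta)$, Theorem~\ref{T:op norms big p} applies directly, yielding existence of extremizers and precompactness of normalized extremizing sequences modulo modulation. For $p\in(2-\eta,2)$, the strict inequality implies in particular the hypothesis $S_{p\to q}\geq\beta_{p,q}P_{p\to q}$ of Theorem~\ref{T:op norms small p}, which gives the same conclusion modulo spatial translation of the extension (equivalently, modulation of $f$). The only substantive obstacle is the continuity of $S_{p\to q(p)}$ at $p=2$, which is why the assumption that the extension conjecture holds on a neighborhood is needed; in $d=1,2$ this is available, and the rest of the argument is bookkeeping.
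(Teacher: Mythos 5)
Your proposal is correct and is the argument the paper has in mind: anchor at $p=2$ via the Frank--Lieb--Sabin strict inequality (combined with Proposition~\ref{P:alpha beta} to replace $\alpha$ by $\beta$ there), propagate the strict inequality $S_{p\to q(p)}>\beta_{p,q(p)}P_{p\to q(p)}$ to a neighborhood of $p=2$ by continuity, and then invoke Theorem~\ref{T:op norms big p} for $p\in[2,2+\eta)$ and Theorem~\ref{T:op norms small p} for $p\in(2-\eta,2)$. The split between the two theorems across $p=2$ is handled correctly, as is the observation that $r'$ is continuous at $p=2$.

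Two small points of imprecision worth tightening. First, the phrase ``interior of their region of boundedness'' is not literally accurate: the scaling line $q=\tfrac{d+2}{d}p'$ is the \emph{boundary} of the two-dimensional boundedness region for the sphere, and is the entire (one-dimensional) region for the paraboloid, so the two-dimensional Riesz--Thorin continuity statement does not apply directly at $(2,q_0)$. What one should say is that the scaling line is a line segment in $(\tfrac1p,\tfrac1q)$-coordinates, hence closed under complex interpolation, so $p\mapsto\log S_{p\to q(p)}$ and $p\mapsto\log P_{p\to q(p)}$ are convex on the interval of $p$ for which finiteness is known; convexity plus finiteness on an open interval then gives continuity there, and the lower semicontinuity from fixed test functions is as you say. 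Second, ``$P_{p\to q(p)}$ is finite near $p=2$ by Stein--Tomas/Strichartz'' only gives finiteness \emph{at} $p=2$; finiteness for nearby $p$ on the scaling line requires the (known, in $d=1,2$) restriction estimates for the paraboloid off the $L^2$ point, or more simply follows from the assumed spherical extension bounds via Lemma~\ref{L:sphere to parab} as the paper itself notes. Neither point is a gap in the logic, just in the citations.
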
 

If true, the conjecture that the extremizers of the Stein--Tomas inequality for the paraboloid are Gaussians in all dimensions would imply that Corollary~\ref{C:d=1,2} holds in all dimensions \cite{FLS}.  

The question of what are these extremizers is, of course, extremely interesting, but it is beyond the scope of this article.  In \cite{ChristQuilodran}, Christ--Quilodr\'an proved that Gaussian functions are not extremal for \eqref{E:rest/extn parab} (unless, possibly, $p=2$), by proving that Gaussians do not satisfy the corresponding Euler--Lagrange equation unless $p=2$.  In the case of the sphere, however,  symmetry makes it elementary to verify that constants do satisfy the analogous Euler--Lagrange equations for all $(p,q)$, as was noted in \cite{ChristQuilodran}, but this is insufficient to verify that constants are extremizers.   

When $p=2$, Theorems~\ref{T:scaling} and \ref{T:op norms big p}, Proposition~\ref{P:op norms}, and Corollary~\ref{C:d=1,2} are due to Frank--Lieb--Sabin, \cite{FLS}.

As can be seen from the comparison with prior results, the main advantage of our approach is that it allows us to consider restriction inequalities with $p \neq 2$, for which the loss of the Hilbert space structure and Plancherel substantially reduces our available tools \cite{OpialBulletin67}.  We achieve our results by adapting the approach laid out in \cite{BSparab}, wherein it was proved that all valid, nonendpoint parabolic extension estimates possess extremizers and have precompact (modulo symmetries) extremizing sequences.  The major difference between the spherical case and the parabolic one is the defect in compactness due to the lack of a scaling symmetry.  We instead treat scaling as an almost-symmetry, analogously with prior works such as \cite{FLS, KSV}.  Relative to \cite{KSV}, the existence of distinct points on the sphere with parallel normal vectors presents an additional complication, which we address by adapting the approach of \cite{FLS}. 

\begin{figure}[htbp]
  \centering

  \begin{tikzpicture} [scale = 5]
    \fill[black!5] (0, 0) rectangle (1, 1);

\fill[green!30] (0, 0) -- (0,.375) -- (0.375, 0.375)--(1,0)-- cycle;

    \fill[green!60] (.375,.375) --  (1, 0) -- (0, 0) -- cycle;  

    \draw[dotted,thick] (.5, .3) -- (0.5, 0) node[below] {$\frac12$};
    \fill[red] (0.5, 0.3) circle [radius = 0.01];
 \draw[red] (0.375, 0.375) circle [radius = 0.01]; 
        
    \draw[->] (0, 0) -- (1.05, 0) node[right] {$\frac1p$};
    \draw[->] (0, 0) -- (0, 1.05) node[above] {$\frac1q$};
    \draw (0, 1) node[left] {$1$};
    \draw[ dotted] (1, 1) -- (0, 0) node[below left] {$0$}; 
    \draw (1, 0) node[below] {$1$};
    \draw[-] (.375, .375) -- (1, 0);  
    \draw[dotted] (.375,.375) -- (0,.375);
  \end{tikzpicture}
  
  \caption{The extension operator is conjectured to be bounded in the green quadrilateral.  We consider the subset of the darker green triangle on which the adjoint restriction conjecture holds, including the parabolic scaling line on the right, but excluding the diagonal $p=q$ on the left.  We have indicated the $p=2$ case, which featured prominently in prior work, with a dotted line. } 
  \label{fig:restriction}
\end{figure}
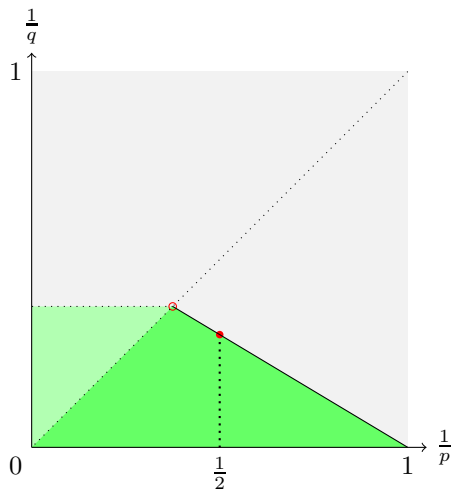

\subsection*{Outline of paper}
In the next section, we will give an in-depth overview of some of the recent history of related questions, placing our work in context.  Our strongest result, from which the others all follow, is an $L^p$-profile decomposition for bounded sequences on the sphere with nonnegligible extensions.  This result is somewhat complex, and will occupy three theorems in Section~\ref{S:profile statements}.  The first of these three results gives a frequency decomposition.  Roughly, if $\{f_n\}$ is bounded in $L^p(\S^d)$ and $\{\scriptE f_n\}$ does not tend to zero, then (along a subsequence) each $f_n$ decomposes as a finite sum of pieces with good frequency localization properties, plus a small error; this result is proved in Section~\ref{S:freq decomp}, with bilinear restriction as a primary tool.  Though the summands arising in the first decomposition are bounded by sequences that are precompact in $L^p(\S^d)$ (modulo scaling and rotation), they are not themselves precompact, as their extensions may not be well-localized in space.  The second profile decomposition establishes good spatial localization for (nearly) pointwise bounded sequences, and is proved in Section~\ref{S:scale 1 profiles}.  The third and final profile decomposition establishes good spatial localization (after rescaling) for concentrating sequences, and is proved in Section~\ref{S:concentrating profiles}.  In Section~\ref{S:off scaling}, we prove Theorem~\ref{T:off scaling}, that extremizers exist for exponent pairs $p<q$ lying off of the parabolic scaling line $q=\tfrac{d+2}dp'$.  Section~\ref{S:equal profiles} provides an analysis of the behavior of antipodally concentrating profiles, which is then applied in Section~\ref{S:scaling line} to deduce properties of concentrating extremizing sequences (supposing that they exist).  

\subsection*{Notation}  We will use throughout the standard notation $A\lesssim B$ to mean $A \leq CB$ for $C$ an \textit{admissible} constant that will be allowed to change from line to line.  Admissible constants may depend on the dimension $d$, the exponents $p,q$, and (in cases where our results are conditional) on bounds for the spherical restriction/extension operators for exponents in a small neighborhood of $p,q$. Occasionally we will decorate the `$\lesssim$' symbol with subscripts to indicate additional dependencies.  

Though we will use Lebesgue norms on three different spaces (the sphere, $\R^{d+1}$, and $\R^d$); when the meaning is clear and space is limited, we will abbreviate these norms by using only the exponent as a subscript.

\subsection*{Acknowledgements}  The authors are grateful to Arthur DressenWall for sharing some of his findings regarding the quantity $\alpha_{p \to q}$.   The second author was supported in part by NSF DMS-1653264 while working on this project.  

\section{Prior results}

An excellent survey on sharp Fourier restriction results is given in \cite{FoschiSilva}. As this is an active area, we highlight a few more recent results as well as the prior results most relevant to our analysis. For the sake of completeness, we will also state and prove an elementary result that we have not been able to find written elsewhere.

Existence results for extremizers of $L^p$-$L^q$ inequalities for the sphere have largely\footnote{A partial exception is \cite{CSSsphere} in which $p=2$ and $L^q(\R^{d+1})$ is replaced by the mixed norm space $L_{rad}^qL_{ang}^2(\R^{d+1})$, and the analysis is based upon a careful study of Bessel functions.} involved one or both of the hypotheses that $p=2$ or $q$ is an even integer. These cases are special because of the Hilbert space structure available in $p=2$, on the one hand, and an explicit formula for the $L^{2k}$ norm of spherical extensions as the $L^2$ norm of a $k$-fold convolution, on the other. In addition to the the previously discussed results of \cite{FLS} when $p=2$ and $q=\tfrac{d+2}dp'$, existence of extremizers has been established in the cases that $p=2$, $q=4$, $d=2$ \cite{ChristShao};  $p=2$, $q>\tfrac{d+2}dp'$,  $d\geq 1$ 
\cite{FVV2011};  $p=2$, $q=6$,  $d=1$ \cite{Shao16}; $p\geq 2$, $q=4$, $d\in\{2,3,4,5,6\}$;  $p \geq 4$, $q=4$, $d\geq 7$; $p\geq q$, $q=2k$, $q\geq 6$, $d\geq 1$ (the last three results are all in \cite{CS}). We note that the $p\geq q$ condition in \cite{CS} includes $p=\infty$ and is precisely the reverse of our $q>p$ condition. 

In some of these cases extremizers are known to be modulations of constants. Namely, when $p=2$, $q=4$, and $d=2$, this result is due to \cite{Foschi}; for $p\geq 2$, $q=4$, and $d\in\{2,3,4,5,6\}$,  $p \geq 4$, $q=4$, and $d\geq 7$, $p\geq q$, $q=2k$, $q\geq 6$, and $d\geq 1$, \cite{CS}; and for $p=2$, $q=2k$, when   $d\in\{2,3,4,5,6\}$ \cite{DiogoRene21}. Stability of these results is investigated in \cite{CSN}  where they show that in $d\in\{2,3,4,5,6\}$, for $p=2$, when $L^4(\R^{d})$ is replaced by a weighted $L^4$ with a radial weight which is a small perturbation of the unweighted case, the only extremizers are constants.  

Our results build on the profile decomposition approach of \cite{FVV2011} (and, implicitly, \cite{FLS}), extending these methods to address the absence of Hilbert space structure when $p \neq 2$. The methods for this adaptation originate in the study of sharp restriction for non-compact manifolds, specifically \cite{BSparab}, which proves that all valid, nonendpoint parabolic extension estimates possess extremizers and have precompact (modulo symmetries) extremizing sequences. The ideas are further developed in \cite{BiswasStovall}, \cite{tautges1}, and \cite{tautges2}   which consider the precompactness of extremizing sequences for adjoint Fourier restriction to other non-compact manifolds. Recently, $L^2$ based concentration compactness methods have also been used to investigate convergence of extremizing sequences on the hyperbola in \cite{CSS} and \cite{CSSShyp2}.

Finally, for completeness, we prove an elementary result, which is surely known to experts, but which we haven't found in the literature.  

\begin{proposition} \label{P:p infty}
For all $1 \leq p \leq \infty$, nonzero modulated constants, $e^{ix_0 \omega}\lambda$ are maximizers of the $L^p \to L^\infty$ extension inequality.  When $p > 1$, such functions are the unique maximizers.  When $1 < p < \infty$, after possible modulation and multiplication by unimodular constants, every normalized extremizing sequence in $L^p$ converges in $L^p$ to the constant function $\lambda_p:=\sigma(\S^d)^{-\frac1p}$.  
\end{proposition}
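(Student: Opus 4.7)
The plan is to compute $S_{p\to\infty}$ by combining the triangle inequality with H\"older, read off the extremal structure by tracking equality cases, and then use uniform convexity of $L^p$ to upgrade weak compactness of extremizing sequences to strong convergence.

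For the operator norm, the pointwise bound $|\scriptE f(x)| \le \int_{\S^d}|f|\,d\sigma \le \sigma(\S^d)^{1/p'}\|f\|_p$ gives $S_{p\to\infty} \le \sigma(\S^d)^{1/p'}$, while testing against $f(\omega)=\lambda e^{-ix_0\omega}$ yields $\scriptE f(x_0)=\lambda\sigma(\S^d)$, which saturates this bound and confirms that modulated constants are maximizers. For the uniqueness claim when $p>1$, since $\sigma$ is finite, Riemann--Lebesgue places $\scriptE f$ in $C_0(\R^{d+1})$, so the supremum is attained at some $x_0\in\R^{d+1}$. A normalized extremizer must then saturate the triangle inequality at $x_0$---forcing $e^{ix_0\omega}f(\omega)=c|f(\omega)|$ a.e.\ for some unimodular $c$---and H\"older---forcing $|f|$ to be a.e.\ constant, via strict convexity of $t\mapsto t^p$ for $1<p<\infty$ (or directly for $p=\infty$). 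Combined, $f$ must be a modulated constant.

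For convergence of normalized extremizing sequences when $1<p<\infty$, I would pick $x_n\in\R^{d+1}$ and unimodular $c_n$ so that $\tilde f_n(\omega):= c_n e^{ix_n\omega}f_n(\omega)$ satisfies $\int_{\S^d} \tilde f_n\,d\sigma = |\scriptE f_n(x_n)| = \|\scriptE f_n\|_\infty \to \sigma(\S^d)^{1/p'}$, while $\|\tilde f_n\|_p=1$. A weak subsequential limit $f_*$ in $L^p$ then inherits $\int f_*\,d\sigma=\sigma(\S^d)^{1/p'}$ and $\|f_*\|_p\le 1$ by lower semicontinuity; H\"older forces $\|f_*\|_p=1$ together with equality in H\"older, which combined with real-positivity of $\int f_*$ pins down $f_* = \lambda_p$ a.e. Weak convergence paired with matching $L^p$ norms then yields strong convergence in the uniformly convex space $L^p$ (Radon--Riesz), and uniqueness of the limit $\lambda_p$ upgrades subsequential convergence to convergence of the full sequence.

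No serious obstacle arises; the one subtlety is ensuring that the supremum $\|\scriptE f\|_\infty$ is actually attained at a point of $\R^{d+1}$, which needs the finite-measure Riemann--Lebesgue lemma. The exclusion $p\in\{1,\infty\}$ in the convergence statement is natural: $L^p$ is not uniformly convex at these endpoints, and for $p=1$ uniqueness outright fails because any nonnegative (up to modulation) $L^1$ function saturates the triangle inequality.
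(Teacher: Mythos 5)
Your proof follows essentially the same route as the paper's: H\"older for the operator norm, tracking of the two equality cases for uniqueness, and weak-limit-plus-norm-identification upgraded to strong convergence by Radon--Riesz (the paper cites Theorem~2.11 of Lieb--Loss for that last step).

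The one genuine error is the appeal to a ``finite-measure Riemann--Lebesgue lemma'' to place $\scriptE f$ in $C_0(\R^{d+1})$. The Riemann--Lebesgue lemma concerns Fourier transforms of measures that are \emph{absolutely continuous} with respect to Lebesgue measure on $\R^{d+1}$; it says nothing about the singular measure $f\,d\sigma$, and indeed a generic finite singular measure (a Dirac mass, say) has a Fourier transform that does not vanish at infinity. The decay of $\scriptE f$ does hold, but the reason is the curvature of $\S^d$: stationary phase gives $\scriptE g(x)=O(|x|^{-d/2})$ for smooth $g$, and the uniform bound $\|\scriptE(f-g)\|_\infty\le\|f-g\|_{L^1(\S^d)}$ then transfers decay to all $f\in L^1(\S^d)$ by density. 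The paper leaves this attainment step implicit when it asserts the ``if and only if'' characterization of equality in $\|\scriptE f\|_\infty\le\|f\|_1$, so your recognizing that something must be said here is the right instinct---only the cited tool is wrong. Incidentally, for the convergence part of the proposition one can dispense with exact attainment by choosing $x_n$ with $|\scriptE f_n(x_n)|>\|\scriptE f_n\|_\infty-1/n$, after which your weak-limit computation goes through unchanged.
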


\begin{proof}
Sufficiency follows from H\"older's inequality,
\begin{equation} \label{E:p infty holder}
\|\scriptE f\|_{L^\infty(\R^{d+1})} \leq \|f\|_{L^1(\S^{d})} \leq \sigma(\S^d)^{\frac1{p'}}\|f\|_{L^p(\S^{d})},
\end{equation}
and for all $p$, equality holds for the modulated constants.  For necessity of the constants, we observe that for all $p$ the first inequality in \eqref{E:p infty holder} is equality if and only if $e^{i\theta}e^{ix_0\omega}f$ is nonnegative for some $\theta,x_0$, while for $p>1$, the second inequality is equality if and only if $|f|$ is constant.  

Finally, let $1<p<\infty$ and let $\{f_n\}$ be a normalized extremizing sequence in $L^p(\S^{d})$.  By modulating and multiplying the $f_n$ by unimodular constants, we may assume that $\scriptE f_n(0) = \|\scriptE f_n\|_{L^\infty(\R^{d+1})}$ for all $n$.  It suffices to prove that every subsequence of $\{f_n\}$ has a further subsequence convergent to $\lambda_p$.  Therefore, since $\{f_n\}$ was arbitrary, it suffices, by Banach--Alaoglu, to prove that $f_n \to \lambda_p$ in $L^p(\S^{d})$, under the additional hypothesis that $f_n$ converges weakly to some $f \in L^p(\S^{d})$.  

By construction and our above computation of the operator norm,
$$
\scriptE f(0) = \lim \scriptE f_n(0) = \lim \|\scriptE f_n\|_{L^\infty(\R^{d+1})} = \|\scriptE\|_{L^p(\S^d) \to L^\infty(\R^{d+1})} = \sigma(\S^d)^{\frac1{p'}}.
$$
Since $\|f\|_{L^p(\S^{d})} \leq 1$, while $\|\scriptE f\|_{L^\infty(\R^{d+1})} \geq \|\scriptE\|_{L^p \to L^\infty}$, the uniqueness portion of the proposition (already proved) implies that $f\equiv \lambda_p$.  Since $f_n \rightharpoonup \lambda_p$ and $\|f_n\|_{L^p(\S^{d})} \equiv 1 = \|\lambda_p\|_{L^p(\S^{d})}$, Theorem~2.11 of \cite{LiebLoss} implies that $f_n \to \lambda_p$ in ${L^p(\S^{d})}$.  
\end{proof}

Thus our setting introduces some key differences relative to what has come before.  Namely, as opposed to the vast majority of published results, we impose very few conditions on $(p,q)$, requiring only that $q>p$, $S_{p \to q}<\infty$, and that the extension conjecture is valid on a neighborhood of $(p,q)$.  Further, due to compactness of the sphere, we are able to consider an even wider range of exponent pairs than \cite{BSparab} (which was limited to the scaling line).  Additionally, as already observed in \cite{FLS}, the sphere lacks some simplifications available for other surfaces (e.g.\ the paraboloid or hyperboloid), since it lacks a scaling symmetry and possesses antipodal points; consideration of these features without the condition $p=2$ presents some new complications.  Finally, our results go further than those of \cite{BiswasStovall, BSparab, tautges1, tautges2} by establishing a full profile decomposition for bounded $L^p$ sequences, rather than exclusively focusing on the extremal case. 

\section{A weak $L^p$ profile decomposition} \label{S:profile statements}

In this section, we will introduce our main tool, a weak $L^p$ profile decomposition.  A profile decomposition associated to an operator $T:X \to Y$ is a means of decomposing bounded sequences in $X$ as the sum of a structured part, which has good compactness properties modulo symmetries, and a ``random'' part, which is small after an application of $T$.  The method was introduced by Lions \cite{Lions} and has found extensive application in PDE.  For Fourier extension operators, the $L^2$-based theory of  profile decompositions is comparatively well-developed, both because of the role that $L^2$-based inequalities play in the study of dispersive equations and also because more tools, namely, Plancherel and the Hilbert space structure, are available.  In particular, all of the essential ingredients for the $L^2$ profile decomposition are given in \cite{FLS}, though the full profile decomposition was never explicitly stated in that article.  

In \cite{BSparab}, a profile decomposition of extremizing, frequency localized $L^p$ sequences was used to prove that the extension operator associated to the paraboloid possesses extremizers.  Here, we give a more quantitative result, providing a decomposition of more general sequences.  Our profile decomposition is weak in the sense that it gives poor control over the remainder terms, which, despite having small extension, may blow up in $L^p$.  This blowup of the $L^p$ norm results from our use of weak limits, and does not affect the $L^2$ theory because of elementary Hilbert space manipulations.  (When we are not in a Hilbert space, subtracting a weak limit from a sequence does not necessarily decrease the limit of the norms \cite{OpialBulletin67}.)  An alternative, stronger profile decomposition for $L^p$ sequences and operators satisfying certain conditions is developed in \cite{SoliminiTintarev}; it is based on $\Delta$-limits, rather than weak limits.  A significant advantage of using $\Delta$-limits, rather than weak limits, is that the remainder terms in the $\Delta$ profile decomposition are bounded in $L^p$, in addition to having small extensions.  A disadvantage is that $\Delta$-limits do not seem to yield sufficiently sharp inequalities to control the number of profiles of an extremizing sequence and thereby prove the existence of extremizers.  (I.e., we will rely on  inequalities involving the relation `$\leq$,' rather than `$\lesssim$.')

Our results include the possible case of concentration at antipodal points.  For this reason, it is convenient to use the real projective space $\R\P^d = \S^d/\{\omega \sim -\omega\}$, whose elements we denote by $[\omega] := \{\omega,-\omega\}$, $\omega \in \S^d$.  We observe that 
$$
\dist([\omega],[\omega']) =\min\{|\omega-\omega'|, |\omega+\omega'|\}.
$$

To produce statements that are somewhat easier to parse, we have broken our profile decomposition into three parts.  We begin with a decomposition of the frequency space $\S^d$, distinguishing between the critical and subcritical regime. 

\begin{theorem}[Frequency decomposition]\label{T:freq}
Let $1<p<q=\tfrac{d+2}dp'$, and assume that the restriction conjecture for $\scriptE$ holds on a neighborhood of $(p,q)$.  
Let $\{f_n\}$ be a bounded sequence in $L^p(\S^d)$.  After passing to a subsequence, there exist $\{\lambda_n^j\}_{j,n \in \N} \subseteq (0,1]$, $\{[\omega_n^j]\}_{j,n \in \N} \subseteq \R\P^d$, and  a sequence of decompositions $f_n = \sum_{j=1}^J F_n^j + R_n^J$, $J \in \N$, such that:\\
\emph{(i)} For each $j$, either $\lambda_n^j \to 0$, or $\lambda_n^j \equiv 1$.  \\
\emph{(ii)} For $j \neq j'$, either $|\log\tfrac{\lambda_n^j}{\lambda_n^{j'}}| \to \infty$ or $\lambda_n^j \equiv \lambda_n^{j'}$, and, in the latter case, if $\lambda_n^j \to 0$,  then $(\lambda_n^j)^{-1} \dist([\omega_n^j],[\omega_n^{j'}]) \to \infty$.  \\
\emph{(iii)}  For each $n,J\in \N$, $\|f_n\|_{L^p(\S^{d})}^p = \sum_{j=1}^J\|F_n^j\|_{L^p(\S^{d})}^p + \|R_n^J\|_{L^p(\S^{d})}^p$,\\
\emph{(iv)} For all $J\in \N$, $\lim_{n \to \infty} \|\scriptE f_n\|_q^q - \sum_{j=1}^J \|\scriptE F_n^j\|_q^q - \|\scriptE R_n^J\|_q^q = 0$, \\
\emph{(v)} The remainders have small extension:  $\lim_{J \to \infty} \limsup_{n \to \infty} \|\scriptE R_n^J\|_{L^q(\R^{d+1})} = 0$\\
\emph{(vi)} The $F_n^j$ are adapted to antipodal caps of radius $\lambda_n^j$ with centers on $[\omega_n^j]$ in the sense that 
\begin{gather*}
    \lim_{M \to \infty}\limsup_{n \to \infty} \sup_{E \subseteq E_n^{j,M}} \|\scriptE F_n^j \chi_E\|_{L^q(\R^{d+1})} = 0, \qtq{where}\\
    E_n^{j,M} := \{|F_n^j|>M(\lambda_n^j)^{-d/p}\} \cup \{\dist(\omega,[\omega_n^j])>M\lambda_n^j\}.    
\end{gather*}
\end{theorem}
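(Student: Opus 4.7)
The plan is to prove Theorem~\ref{T:freq} by iterative extraction: at each stage pull out a single profile carrying a definite fraction of the remaining extension $L^q$-mass, then continue with the remainder until what is left has negligible extension. The keystone is a quantitative inverse theorem of the following form: if $f \in L^p(\S^d)$ with $\|f\|_p \leq A$ and $\|\scriptE f\|_q \geq \eta>0$, then there exist $\lambda \in (0,1]$, $[\omega] \in \R\P^d$, and an admissible constant $M=M(\eta,A)$ such that the truncation
$$
F := f \cdot \1_{\{\dist(\cdot,[\omega]) \leq M\lambda\}} \cdot \1_{\{|f| \leq M\lambda^{-d/p}\}}
$$
satisfies $\|\scriptE F\|_q \geq c(\eta,A) > 0$; by forward extension, this automatically forces $\|F\|_p \gtrsim c(\eta,A)/S_{p\to q}$.

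To produce this lemma I would run a bilinear-to-linear Whitney argument. Because the extension conjecture holds on a neighborhood of $(p,q)$, linear bounds are available at some $(p_0,q_0)$ strictly inside the conjectured region with $q_0<q$, which upgrade to a bilinear $L^{q_0/2}$ estimate for extensions from transversal cap pairs in the spirit of \cite{TVV}. Expanding $\|\scriptE f\|_q^q$ via a Whitney decomposition of $\R\P^d\times\R\P^d$ away from its diagonal (working on $\R\P^d$ rather than $\S^d$ is forced by antipodal geometry, as in \cite{FLS}) yields a sum over pairs of antipodal caps $(\tau,\tau')$ at a common scale $\lambda$ with $\dist(\tau,\tau')\sim\lambda$, each bilinearly controlled by $\|f\1_\tau\|_p\|f\1_{\tau'}\|_p$. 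Pigeonholing first in scale and then in cap location isolates a single pair carrying a definite fraction of the mass; the pointwise truncation at height $M\lambda^{-d/p}$ is accommodated by interpolating against the trivial bound $\|\scriptE(f\1_\tau)\|_\infty \lesssim \lambda^{d/p'}\|f\1_\tau\|_p$.

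With the inverse theorem in hand, the iteration proceeds in the standard way: set $R_n^0:=f_n$, and while $\limsup_n \|\scriptE R_n^{J-1}\|_q>0$, apply the lemma (along a subsequence) to extract $F_n^J$ as a truncation of $R_n^{J-1}$, then set $R_n^J:=R_n^{J-1}-F_n^J$. Because the $F_n^J$ are hard truncations supported on disjoint subsets of $\S^d$, property (iii) holds as exact equality---this is the crucial point that avoids the $p=2$ Hilbert space structure, since taking weak limits would destroy $L^p$-additivity when $p\neq2$. A diagonal subsequence argument then enforces (i) (each $\lambda_n^j$ either stabilizes at $1$ or tends to $0$) and (ii) (each ratio $\lambda_n^j/\lambda_n^{j'}$ and each normalized separation $(\lambda_n^j)^{-1}\dist([\omega_n^j],[\omega_n^{j'}])$ converges in $[0,\infty]$); if two extracted profiles would end up at the same scale with bounded normalized separation, they are merged into one by enlarging $M$. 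Property (iv) follows by applying bilinear restriction to pairs $(F_n^j,F_n^{j'})$: either the scales diverge or the caps separate at the common scale, and in either case the bilinear $L^{q_0/2}$ estimate sends the cross terms in the expansion of $\|\sum_{j=1}^J\scriptE F_n^j+\scriptE R_n^J\|_q^q$ to zero, whence asymptotic additivity. Finally, (v) is a consequence of (iii) and the inverse theorem: each extraction consumes $L^p$-mass at a rate bounded below by a function of $\eta_J:=\limsup_n\|\scriptE R_n^{J-1}\|_q$, so the summability of $\sum_j\|F_n^j\|_p^p$ forces $\eta_J\to0$.

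The main obstacle is the inverse theorem itself. In the $p=2$ case this is essentially a consequence of Plancherel and bilinear $L^2$ theory, and the profile may be taken as a weak limit; for $p\neq 2$ both of these crutches are unavailable. The resolution is twofold: first, to select the profile as a sharp truncation of $f_n$ rather than a weak limit, so that $L^p$-additivity is built in automatically; and second, to use bilinear restriction at an exponent \emph{strictly better} than $(p,q)$---which is exactly what the neighborhood hypothesis supplies---to absorb the losses incurred by pigeonholing and truncating to a single cap. This is the same philosophy used for the paraboloid in \cite{BSparab}; the new complication is the antipodal geometry of $\S^d$, which is handled by working uniformly with profile centers in $\R\P^d$, as in \cite{FLS}.
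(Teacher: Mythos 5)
Your overall strategy---extract profiles as hard truncations of $f_n$ (so that $L^p$-additivity (iii) is exact), prove an inverse estimate via Lemma~\ref{L:bilin}-style bilinear restriction, iterate, and then diagonalize to arrange (i) and (ii)---is the right philosophy, and it is close in spirit to the paper. Indeed, the paper's Lemma~\ref{L:scalable} is essentially your inverse statement: it says that if $\|\scriptE f\|_q$ is nonnegligible relative to $\|f\|_p$, then some single-cap, single-height truncation $f_{\tau,n}$ carries nonnegligible $L^p$ mass. (Note, however, that what one gets directly is $\|F\|_p \geq c(\eta,A)$, not $\|\scriptE F\|_q \geq c(\eta,A)$; the latter is stronger than anything used or proved, and you should not rely on it. For (v), the $L^p$ lower bound is what you actually use.)

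Where your plan and the paper diverge, and where I think yours has a genuine gap, is the architecture of the iteration and its consequence for (iv). The paper first greedily extracts a countable family of \emph{chips} $h_n^j$, each of the form $r_n^{j-1}\chi_{\tau_n^j}\chi_{\{|f_n|\le 2^j|\tau_n^j|^{-1/p}\}}$ with \emph{monotonically growing} height thresholds, and only afterward groups the chips into clumps $F_n^i$ according to asymptotic comparability of scale and position. The point of this two-layer structure is that the truncated remainder $R_n^{I,\leq J}$ is again a \emph{finite sum of chips}, and the greedy choice (Lemma~\ref{L:freq remainders 0}) gives a uniform bound on \emph{any} truncation of the chip remainder $r_n^J$, uniformly over all caps and heights. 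This is exactly what permits replacing $R_n^I$ by $R_n^{I,\leq J}$ before expanding the $L^q$ norm, so that the entire expansion lives at the chip level and all cross terms are of the bilinear form $\|\scriptE h_n^j\,\scriptE h_n^{j'}\|_{q/2}$ with $j,j'$ in different clumps.

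In your scheme, $R_n^J := f_n - \sum_{j\le J} F_n^j$ is not a finite sum of extracted profiles; it contains whatever part of $f_n$ lies above the extracted height thresholds or outside the extracted caps, at scales and locations that have not yet been isolated. When you expand $\|\sum_j \scriptE F_n^j + \scriptE R_n^J\|_q^q$, the cross terms $\int |\scriptE F_n^j||\scriptE R_n^J|^{q-1}$ and $\int |\scriptE R_n^J||\scriptE F_n^j|^{q-1}$ do not obviously vanish: bilinear restriction requires both factors to be frequency-localized to separated (or scale-separated) caps, and $R_n^J$ has no such localization. You cannot dispose of these terms via (v) either, since (v) only controls $\limsup_n\|\scriptE R_n^J\|_q$ after $J\to\infty$, not at fixed $J$. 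The "merge profiles by enlarging $M$" step does not repair this, and is itself underspecified: merging after the fact can change the already-defined remainders, and a clump may absorb infinitely many extractions at growing $M$, which is precisely the situation the paper's $\lim_{M\to\infty}$ formulation of (vi) and its growing-threshold chips are designed to handle. I'd suggest restructuring exactly as the paper does: first exhaust $f_n$ into chips at all dyadic heights and caps with a greedy $L^p$-maximal selection (this is what makes the chip remainder small \emph{uniformly over truncations}), and only then form the $F_n^j$ by clumping.
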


We observe that without (vi), the result follows trivially by taking $F_n^1 \equiv f_n$.  

In the subcritical regime $q > \tfrac{d+2}d p'$, the frequency ``decomposition'' is much simpler (and an elementary consequence of H\"older's inequality, as we will see).  

\begin{proposition} \label{P:subcrit freq}
Let $q>\max\{\tfrac{d+2}dp',\tfrac{2(d+1)}d\}$, and assume that the restriction conjecture for $\scriptE$ holds on a neighborhood of $(p,q)$.  
Let $\{f_n\}$ be a bounded sequence in $L^p(\S^d)$.  Then if $E_n^M:=\{|f_n|>M\}$, then
$$
\lim_{M \to \infty} \sup_n \sup_{E \subseteq E_n^M} \|\scriptE f_n\chi_E\|_{L^q(\R^{d+1})} = 0.  
$$
\end{proposition}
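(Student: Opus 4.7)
The plan is to exploit strict subcriticality $q > \tfrac{d+2}d p'$ to create room for a standard truncation argument in $L^p$.

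First, since the conditions $q > \tfrac{d+2}d p'$ and $q > \tfrac{2(d+1)}d$ are open in $p$, we may choose some $\tilde p < p$ (arbitrarily close to $p$) for which both $q > \tfrac{d+2}d \tilde p'$ and $q > \tfrac{2(d+1)}d$ still hold. The hypothesis that the extension conjecture holds on a neighborhood of $(p,q)$ then yields the boundedness $\scriptE : L^{\tilde p}(\S^d) \to L^q(\R^{d+1})$, that is, $S_{\tilde p \to q} < \infty$.

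Second, I would carry out a Chebyshev-style estimate at the exponent $\tilde p$. Since $\tilde p - p < 0$, on $E_n^M = \{|f_n|>M\}$ we have the pointwise bound $|f_n|^{\tilde p} = |f_n|^{\tilde p - p}|f_n|^p \leq M^{\tilde p - p}|f_n|^p$. Integrating over any $E \subseteq E_n^M$ gives
$$
\|f_n\chi_E\|_{L^{\tilde p}(\S^d)}^{\tilde p} \leq M^{\tilde p - p}\|f_n\|_{L^p(\S^d)}^p,
$$
i.e., $\|f_n\chi_E\|_{\tilde p} \leq M^{1-p/\tilde p}\|f_n\|_{p}^{p/\tilde p}$. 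Feeding this into the extension estimate yields
$$
\|\scriptE(f_n\chi_E)\|_{L^q(\R^{d+1})} \leq S_{\tilde p\to q}\,M^{1-p/\tilde p}\bigl(\sup_n \|f_n\|_{p}\bigr)^{p/\tilde p},
$$
with the exponent $1 - p/\tilde p < 0$. Since the right hand side is independent of $n$ and $E$ and tends to $0$ as $M\to\infty$, the claimed uniform smallness follows.

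There is no serious obstacle here: subcriticality provides genuine room, and no tools beyond H\"older's inequality (in the Chebyshev-type form above) and the subcritical extension bound are required. The only point to verify with care is that the hypothesis of the conjecture on an open neighborhood of $(p,q)$ actually furnishes a strictly smaller exponent $\tilde p < p$ at which $\scriptE$ remains bounded into $L^q$, which is immediate from the openness of the two strict inequalities imposed on $q$.
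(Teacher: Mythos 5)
Your proposal is correct and matches the paper's proof essentially verbatim: both pick a slightly smaller exponent $\tilde p < p$ (using the open-neighborhood hypothesis to obtain $S_{\tilde p \to q} < \infty$) and then apply the same Chebyshev-type bound $\|f_n\chi_E\|_{\tilde p} \leq M^{1-p/\tilde p}\|f_n\|_p^{p/\tilde p}$ on the superlevel set before feeding it into the extension estimate.
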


Our next two results provide a spatial decomposition of functions obeying the frequency localization property described in part (vi) of Theorem~\ref{T:freq} (and the conclusion of Proposition~\ref{P:subcrit freq}), in the cases of nonconcentration and antipodal concentration, respectively.  In both, we will use the notation
$$
\tilde p:=\max\{p,p'\}.
$$

\begin{theorem}[Scale 1 spatial decomposition]\label{T:space1}
Let $1<p<q<\infty$ with $q\geq\tfrac{d+2}dp'$ and $q>\tfrac{2(d+1)}d$, and assume that the restriction conjecture for $\scriptE$ holds on a neighborhood of $(p,q)$.  
Let $\{f_n\}$ be a bounded sequence in $L^p(\S^d)$ satisfying the condition
$$
\lim_{M \to \infty} \limsup_{n \to \infty} \|\scriptE f_n\chi_{\{|f_n|>M\}}\|_{L^q(\R^{d+1})} = 0.
$$
After passing to a subsequence, there exist $\{x_n^j\}_{j,n \in \N} \subseteq \R^{d+1}$ obeying
\begin{equation} \label{E:xnj-xnj' scale 1}
\lim_{n \to \infty}|x_n^j-x_n^{j'}| = \infty, \qtq{for} j \neq j',
\end{equation}
and weak limits, $\phi^j = \wklim e^{-ix_n^j\omega}f_n$, such that for $J \in \N$,
\begin{equation} \label{E:Lp orthog 1}
\begin{gathered}
\bigl(\sum_j \|\phi^j\|_{L^p(\S^d)}^{\tilde p}\bigr)^{\frac1{\tilde p}} \leq \liminf\|f_n\|_{L^p(\S^d)}, \\ \limsup\|\sum_{j=1}^{J} e^{ix_n^j\omega}\phi^j\|_{L^p(\S^d)} \leq \bigl(\sum_{j=1}^J \|\phi^j\|_{L^p(\S^d)}^{\tilde p'}\bigr)^{\frac1{\tilde p'}}, 
\end{gathered}
\end{equation}  
and the remainders $r_n^J:=f_n-\sum_{j=1}^J e^{ix_n^j\omega}\phi^j$ satisfy
\begin{gather}\label{E:Lq orthog 1}
\lim_{n \to \infty} \|\scriptE f_n\|_{L^q(\R^{d+1})}^q - \sum_{j=1}^J \|\scriptE \phi^j\|_{L^q(\R^{d+1})}^q - \|\scriptE r_n^J\|_{L^q(\R^{d+1})}^q = 0, \quad J \in \N,\\ \label{E:ErnJ 0 1}
\lim_{J \to \infty} \limsup_{n \to \infty}\|\scriptE r_n^J\|_{L^q(\R^{d+1})} = 0.
\end{gather}
\end{theorem}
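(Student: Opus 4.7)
I would build the decomposition by iterating an inverse restriction inequality: at each stage, either the current remainder's extension is already small (and we stop), or we extract one more profile as a weak $L^p$-limit of a spacetime-translated remainder. The three key ingredients are (a) an inverse inequality converting nondecay of the extension into a detectable profile, (b) a center-separation argument so the profiles are asymptotically orthogonal, and (c) Brezis--Lieb-type orthogonalities in $L^p$ and $L^q$. The main obstacle will be closing the loop in the termination step, where the non-Hilbert-space character of $L^p$ manifests.

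\textbf{Inverse inequality.} The key lemma: if $\{g_n\}$ is bounded in $L^p(\S^d)$, satisfies the scale-1 localization of the hypothesis, and $\limsup_n\|\scriptE g_n\|_q\geq\delta>0$, then along a subsequence there exist $x_n\in\R^{d+1}$ and $\phi\in L^p(\S^d)$ with $e^{-ix_n\omega}g_n\rightharpoonup\phi$ and $\|\phi\|_p\geq c(\delta)>0$. To prove this, truncate $g_n=g_n^{\leq M}+g_n^{>M}$ at a threshold $M$ for which $\limsup_n\|\scriptE g_n^{>M}\|_q<\delta/2$. The pointwise bound $|g_n^{\leq M}|\leq M$ and H\"older on $\S^d$ give $\|g_n^{\leq M}\|_{p_0}\lesssim M^{1-p/p_0}$ for any $p_0>p$. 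Choosing $(p_0,q_0)$ in the conjecturally valid range near $(p,q)$ with $q_0<q$ (available by the neighborhood hypothesis) and applying the interpolation
\[
\|\scriptE g_n^{\leq M}\|_q^q\leq\|\scriptE g_n^{\leq M}\|_\infty^{q-q_0}\|\scriptE g_n^{\leq M}\|_{q_0}^{q_0}
\]
together with $\|\scriptE g_n^{\leq M}\|_{q_0}\leq S_{p_0\to q_0}\|g_n^{\leq M}\|_{p_0}$ forces $\|\scriptE g_n^{\leq M}\|_\infty\gtrsim c(\delta,M)$. Picking $y_n$ nearly attaining this sup and setting $x_n:=-y_n$, a weak limit yields $\phi$ with $|\int\phi\,d\sigma|=\lim|\scriptE g_n(y_n)|\gtrsim 1$ (the stray contribution $\|\scriptE g_n^{>M}\|_\infty\lesssim\|g_n^{>M}\|_1\lesssim M^{-(p-1)}$ is absorbed by taking $M$ large); H\"older on $\S^d$ then gives $\|\phi\|_p\gtrsim 1$.

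\textbf{Iteration, separation, and orthogonalities.} Setting $r_n^0:=f_n$ and $r_n^J:=r_n^{J-1}-e^{ix_n^J\omega}\phi^J$ with $\phi^J=\wklim e^{-ix_n^J\omega}r_n^{J-1}$ extracted from $r_n^{J-1}$ by the inverse inequality, the scale-1 localization transfers to each $r_n^J$ (with a $J$-dependent threshold) using absolute continuity of $\int_A|\phi^k|^p\to 0$ as $\sigma(A)\to 0$ and boundedness of $\scriptE:L^p\to L^q$. Divergence $|x_n^J-x_n^{J'}|\to\infty$ is proved by induction on $J$: if $x_n^J-x_n^{J'}\to y_\infty$ along a subsequence, then since by the inductive hypothesis $|x_n^k-x_n^{J'}|\to\infty$ for all $k\neq J'$ with $k\leq J-1$, Riemann--Lebesgue on $\S^d$ yields $e^{-ix_n^{J'}\omega}r_n^{J-1}\rightharpoonup 0$; composing with the uniformly convergent modulation $e^{-i(x_n^J-x_n^{J'})\omega}\to e^{-iy_\infty\omega}$ then forces $\phi^J=0$, a contradiction. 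The $L^q$-orthogonality \eqref{E:Lq orthog 1} is Brezis--Lieb applied to $\scriptE(e^{-ix_n^J\omega}r_n^{J-1})\to\scriptE\phi^J$ at each stage, telescoped. The $L^p$-inequalities \eqref{E:Lp orthog 1} follow from the center separation together with Clarkson/Hanner-type inequalities: directly for $p\geq 2$ (where $\tilde p=p$), by duality for $1<p<2$ (where $\tilde p=p'$).

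\textbf{Termination.} Establishing \eqref{E:ErnJ 0 1} is the hardest step. From \eqref{E:Lp orthog 1} we get $\sum_J\|\phi^J\|_p^{\tilde p}<\infty$ so $\|\phi^J\|_p\to 0$, while $\delta_J:=\limsup_n\|\scriptE r_n^J\|_q$ is monotone decreasing by \eqref{E:Lq orthog 1} with $\|\scriptE\phi^{J+1}\|_q^q=\delta_J^q-\delta_{J+1}^q$. The inverse inequality applied to $r_n^J$ gives $\|\phi^{J+1}\|_p\gtrsim c(\delta_J,M_J)$, where $M_J$ is the scale-1 threshold for $r_n^J$. Because $p\neq 2$, the $L^p$-mass $\|r_n^J\|_p$ need not decrease in $J$, so $M_J$ may grow and $c(\delta_J,M_J)$ deteriorates; closing the loop requires choosing the auxiliary exponents $(p_0,q_0)$ close enough to $(p,q)$ that the $M_J$-dependence in the inverse inequality is mild enough for the Bessel summability $\|\phi^{J+1}\|_p\to 0$ to force $\delta_J\to 0$. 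This is precisely where the weak-limit-based $L^p$-profile decomposition is genuinely weaker than its $L^2$-counterpart, echoing the introduction's discussion of the difference between weak and $\Delta$-limits.
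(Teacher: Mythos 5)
Your architecture (inverse inequality, center separation via Riemann--Lebesgue, Br\'ezis--Lieb in $L^q$) is correct in outline and your inverse inequality sketch is workable, but the termination step (\ref{E:ErnJ 0 1}) is a genuine gap that your iteration does not close, and you say as much yourself. The obstruction is exactly what you identify but then wave at: the inverse inequality supplies a lower bound of the form $\|\scriptE\phi^{J+1}\|_q\gtrsim B_J(B_J/A_J)^C$ with $A_J:=\limsup_n\|r_n^J\|_p$, and in $L^p$ (unlike $L^2$) there is no identity controlling $A_J$ --- the paper explicitly warns that the remainders ``may blow up in $L^p$.'' Thus if $A_J\to\infty$, the lower bound is compatible with $\|\scriptE\phi^{J+1}\|_q\to 0$ while $\delta_J$ stays bounded away from zero, and no choice of auxiliary $(p_0,q_0)$ changes this dimensional analysis: the deterioration comes from $A_J$, not from exponent slack. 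The paper does not iterate the inverse inequality on the $L^p$ remainders at all. Instead it truncates $f_n\rightsquigarrow f_n^M$ so that the truncated sequence is $L^2$-bounded, applies the full $L^2$ profile decomposition (Lemma~\ref{L:L2 profile scale 1}) to $f_n^M$ where the Hilbert-space mass identity forces termination, shows via a counting argument that the cores stabilize in $M$ (at most $O(\eps^{-q})$ nontrivial profiles), and then transfers the uniform-in-$M$ smallness of $\|\scriptE r_n^{M,J_\eps}\|_q$ back to $\|\scriptE r_n^{J_\eps}\|_q$ using the scale-1 hypothesis and Fatou. That detour through $L^2$ is the mechanism that actually produces (\ref{E:ErnJ 0 1}); your approach is missing it.

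A secondary issue is your treatment of the $L^p$ almost-orthogonality (\ref{E:Lp orthog 1}). The inequalities there carry constant exactly $1$, which Clarkson/Hanner-type bounds do not deliver (they give $p$-dependent constants such as $2^{p-1}$). The paper instead constructs vector-valued mollified-restriction operators $(\Pi_r)_n^J$ (Lemma~\ref{L:Lp almost orthog 1}) and shows $\|(\Pi_r)_n^J\|_{L^p\to\ell^{\tilde p}(L^p)}\to 1$ as $r\to 0$, $n\to\infty$, via a Schur-test computation at $p=2$ and interpolation with the trivial endpoints $p\in\{1,\infty\}$; passing to the limit then yields (\ref{E:Lp orthog 1}) with the sharp constant. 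You would need to supply a comparably precise almost-orthogonality mechanism, both for (\ref{E:Lp orthog 1}) itself and because the uniform-in-$M$ tail bound $\sum_{j>J_\eps}\|\scriptE\phi^{M,j}\|_q^q\lesssim\eps^{q-\tilde p}$ in the paper's termination argument rests on precisely this sharp $L^p$ Bessel inequality.
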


In what follows, we write $\omega=(\omega_1,\omega')$. 

\begin{theorem}[Large scale spatial decomposition] \label{T:large space}
Let $1<p<\tfrac{2(d+1)}d$ and $q=\tfrac{d+2}dp'$. Assume that the restriction conjecture for $\scriptE$ holds on a neighborhood of $(p,q)$.  
Let $\{f_n\} \subset L^p(\S^d)$ be a bounded sequence and let $\lambda_n\searrow 0$.  Assume that 
\begin{equation} \label{E:Efn>M large space}
\begin{gathered}
    \lim_{M \to \infty}\limsup_{n \to \infty} \sup_{E \subseteq E_n^{M}} \|\scriptE f_n \chi_E\|_{L^q(\R^{d+1})} = 0, \qtq{where}\\
    E_n^{M} := \{|f_n|>M(\lambda_n)^{-d/p}\} \cup \{\dist(\omega,[e_1])>M\lambda_n\}
\end{gathered}
\end{equation}
After passing to a subsequence, there exist $\{x_n^j\}_{j,n \in \N} \subset \R^{d+1}$ with
$$
\lim_{n \to \infty}\bigl( \lambda_n^2|(x_n^j-x_n^{j'})_1|+\lambda_n|(x_n^j-x_n^{j'})'|\bigr) = \infty, \qtq{for} j \neq j',
$$
 and weak limits $\phi^{j,\bullet} \in L^p(\R^d)$, $\bullet = +,-$, given by
 \begin{equation} \label{E:phijpm}
\phi^{j,\pm} = \wklim \lambda_n^{d/p}e^{-ix_n^j(\pm\sqrt{1-|\lambda_n\xi|^2},\lambda_n\xi)}f_n(\pm\sqrt{1-|\lambda_n\xi|^2},\lambda_n\xi)\chi_{\{|\xi|<\frac12 \lambda_n^{-1}\}},
\end{equation}
such that the following conclusions hold.  Setting 
\begin{equation}\label{E:gnj def} 
    g_n^{j,\pm}(\omega):=   \lambda_n^{-d/p}\phi^{j,\pm}(\lambda_n^{-1}\omega')\chi_{\{\pm\omega_1>0\}}\chi_{\{|\omega'|<\tfrac12\}}, \qquad g_n^j:=\sum_{\pm} g_n^{j,\pm},
\end{equation}
and 
\[ 
r_n^J:=f_n-\sum_{j=1}^J e^{ix_n^j \omega}g_n^{j},\] 
then,\\
\emph{(i)} $\lim_{n\to \infty} \| \scriptE g_n^j -\sum_\pm \lambda_n^{\frac{d+2}q}e^{\pm ix_1}\scriptE_\P \phi^{j,\pm}(\mp\lambda_n^2x_1,\lambda_n x') \|_{L^q(\R^{d+1})} = 0.$  \\
\emph{(ii)} $\bigl[\sum_\pm (\sum_j\|\phi^{j,\pm}\|_{L^p(\R^{d})}^{\tilde p}\bigr)^{p/\tilde p}\bigr]^{1/\tilde p} \leq \liminf \|f_n\|_{L^p(\S^{d})}$,\\
\emph{(iii)} $\limsup_{n \to \infty}\|\sum_{j=1}^J e^{ix_n^j\omega}g_n^j\|_{L^p(\S^{d})} \leq \bigl[\sum_\pm \bigl(\sum_j\|\phi^{j,\pm}\|_{L^p(\R^{d})}^{\tilde p'}\bigr)^{p/\tilde p'}\bigr]^{1/\tilde p'}$, $J \in \N$,\\
\emph{(iv)} $\lim_{n \to \infty} \|\scriptE f_n\|_{L^q(\R^{d+1})}^q - \sum_{j=1}^J\|\scriptE g_n^j\|_{L^q(\R^{d+1})}^q - \|\scriptE r_n^J\|_{L^q(\R^{d+1})}^ q = 0$, $J \in \N$\\
\emph{(v)} $\lim_{J \to \infty}\limsup_{n \to \infty} \|\scriptE r_n^J\|_{L^q(\R^{d+1})} = 0$.
\end{theorem}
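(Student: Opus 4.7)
The plan is to reduce Theorem~\ref{T:large space} to a spacetime-translation profile decomposition for the parabolic extension operator $\scriptE_\P:L^p(\R^d)\to L^q(\R^{d+1})$, by rescaling each of the two caps of radius $\sim\lambda_n$ around $\pm e_1$ up to unit scale. For each sign $\bullet\in\{+,-\}$, I parametrize the cap $\{\bullet\omega_1>0,\,|\omega'|<\tfrac12\}$ by $\omega = (\bullet\sqrt{1-|\lambda_n\xi|^2},\lambda_n\xi)$ and define the pullbacks
\[
\tilde f_n^\bullet(\xi) := \lambda_n^{d/p} f_n(\bullet\sqrt{1-|\lambda_n\xi|^2},\lambda_n\xi)\chi_{\{|\xi|<\frac12\lambda_n^{-1}\}},
\]
so that $\{\tilde f_n^\bullet\}$ is bounded in $L^p(\R^d)$. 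Hypothesis \eqref{E:Efn>M large space}, together with the extension bound near $(p,q)$, guarantees that replacing $f_n$ by the sum of its restrictions to these two caps costs only $o_{L^q}(1)$ after extension. Combining the Taylor expansion $\sqrt{1-|\lambda_n\xi|^2}=1-\tfrac12|\lambda_n\xi|^2+O(\lambda_n^4|\xi|^4)$ on the relevant range with the change of variables $(t,y)=(\mp\lambda_n^2 x_1,\lambda_n x')$ produces the dispersive identification
\[
\scriptE(f_n\chi_{\{\bullet\omega_1>0,\,|\omega'|<1/2\}})(x) = \lambda_n^{(d+2)/q}\,e^{\bullet i x_1}\,\scriptE_\P\tilde f_n^\bullet(\mp\lambda_n^2 x_1,\lambda_n x')+o_{L^q}(1),
\]
which, noting $\lambda_n^{(d+2)/q}=\lambda_n^{d/p'}$ from $q=\tfrac{d+2}d p'$, will yield conclusion (i) once profiles are identified.

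Next, I would extract profiles iteratively in the rescaled picture, mimicking the proof of Theorem~\ref{T:space1} but on $\R^d$ with $\scriptE_\P$ in place of $\scriptE$. At stage $j$, if the remaining rescaled sequences still produce nontrivial parabolic extension, I choose a spacetime translation $(t_n^j,y_n^j)\in\R^{1+d}$ so that along a subsequence at least one of $e^{-i(t_n^j,y_n^j)\cdot(\tfrac12|\xi|^2,\xi)}\tilde f_n^\bullet$ admits a nontrivial weak $L^p(\R^d)$ limit $\phi^{j,\bullet}$; the weak limit along the same translation on the other sheet furnishes the (possibly zero) partner $\phi^{j,\mp}$. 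Setting $x_n^j := (-\lambda_n^{-2}t_n^j,\lambda_n^{-1}y_n^j)$ makes \eqref{E:phijpm} the weak limit on the rescaled cap, and the required divergence $\lambda_n^2|(x_n^j-x_n^{j'})_1|+\lambda_n|(x_n^j-x_n^{j'})'|\to\infty$ is precisely $|(t_n^j,y_n^j)-(t_n^{j'},y_n^{j'})|\to\infty$ in the rescaled coordinates. Subtracting the associated bubbles $e^{ix_n^j\omega}g_n^j$ and iterating, a quantitative lower bound on the $L^p$-mass of any profile yielding parabolic extension norm $\ge\varepsilon$ (from the $\scriptE_\P:L^p\to L^q$ bound) forces only finitely many bubbles to exceed any threshold, producing (v).

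The orthogonality statements (ii)--(iv) then follow from standard weak-limit machinery once this framework is in place. The $L^p$ bounds in (ii) and (iii) are Clarkson-type convexity inequalities bracketed by the dual exponents $\tilde p$ and $\tilde p'$, relying on the translation divergence to kill cross-terms exactly as in Theorem~\ref{T:space1}; the split over $\pm$ comes from the disjointness of the two hemispherical supports of the $g_n^{j,\pm}$. The $L^q$ decoupling (iv) follows from the identification (i) combined with the asymptotic orthogonality of parabolic extensions under divergent spacetime translations on $\R^{1+d}$. The main technical obstacle is the cohabitation of the two antipodal hemispheres at each profile index: the contributions $\scriptE_\P\phi^{j,\pm}$ appear in (i) modulated by the unit-frequency oscillations $e^{\pm ix_1}$, while the parabolic extensions themselves vary on the much longer scale $\lambda_n^{-2}$ in $x_1$. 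I would handle this by an $x_1$-frequency decoupling (essentially nonstationary phase, since the two $\pm$ pieces are concentrated in asymptotically disjoint $x_1$-Fourier supports), which keeps the $\pm$ contributions asymptotically $L^q$-orthogonal across different $j$ and aligns the bookkeeping between the sphere and paraboloid profile decompositions.
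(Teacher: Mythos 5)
The most substantial issue is your argument for conclusion (v), which is exactly the place where $p\neq 2$ differs from the Hilbert-space case, and where the paper's proof invests essentially all of its effort. Your plan is to iterate the inverse restriction lemma directly on the rescaled remainders $\tilde r_n^J$, arguing that a quantitative lower bound $\|\phi^{J+1}\|_p\gtrsim B_J(B_J/A_J)^C$ combined with $\sum_j\|\phi^{j,\pm}\|_p^{\tilde p}\lesssim 1$ forces $B_J\to 0$. The difficulty is that the constant $A_J=\limsup_n\|r_n^J\|_p$ in the inverse restriction lower bound is not uniformly controlled when $p\neq 2$: subtracting a weak $L^p$ limit from a sequence need not decrease the $L^p$ norms (the phenomenon the paper highlights via Opial's example), and the only \emph{a priori} bound the almost-orthogonality machinery supplies on $\|\sum_{j\leq J}e^{ix_n^j\omega}g_n^j\|_p$ involves $\sum_j\|\phi^{j,\bullet}\|_p^{\tilde p'}$, a sum that can genuinely diverge since $\tilde p'\leq\tilde p$ and only $\sum_j\|\phi^{j,\bullet}\|_p^{\tilde p}$ is known to be summable. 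So $A_J$ may blow up, the inverse-restriction lower bound degrades to nothing, and the iteration does not close. Your remark that you are ``mimicking the proof of Theorem~\ref{T:space1}'' should have been a warning: that proof does \emph{not} iterate the inverse restriction in $L^p$; it truncates $f_n$ pointwise to get a sequence in every $L^s$, applies the $L^2$ profile decomposition to the truncation, and then carefully removes the truncation parameter using the $\tilde p$-summability of the profiles together with the smallness of the extension of the over-threshold part of $f_n$. The proof of Theorem~\ref{T:large space} in the paper mirrors this: after setting $f_n^M$, it runs a rescaled version of the $L^2$ lemma on $\lambda_n^{\alpha_2}f_n^M$, shows the truncated remainders $r_n^{M,J}$ have small extension via interpolation across the scaling line, then performs the $M$-uniformization step (the estimate of $\|\scriptE(r_n^{M,J_\eps}-r_n^{M,J_{M,\eps}})\|_q$) before passing to the limit $M\to\infty$. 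This uniformization is a genuine additional argument, not ``standard weak-limit machinery.''

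Two smaller points. First, the identification (i) in the paper is proved by approximation (Lemma~\ref{L:sphere to parab}); your Taylor expansion and change of variables is the right heuristic, but you would still need the dominated-convergence argument or something equivalent to conclude convergence in $L^q$ and not just pointwise. Second, the $L^q$ decoupling (iv) across the two hemispheres is not merely nonstationary phase: the paper uses the generalized Br\'ezis--Lieb lemma from \cite{FLS} on a four-way decomposition $\alpha_n^M=\pi_n^M+\rho_n^M+\sigma_n^M$ precisely to handle the fast unit-frequency oscillation $e^{\pm ix_1}$ riding on the slowly-varying parabolic extension; your proposed $x_1$-frequency argument points in the right direction but would need to be combined with the Br\'ezis--Lieb step to actually yield (iv). With the truncation-to-$L^2$ strategy added and these points tightened, the rest of your outline — the cap rescaling, the vector-valued almost-orthogonality in the style of Lemma~\ref{L:Lp almost orthog large}, and the reordering/diagonal arguments — does align with the paper's structure.
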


\section{Proof of the frequency decomposition} \label{S:freq decomp}

In this section, we will prove Theorem~\ref{T:freq} and Proposition~\ref{P:subcrit freq}, which decompose the functions from bounded sequences into pieces with good frequency localization.  

We begin with the essentially elementary proof of  Proposition~\ref{P:subcrit freq}, in which $(p,q)$ lies off of the parabolic scaling line.  

\begin{proof}[Proof of Proposition~\ref{P:subcrit freq}]
By assumption, $\scriptE$ extends as a bounded operator from $L^r(\S^d)$ to $L^s(\R^{d+1})$ for $(r,s)$ in a neighborhood of $(p,q)$.  In particular, $\scriptE$ maps $L^r(\S^d)$ into $L^q(\R^{d+1})$ for some $r<p$.  For $f \in L^p(\S^d)$, $M>0$, and $E\subseteq E^M=\{
|f|>M\}$ which is a measurable set,
$$
\|\scriptE (f\chi_E)\|_{L^q(\R^{d+1})} \lesssim \|f \chi_E\|_{L^r(\S^d)}  \leq M^{-(\frac pr-1)}\|f\|_{L^p(\S^d)}^{\frac pr}.
$$
Hence for a bounded sequence $\{f_n\}$, the stated conclusion holds.
\end{proof}

The frequency localization is more involved for exponents on the scaling line.  The heart of the argument is the bilinear extension theorem of Tao \cite{TaoParab} and the bilinear-to-linear argument of Tao--Vargas--Vega \cite{TVV}, with inspiration from \cite{BV, Bourgain98, CarlesKeraani}.

\begin{lemma}[\cite{TaoParab}] \label{L:bilin}
Let $1<p<\tfrac{2(d+1)}d$ and $q=\tfrac{d+2}d p'$, and assume that the spherical extension conjecture holds on a neighborhood of $(p,q)$.  Then there exists $s < p$ such that for all $(\bar s,\bar q)$ in a neighborhood of $(s,q)$, all $0 < r \ll 1$, and $f \in L^{\bar s}$, we have the bilinear inequality
\begin{equation}\label{E:bilin}
\|\scriptE (f \chi_\tau) \scriptE (f\chi_{\tau'})\|_{\frac{\bar q}2} \lesssim r^{-2d(\tfrac{d+2}{d\bar q}-\tfrac1{\bar s'})}\|f\chi_\tau\|_{L^{\bar s}(\S^d)}\|f\chi_{\tau'}\|_{L^{\bar s}(\S^d)},
\end{equation}
whenever $\tau,\tau'$ are defined by
$$
\tau:=\{\nu \in S^d : \dist([\nu],[\omega]) \ll r\}, \qquad \tau':=\{\nu \in \S^d : \dist([\nu],[\omega']) \ll r\},
$$
for some $\omega,\omega' \in \S^d$ with $\dist([\omega],[\omega']) \sim r$.  The implicit constant is independent of $f,\tau,\tau',r$.  
\end{lemma}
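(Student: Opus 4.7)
The plan is to reduce the claim to a bilinear extension estimate on the paraboloid via parabolic rescaling. First I reduce to single caps: since $\tau, \tau'$ are preimages of $\ll r$-balls in $\R\P^d$, each splits on $\S^d$ as a disjoint union of two antipodal small caps, so the bilinear norm is bounded by the sum of four bilinear norms over single-cap pairs. For the two pairs whose centers lie at $\S^d$-distance $\sim r$, the setup is already favorable; for the two cross-antipodal pairs (centers near antipodal), the change of variable $\omega \mapsto -\omega$ inside one of the extension integrals turns the product into one of the favorable kind (up to complex conjugation and a replacement $f \leftrightarrow \overline{f(-\cdot)}$ that preserves all $L^{\bar s}$ norms on the caps involved). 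Thus it suffices to treat a single pair $(\tau_0, \tau_0')$ of caps of radius $\ll r$ with centers $\omega_0, \omega_0' \in \S^d$ at distance $\sim r$.

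Second, I apply the parabolic rescaling. After rotation, arrange $\omega_0 = e_{d+1}$ and parametrize a neighborhood of $\omega_0$ via $\omega = (\omega', \sqrt{1-|\omega'|^2})$. Substituting $\omega' = r\zeta$ compresses $\tau_0$ to a unit-size region, and rescaling physical space by $y = (rx', r^2 x_{d+1})$ identifies, modulo a harmless unimodular prefactor and an $O(r^2)$ phase correction, the spherical extension $\scriptE(f\chi_{\tau_0})$ with the paraboloid extension $\scriptE_\P F$ of $F(\zeta) := f(r\zeta, \sqrt{1-r^2|\zeta|^2})$; the same rescaling places the image of $\tau_0'$ on a unit-size cap of the paraboloid at unit angular separation. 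Tracking Jacobians,
\begin{align*}
\|\scriptE(f\chi_{\tau_0})\scriptE(f\chi_{\tau_0'})\|_{L^{\bar q/2}(\R^{d+1})} &\sim r^{2d - 2(d+2)/\bar q}\,\|\scriptE_\P F \cdot \scriptE_\P F'\|_{L^{\bar q/2}(\R^{d+1})},\\
\|f\chi_{\tau_0}\|_{L^{\bar s}(\S^d)} &\sim r^{d/\bar s}\,\|F\|_{L^{\bar s}(\R^d)},
\end{align*}
and similarly for $\tau_0'$. The problem thereby reduces to a paraboloid bilinear estimate $\|\scriptE_\P F \cdot \scriptE_\P F'\|_{L^{\bar q/2}} \lesssim \|F\|_{\bar s}\|F'\|_{\bar s}$ on unit-separated, unit-size caps.

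Such an estimate, valid for $(\bar s, \bar q)$ in a neighborhood of some $(s, q)$ with $s < p$, is obtained by interpolating Tao's bilinear theorem (which gives $L^2 \times L^2 \to L^{q_0/2}$ for every $q_0 > \tfrac{2(d+3)}{d+1}$ on transverse caps) against the Cauchy--Schwarz consequence of the linear paraboloid extension, itself a transfer of the assumed linear spherical estimate near $(p, q)$ via the same rescaling applied in reverse. The hypothesis $p < \tfrac{2(d+1)}{d}$ places the relevant scaling-line exponent above Tao's threshold, so the interpolation opens an open region of valid bilinear exponents in which the condition $s < p$ is achievable; by openness of the interpolation and of the hypothesis, the estimate extends to a full neighborhood of $(s, q)$. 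Substituting back and simplifying via $2d - 2(d+2)/\bar q - 2d/\bar s = -2d(\tfrac{d+2}{d\bar q} - \tfrac{1}{\bar s'})$ yields the claimed inequality. The most delicate technical point is the bookkeeping of the $O(r^2)$ phase error in the paraboloid approximation and the verification that constants are uniform in $(\bar s, \bar q, r, f)$; both are standard in the restriction literature (see \cite{TaoParab, TVV}) and can be handled, e.g., by replacing $\S^d$ on the cap with a smooth surface matching it to second order, or by a dyadic decomposition of physical space along the $x_{d+1}$-axis.
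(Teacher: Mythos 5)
Your proof is correct and uses exactly the same three ingredients as the paper's: splitting the antipodal caps via $\omega\mapsto-\omega$ and conjugation, Cauchy--Schwarz applied to the assumed linear estimate near $(p,q)$, interpolation with Tao's bilinear extension theorem, and parabolic rescaling with the scaling bookkeeping you display (and which does check out). The only genuine difference is the order of operations: the paper interpolates \emph{on the sphere} to obtain \eqref{E:bilin} in the $r\sim 1$ regime, then invokes parabolic rescaling for $r\ll 1$ with the details left to the reader, whereas you rescale to the paraboloid first, carry out the interpolation there, and transfer back. Your reorganization makes the rescaling step explicit and profits from the paraboloid's exact scaling symmetry, so the $r$-dependence in \eqref{E:bilin} drops out automatically; the cost is that you must visibly control the $O(r^2)$ phase discrepancy between sphere and paraboloid, which the paper's order of steps also silently requires but hides in the final remark. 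Both treat that estimate as standard, so the two proofs are at the same level of rigor; neither has a gap.
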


We give the details of the deduction from the remarks in Section~9 of \cite{TaoParab}.  

\begin{proof}  Since exponents $\bar q$ lying sufficiently close to $q$ have similar properties (namely, finiteness of $S_{\bar p \to \bar q}$ when $\bar p = (\tfrac{2q}{d+2})'$), it suffices prove that such an estimate holds at $(s,q)$, for $s$ lying in some open subinterval of $(1,p)$.

After a rotation, we may assume that $\omega = e_1$.  We decompose $\tau = \tau_0 \cup (-\tau_0)$, $\tau' = \tau_0' \cup (-\tau_0')$, where $\diam(\tau_0) \sim \diam(\tau_0') \ll r \sim \dist(\tau_0,\tau_0')$.  Using the triangle inequality and taking conjugates, it suffices to bound
$$
\|\scriptE(f\chi_{\tau_0})\,\scriptE(f\chi_{\tau'_0})\|_{L^{\frac q2}(\R^{d+1})} .
$$ 

By assumption, for $\bar q$ sufficiently near $q$ and $\bar p:=(\tfrac{d\bar q}{d+2})'$, we have the linear estimate $S_{\bar p \to \bar q}<\infty$.  Hence by Cauchy--Schwarz, for any $f_1$, $f_2$: 
\begin{equation} \label{E:CS 1 bilin}
\|\scriptE f_1 \scriptE f_2 \|_{L^{\frac{\bar q}2}} \leq \|\scriptE f_1\|_{L^{\bar q}}\| \scriptE f_2 \|_{L^{\bar q}} \leq S_{\bar p \to \bar q}^2 \|f_1\|_{L^{\bar p}(\S^d)}\| f_2 \|_{L^{\bar p}(\S^d)}.
\end{equation}

Further Tao's bilinear estimate \cite{TaoParab} gives that for any $t>\frac{2(d+3)}{d+1}$ and any pair of test functions $f_1$, $f_2$ supported on spherical caps whose width and separation are comparable to some sufficiently small dimensional constant, 
\begin{equation} \label{E:Tao 1 bilin}
\|\scriptE f_1 \scriptE f_2 \|_{L^{\frac t2}(\R^{d+1})} \leq C \|f_1\|_{L^2(\S^{d})}\| f_2 \|_{{L^2(\S^{d})}}.
\end{equation}

Hence by interpolating \eqref{E:Tao 1 bilin} for some $\tfrac{2(d+3)}{d+1} < t < \tfrac{2(d+2)}d$ with \eqref{E:CS 1 bilin}, we see that \eqref{E:bilin} holds in the case $r\sim 1$.  Inequality \eqref{E:bilin} in the case $r \ll 1$ follows by parabolic rescaling; we omit the details.  
\end{proof}

For technical reasons, we will use a slightly non-typical definition of caps.  By a \textit{(square) cap} in $\S^d$, we mean the intersection of $\S^d$ with with two axis-parallel cubes having diameter at most $\tfrac14$, with centers given by two antipodal points contained in $S^d$.  By the sidelength of a cap, we mean the sidelength of the corresponding cube.   For each $j\in\{1,2\ldots,d\}$ and each $k \in \N$ satisfying $k \geq C$ for some sufficiently large $C$, we fix a non-overlapping covering $\scriptD^k_j$ of the nonequatorial region $W_j:= \{\omega \in S^d:|\omega_j| \geq \tfrac1{2\sqrt d}\}$ by caps  of sidelength $2^{-k}$.  We denote various unions of the $\scriptD^k_j$ by
$$
\scriptD^k:=\bigcup_{j=1}^d \scriptD^k_j, \qquad \scriptD_j:=\bigcup_{k \geq C}\scriptD^k_j, \qquad \scriptD:=\bigcup_{j=1}^d \scriptD_j.
$$
We say that two caps $\tau,\tau' \in \scriptD^k_j$ are related, $\tau \sim \tau'$, if $2^{-k+C} \leq \dist(\tau,\tau') \leq 2^{-k+ 2C}$.  It is well-known (see \cite{TVV, BV}) that when $C$ is sufficiently large, all of the possible sumsets $\tau+\tau'$ for related $\tau,\tau' \in \scriptD^k, k\geq C$ are contained in finitely overlapping parallelepipeds.  (These parallelepipeds have much smaller volume than the cubes whose intersection with $S^d$ equals $\tau,\tau'$.)  We use $f_\tau$ to denote the product $f \cdot \chi_\tau$.

\begin{lemma} \label{L:positive bound}
Let $q > \frac{2(d+1)}d$ and $p = (\frac{qd}{d+2})'$, and assume that the spherical extension conjecture holds on a neighborhood of $(p,q)$.  There exists $s<p$ such that for sufficiently small $0<\nu<1$,  
\begin{equation} \label{E:positive bound}
\|\scriptE f\|_q \lesssim (\sup_{\tau \in \scriptD}|\tau|^{-\frac1{p'}}\|\scriptE f_\tau\|_\infty)^\nu \bigl(\sum_{\tau \in \scriptD} |\tau|^{-2t(1-\nu)(\frac1s-\frac1p)}\|f_\tau\|_{L^s(\S^d)}^{2t(1-\nu)}\bigr)^{\frac1{2t}}.
\end{equation}
Here $t := \min\{\frac q2,(\frac q2)'\}$.
\end{lemma}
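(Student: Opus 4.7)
My plan is to adapt the Tao--Vargas--Vega bilinear-to-linear machinery, substituting Lemma~\ref{L:bilin} for Tao's parabolic bilinear estimate. I would first apply a Whitney decomposition of $\S^d \times \S^d$ off the diagonal, writing
\[
|\scriptE f|^2 = \sum_{k \geq C}\sum_{\substack{\tau,\tau'\in \scriptD^k\\ \tau \sim \tau'}} \scriptE f_\tau\,\overline{\scriptE f_{\tau'}}
\]
(with bounded overlap). Each product $\scriptE f_\tau\,\overline{\scriptE f_{\tau'}}$ has Fourier support in a parallelepiped of dimensions comparable to those of $\tau - \tau'$; by the construction of $\scriptD$, these parallelepipeds are finitely overlapping both within a single scale and across scales.

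Next, I would invoke parallelepiped orthogonality in $L^{q/2}$ (square function / Rubio de Francia for $q/2 \geq 2$, and Hausdorff--Young / $p$-orthogonality for $q/2 < 2$) to obtain
\[
\bigl\||\scriptE f|^2\bigr\|_{q/2}^{t} \lesssim \sum_{k \geq C}\sum_{\tau \sim \tau'} \bigl\|\scriptE f_\tau\,\overline{\scriptE f_{\tau'}}\bigr\|_{q/2}^{t},
\]
where $t = \min\{q/2,(q/2)'\}$. Because $q > 2(d+1)/d > 2$, we have $t > 1$, which is needed to absorb the geometric content of the sum. For each summand I would then apply the elementary interpolation inequality
\[
\|H\|_{q/2} \leq \|H\|_{\bar q/2}^{1-\nu}\|H\|_\infty^{\nu}, \qquad \nu = 1 - \bar q/q,
\]
with $(\bar s,\bar q)$ taken slightly inside the neighborhood of $(s,q)$ provided by Lemma~\ref{L:bilin}. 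The $L^{\bar q/2}$ factor of $H = \scriptE f_\tau\,\overline{\scriptE f_{\tau'}}$ is controlled by Lemma~\ref{L:bilin}, while the $L^\infty$ factor is estimated by $\|\scriptE f_\tau\|_\infty \|\scriptE f_{\tau'}\|_\infty \leq M^2|\tau|^{-2/p'}$ directly from the definition of $M$; this is the mechanism that introduces the $M^\nu$-power in the conclusion.

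Finally, I would sum over related pairs at each scale using AM--GM (valid because the equivalence $\sim$ has bounded fan-out by construction of $\scriptD^k$), converting $\|f_\tau\|_s^{t(1-\nu)}\|f_{\tau'}\|_s^{t(1-\nu)}$ into $\|f_\tau\|_s^{2t(1-\nu)}$ per $\tau$, and then sum over scales. The hard part will be the exponent arithmetic in this last step: the scale factor $r^{-\gamma(\bar s,\bar q)(1-\nu)t}$ produced by the bilinear estimate, the factor $|\tau|^{-2\nu t/p'}$ picked up from the $L^\infty$ bound, and any H\"older correction incurred in passing from $\|f_\tau\|_{\bar s}$ to $\|f_\tau\|_s$ must combine to reproduce the target per-cap weight $|\tau|^{-2t(1-\nu)(1/s-1/p)}$. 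The openness of the restriction conjecture hypothesis on a neighborhood of $(p,q)$ furnishes the latitude to calibrate $(\bar s,\bar q)$ against $\nu$ so that this arithmetic closes, and the hypothesis $s<p$ guaranteed by Lemma~\ref{L:bilin} ensures $1/s - 1/p > 0$, so the resulting weight genuinely decays in $|\tau|$ and the estimate has content.
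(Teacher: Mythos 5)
Your proposal is essentially the paper's proof. The paper uses the same ingredients in the same order: a Whitney decomposition into related antipodal caps, the $\ell^t$ almost-orthogonality lemma (Lemma~6.1 of \cite{TVV}) with $t=\min\{q/2,(q/2)'\}$, convexity of $L^{q/2}$ norms with $\bar q=(1-\nu)q$ (i.e.\ $\nu=1-\bar q/q$) to split each bilinear piece into an $L^\infty$ factor and an $L^{\bar q/2}$ factor, the bilinear estimate of Lemma~\ref{L:bilin} for the latter, and AM--GM plus bounded fan-out of $\sim$ to collapse the pair sum into a single sum over caps. The one detail you gloss over that the paper handles explicitly is the reduction from $\S^d$ to a single non-equatorial cap region $W_j$ (via the triangle inequality and a rotation) before invoking the Whitney decomposition; this is needed because the cap family $\scriptD$ is built only over the $W_j$, so decomposing ``all of $\S^d\times\S^d$'' is not quite compatible with the definitions. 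As for the ``hard part'' you defer, the exponent arithmetic does close: with $r\sim|\tau|^{1/d}$ and $q=\tfrac{d+2}{d}p'$, the bilinear scale factor $r^{-2d(\frac{1}{(1-\nu)p'}-\frac{1}{s'})}$ combined with the $|\tau|^{\frac{2\nu}{(1-\nu)p'}}$ bookkeeping term from the interpolation step gives exactly $|\tau|^{-2(\frac1s-\frac1p)}$, independent of $\nu$ --- so no further calibration of $(\bar s,\bar q)$ against $\nu$ is needed beyond taking $\nu$ small enough that Lemma~\ref{L:bilin} applies.
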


\begin{proof}
By Lemma~\ref{L:bilin}, there exists $s<p$ such that for sufficiently small $0<\nu<1$ and $\bar q := (1-\nu)q$, \eqref{E:bilin} holds with exponent $\frac{\bar q}2$ on the left side and exponent $s$ on the right.

By the triangle inequality, there exists $j$ such that $\|\scriptE f\|_q \lesssim \|\scriptE (f\chi_{W_j})\|_q$, and after a rotation, we may assume that $j=1$.  Employing a Whitney decomposition of $W_1 \times W_1 \setminus\{(\omega,\omega):\omega \in S^d\}$, followed by almost orthogonality (Lemma~6.1 of \cite{TVV}), H\"older's inequality, and finally our bilinear extension estimate, the arithmetic-geometric mean inequality, and some reindexing,
\begin{align*}
\|\scriptE (f\chi_{W_1})\|_q^2 &=  \|\sum_{\tau \sim \tau' \in \scriptD_1} c_{\tau,\tau'}\scriptE f_\tau \scriptE f_{\tau'}\|_{\frac q2}
\lesssim \bigl(\sum_{\tau \sim \tau' \in \scriptD}\|\scriptE f_\tau \scriptE f_{\tau'}\|_{\frac q2}^t\bigr)^{\frac 1t}\\
&\leq \bigl(\sum_{\tau \sim \tau' \in \scriptD}(|\tau|^{-\frac2{p'}}\|\scriptE f_\tau\scriptE f_{\tau'}\|_\infty)^{t\nu}(|\tau|^{\frac{2\nu}{(1-\nu)p'}}\|\scriptE f_\tau \scriptE f_{\tau'}\|_{\frac{\bar q}2})^{t(1-\nu)}\bigr)^{\frac1t}\\
& \lesssim 
(\sup_{\tau \in \scriptD} |\tau|^{-\frac1{p'}}\|\scriptE f_\tau\|_\infty)^{2\nu}
\bigl(\sum_{\tau \in \scriptD} (|\tau|^{-(\frac1s-\frac1p)}\|f_\tau\|_{L^s(\S^d)})^{2t(1-\nu)} \bigr)^{\frac1t}.
\end{align*}
Here the $c_{\tau,\tau'}$ are constants with $|c_{\tau,\tau'}| \lesssim 1$.  
\end{proof}

\begin{lemma} \label{L:scalable}
Let $q=\tfrac{d+2}d p' > p > 1$, and assume that the spherical extension conjecture holds on a neighborhood of $(p,q)$.  Then there exist $c_0 > 0$ and $0 < \theta < 1$ such that
\begin{equation} \label{E:scalable}
\|\scriptE f\|_q \lesssim (\sup_{\tau \in \scriptD}|\tau|^{-\frac1{p'}}\|\scriptE f_\tau\|_\infty)^\theta \|f\|_p^{1-\theta} \lesssim \sup_{\tau \in \scriptD} \sup_{n\geq0} 2^{-c_0n}\|f_{\tau,n}\|_p^\theta\|f\|_p^{1-\theta},
\end{equation}
where $f_{\tau,n}$ equals $f$ multiplied by the characteristic function of $\tau \cap \{|f| \leq 2^n\|f\|_p |\tau|^{-\frac1p}\}$.
\end{lemma}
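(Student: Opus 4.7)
My plan is to establish the two parts of \eqref{E:scalable} via separate arguments: for the first, I invoke Lemma~\ref{L:positive bound} and bound the resulting cap-sum; for the second, I use a layer-cake decomposition of $f_\tau$.

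To obtain the first inequality, I apply Lemma~\ref{L:positive bound} with small $\nu > 0$, which gives
\[
\|\scriptE f\|_q \lesssim A^\nu \Bigl(\sum_\tau \bigl(|\tau|^{-(\frac1s-\frac1p)}\|f_\tau\|_s\bigr)^{2t(1-\nu)}\Bigr)^{\frac{1}{2t}}, \qquad A := \sup_{\tau \in \scriptD} |\tau|^{-\frac1{p'}}\|\scriptE f_\tau\|_\infty.
\]
H\"older's inequality on each cap gives $|\tau|^{-(\frac1s-\frac1p)}\|f_\tau\|_s \leq \|f_\tau\|_p$; the hypothesis $q > p$ ensures $2t > p$ throughout our range of exponents, so taking $\nu$ small keeps $2t(1-\nu) > p$. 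At each dyadic scale $k$, the finite-overlap property yields $\sum_{\tau \in \scriptD^k}\|f_\tau\|_p^p \lesssim \|f\|_p^p$, and interpolation against the sup gives
\[
\sum_{\tau \in \scriptD^k}\|f_\tau\|_p^{2t(1-\nu)} \leq \Bigl(\sup_{\tau \in \scriptD^k}\|f_\tau\|_p\Bigr)^{2t(1-\nu)-p}\sum_{\tau \in \scriptD^k}\|f_\tau\|_p^p.
\]
Since $\sup_{\tau\in\scriptD^k}\|f_\tau\|_p \to 0$ as $k \to \infty$ by dominated convergence (after reducing to bounded $f$ by density), summing over $k$ produces $B \lesssim \|f\|_p^{2t(1-\nu)}$, which gives the first inequality with $\theta = \nu$.

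For the second inequality, I use a layer-cake decomposition. Writing $f_\tau = \sum_{n \geq 0}(f_{\tau,n} - f_{\tau,n-1})$ with $f_{\tau,-1} := 0$, the $n$-th difference with $n \geq 1$ is supported on the level set $\{2^{n-1}\|f\|_p|\tau|^{-1/p} < |f| \leq 2^n\|f\|_p|\tau|^{-1/p}\}\cap\tau$, which by Chebyshev's inequality has measure at most $2^{-(n-1)p}|\tau|$. Since $|f_{\tau,n-1}|\leq|f_{\tau,n}|$ pointwise we have $\|f_{\tau,n}-f_{\tau,n-1}\|_p \leq 2\|f_{\tau,n}\|_p$, and H\"older yields
\[
|\tau|^{-\frac1{p'}}\|f_{\tau,n}-f_{\tau,n-1}\|_1 \leq 2\cdot 2^{-(n-1)(p-1)}\|f_{\tau,n}\|_p.
\]
Using $\|\scriptE f_\tau\|_\infty \leq \|f_\tau\|_1$, summing over $n$, and substituting $\|f_{\tau,n}\|_p \leq 2^{c_0 n}\sup_{\tau',n'}2^{-c_0 n'}\|f_{\tau',n'}\|_p$ produces a geometric series convergent for any $c_0 \in (0,p-1)$. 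This yields $A \lesssim \sup_{\tau,n}2^{-c_0 n}\|f_{\tau,n}\|_p$, and then raising to the $\theta$-th power and multiplying through by $\|f\|_p^{1-\theta}$ (rescaling $c_0$ by $\theta$) gives the second inequality.

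The main obstacle lies in Part~1: although $\sup_{\tau \in \scriptD^k}\|f_\tau\|_p \to 0$ for every $f \in L^p$, the rate can be arbitrarily slow for rough $f$, while the scale-summability argument na\"ively requires geometric decay. I would resolve this either by a density reduction to bounded $f$ (for which the supremum decays geometrically in $k$) combined with continuity of both sides of \eqref{E:scalable} in $f$, or by iterating Lemma~\ref{L:positive bound} at truncated scales while using the $A$-factor to control the tail more carefully.
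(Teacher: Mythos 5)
Your Part~2 (the second inequality in \eqref{E:scalable}) is correct and essentially identical to the paper's argument: both perform the layer-cake decomposition $f_\tau = \sum_n (f_{\tau,n}-f_{\tau,n-1})$, bound each term in $L^1(\tau)$ via H\"older with the built-in pointwise threshold, and sum a geometric series convergent for $c_0 < p-1$. (The paper writes this with $f_\tau^n := f_{\tau,n}-f_{\tau,n-1}$ and observes $|f_\tau^n|\sim 2^n|\tau|^{-1/p}\|f\|_p$ on its support, which streamlines the H\"older step, but this is a cosmetic difference.)

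Part~1 has a genuine gap, which you yourself flag, but neither of your two proposed resolutions closes it. The problem is that the bound
\[
\sum_{\tau\in\scriptD^k}\|f_\tau\|_p^{2t(1-\nu)}\leq\Bigl(\sup_{\tau\in\scriptD^k}\|f_\tau\|_p\Bigr)^{2t(1-\nu)-p}\sum_{\tau\in\scriptD^k}\|f_\tau\|_p^p
\]
produces, for bounded $f$, a factor $\|f\|_\infty^{2t(1-\nu)-p}$ after summing in $k$ (since $\sup_\tau\|f_\tau\|_p\lesssim\|f\|_\infty 2^{-kd/p}$). The resulting inequality $\|\scriptE f\|_q\lesssim A^\nu\bigl(\|f\|_\infty^{2t(1-\nu)-p}\|f\|_p^p\bigr)^{1/(2t)}$ has an implicit constant that blows up as $\|f\|_\infty/\|f\|_p\to\infty$, so the density argument cannot be used to pass to general $f\in L^p$: there is no uniform estimate to take a limit of. Continuity of both sides in $f$ does not help when the constant on the right is not uniform in the approximating sequence. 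The second suggestion, ``iterating Lemma~\ref{L:positive bound} at truncated scales,'' is not developed enough to evaluate, but a naive global truncation of $f$ at height $M$ does not obviously recover a usable bound either, since the $L^q$ norm of the extension of the tail is not controlled by the $L^1$ norm of the tail.

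The paper's resolution is different in a way that matters: instead of truncating $f$ globally, it truncates \emph{cap by cap} at the scale-adapted height $\|f\|_p|\tau|^{-1/p}$. Writing $f_\tau = f_\tau^0 + (f_\tau - f_\tau^0)$ with this threshold, it estimates the bounded piece in $L^{2t(1-\nu)}$ and the tail in $L^s$ with $s<p$, then applies Fubini to swap the sum over scales $k$ with the integral over $\S^d$. Because the truncation threshold scales like $|\tau|^{-1/p}\sim 2^{kd/p}$, each point $\omega$ contributes to only a geometric tail of scales in each of the two resulting $k$-sums, and one recovers $\lesssim\|f\|_p^{2t(1-\nu)}$ uniformly, with no $L^\infty$ assumption and no density limit. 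I would replace your Part~1 with this argument.
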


\begin{proof}  
We will prove the superficially stronger bound wherein $f_{\tau,n}$ is replaced by $f_\tau^n$ on the right hand side, where $f_\tau^0:=f_{\tau,0}$ and $f_\tau^n:=f_{\tau,n}-f_{\tau,n-1}$ for $n \geq 1$.  We observe that for $n \geq 1$, 
\begin{equation} \label{E:f tau n sim}
|f_\tau^n| \sim 2^n |\tau|^{-\frac1p}\|f\|_{L^p(\S^d)}\chi_{\{f_\tau^n \neq 0\}}.
\end{equation}

We begin by showing the second inequality in \eqref{E:scalable}.  We apply H\"older's inequality and decompose into the $f_\tau^n$ to see that
\begin{equation} \label{E:f tau n L1}
|\tau|^{-\frac1{p'}}\|\scriptE f_\tau\|_\infty \leq \sum_{n=0}^\infty |\tau|^{-\frac1{p'}} \int_{\S^d} |f_\tau^n|\, d\sigma.
\end{equation}
By H\"older's inequality, $|\tau|^{-\frac1p}\|f_\tau^0\|_{L^1} \lesssim \|f_\tau^0\|_{L^p}$, and by basic arithmetic and \eqref{E:f tau n sim}, 
$$
|\tau|^{-\frac1{p'}}\int_{\S^d}|f_\tau^n|\, d\sigma \lesssim 2^{-n(p-1)}\|f\|_p^{-(p-1)}\|f_\tau^n\|_p^p \leq 2^{-n(p-1)}\|f_\tau^n\|_p.
$$
Inserting these estimates into \eqref{E:f tau n L1}, using H\"older's inequality, and summing a geometric series,
$$
|\tau|^{-\frac1{p'}}\|\scriptE f_\tau\|_\infty \lesssim \sup_{n \geq 0} 2^{-c_0 n}\|f_\tau^n\|_p,
$$
whenever $c_0<p-1$.  

Now we turn to the first inequality in \eqref{E:scalable}.  By \eqref{E:positive bound}, it suffices to prove that 
$$
\sum_{\tau \in \scriptD} |\tau|^{-2t(1-\nu)(\frac1s-\frac1p)}\|f_\tau\|_s^{2t(1-\nu)} \lesssim \|f\|_p^{2t(1-\nu)}.
$$

Since $2t>p$, for $\nu$ sufficiently small, $2t(1-\nu)>p>s$.  Hence by the triangle inequality and H\"older's inequality, 
\begin{align} \label{E:sum f taus}
\sum_{\tau \in \scriptD}|\tau|^{-2t(1-\nu)(\frac1s-\frac1p)}\|f_\tau\|_s^{2t(1-\nu)}
\lesssim& \sum_{\tau \in \scriptD}|\tau|^{-2t(1-\nu)(\frac1{2t(1-\nu)}-\frac1p)}\|f_\tau^0\|_{2t(1-\nu)}^{2t(1-\nu)}\\\notag
&+ (\sum_{\tau \in \scriptD}|\tau|^{-2t(1-\nu)(\frac1s-\frac1p)}\|f_\tau-f_\tau^0\|_s^s)^{\frac{2t(1-\nu)}s}.
\end{align}
We recall that $\scriptD = \bigcup_{k \geq 0} \scriptD_k$ and that each $\scriptD_k$ is a finitely overlapping cover of $S^d$ by caps $\tau$ of measure $2^{-kd}$.  Again using the fact that $s < 2t(1-\nu)$, we may bound the right hand side of \eqref{E:sum f taus} by a constant multiple of 
\begin{align*}
&\sum_k 2^{-2kdt(1-\nu)(\frac1p-\frac1{2t(1-\nu)})} \int_{\{|f|\lesssim2^{\frac{kd}p}\|f\|_p\}} |f|^{2t(1-\nu)}\,d\sigma\\
&\qquad + (\sum_k 2^{kds(\frac1s-\frac1p)}\int_{\{|f| \gtrsim 2^{\frac{kd}p}\|f\|_p\}} |f|^s d\sigma)^{\frac{2t(1-\nu)}s}.
\end{align*}
By Fubini, the preceding sum equals 
\begin{align*}
&\int |f|^{2t(1-\nu)} \bigl(\sum_{k:|f|\lesssim2^{\frac{kd}p}\|f\|_p}2^{-2kdt(1-\nu)(\frac1p-\frac1{2t(1-\nu)})}\bigr)\,d\sigma\\
&\qquad + \left(\int |f|^s \bigl(\sum_{k:|f| \gtrsim 2^{\frac{kd}p}\|f\|_p}2^{kds(\frac1s-\frac1p)}\bigr)\,d\sigma\right)^{\frac{2t(1-\nu)}s},
\end{align*}
and we conclude by summing the geometric series.
\end{proof}

\begin{proof}[Proof of Theorem~\ref{T:freq} for $q=\tfrac{d+2}d p'$]
Multiplying by a constant and passing to a subsequence, we may assume that $\|f_n\|_p \to 1$ and that $\|f_n\|_p \leq 2$ for all $n$. We begin by decomposing the $f_n$ into \textit{chips} with good frequency/$L^p$ orthogonality (but whose extensions are not orthogonal in space/$L^q$).  Namely, we set $r_n^0:=f_n$ and   
$$
f_n = \sum_{j=1}^J h_n^j + r_n^J, \qquad 0 \leq J < \infty,
$$
with $h_n^j = r_n^{j-1}\chi_{\tau_n^j}\chi_{\{|f_n|\leq2^j |\tau_n^j|^{-\frac1p}\}}$.  Here $\tau_n^j$ is a dyadic spherical cap, with center $\nu_n^j$ and diameter $\rho_n^j$, that is chosen to maximize $\|h_n^j\|_p$.  An  application of the dominated convergence theorem shows that there is indeed such a maximal cap for each $j,n$ and that for each $n$, $\sum_j h_n^j$ converges to $f_n$ in $L^p(\S^d)$.  

The next lemma is the key step to establish conclusion (vi) of the theorem.  

\begin{lemma} \label{L:freq remainders 0}
Under the hypotheses of Theorem~\ref{T:freq}, remainders in the decomposition above satisfy
$$
\lim_{J \to \infty} \sup_{n \in \N} \sup_{|R_n^J| = |r_n^J|\chi_E} \|\scriptE R_n^J\|_q = 0,
$$
where the supremum is taken over measurable functions $R_n^J$ whose absolute value equals a characteristic function times $|r_n^J|$.   
\end{lemma}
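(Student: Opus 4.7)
The plan is to apply the first inequality of Lemma~\ref{L:scalable} to $R_n^J$ to obtain
$$
\|\scriptE R_n^J\|_q \lesssim \Bigl(\sup_{\tau \in \scriptD}|\tau|^{-1/p'}\|\scriptE(R_n^J)_\tau\|_\infty\Bigr)^\theta \|R_n^J\|_p^{1-\theta}.
$$
Since $\|R_n^J\|_p \leq \|r_n^J\|_p \leq \|f_n\|_p \leq 2$ and $|\scriptE(R_n^J)_\tau(x)| \leq \int_\tau |R_n^J|\, d\sigma \leq \int_\tau |r_n^J|\, d\sigma$, the problem reduces to showing
$$
\sup_n \sup_\tau |\tau|^{-1/p'}\int_\tau |r_n^J|\, d\sigma \longrightarrow 0 \qquad \text{as } J\to\infty.
$$

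For each integer $J'$ with $0\leq J'\leq J$, I would split the integral at the threshold $A_{\tau,J'}:=2^{J'+1}|\tau|^{-1/p}$ according to whether $|f_n|\leq A_{\tau,J'}$ or not. On the low portion, H\"older's inequality together with the pointwise monotonicity $|r_n^J|\leq |r_n^{J'}|$ (which follows from $J\geq J'$ and the fact that each $r_n^j$ arises from $r_n^{j-1}$ by zeroing out on the support of $h_n^j$) and the maximality in the choice of $\tau_n^{J'+1}$ among caps bounds the contribution by $\|h_n^{J'+1}\|_p$. On the high portion, Chebyshev's inequality applied to $|f_n|$ at threshold $A_{\tau,J'}$ followed by H\"older gives a bound $C \cdot 2^{-(p-1)(J'+1)}\|f_n\|_p^p \lesssim 2^{-(p-1)(J'+1)}$, independent of $\tau$. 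Taking the minimum over $J'$ yields
$$
\sup_\tau |\tau|^{-1/p'}\int_\tau |r_n^J|\, d\sigma \leq \min_{0\leq J'\leq J}\bigl[\|h_n^{J'+1}\|_p + C\cdot 2^{-(p-1)(J'+1)}\bigr].
$$

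The main obstacle is that $\|h_n^{J+1}\|_p$ need not tend to zero uniformly in $n$, and converting to uniform smallness requires a pigeonhole argument driven by the $L^p$-orthogonality $\sum_j \|h_n^j\|_p^p \leq \|f_n\|_p^p \leq 2^p$, which yields the crucial uniform bound $\#\{j:\|h_n^j\|_p\geq \epsilon\}\leq 2^p/\epsilon^p$. Given $\epsilon>0$, I would first pick $J_0=J_0(\epsilon)$ so that $C\cdot 2^{-(p-1)(J_0+1)}<\epsilon$, and then take $J$ large enough that the window $\{J_0+1,\dots,J+1\}$ contains more than $2^p/\epsilon^p$ integers. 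The pigeonhole principle then forces some $J'+1$ in this window to satisfy $\|h_n^{J'+1}\|_p<\epsilon$, so both terms in the minimum are below $\epsilon$ and the minimum itself is at most $2\epsilon$ uniformly in $n$. Inserting this into the Lemma~\ref{L:scalable} estimate gives $\sup_n\sup_E\|\scriptE R_n^J\|_q \lesssim (2\epsilon)^\theta$, which proves the claim upon letting $\epsilon\downarrow 0$. The supremum over admissible $E$ is automatic, since the argument only used $|R_n^J|\leq |r_n^J|$ and was therefore insensitive to the choice of set on which $|R_n^J|$ matches $|r_n^J|$.
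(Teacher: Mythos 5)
Your proposal is correct and takes essentially the same approach as the paper: both apply Lemma~\ref{L:scalable} to $R_n^J$, exploit the maximality of the caps $\tau_n^j$ in the chip construction together with the $\ell^p$ orthogonality $\sum_j\|h_n^j\|_p^p\leq\|f_n\|_p^p\leq 2^p$, and conclude via a counting/pigeonhole argument. The only difference is cosmetic: the paper invokes the second inequality of Lemma~\ref{L:scalable} and proves $\|R_n^J\chi_\tau\chi_{\{|R_n^J|\leq 2^j\|R_n^J\|_p|\tau|^{-1/p}\}}\|_p\leq 2/(J-j)^{1/p}$ by a single count, while you use the first inequality and carry out the level-set split and pigeonhole over $J'$ explicitly.
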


\begin{proof}[Proof of Lemma~\ref{L:freq remainders 0}]
Let $J \geq 1$, $n \in \N$, and $|R_n^J| = |r_n^J|\chi_E$.  We claim that for any dyadic spherical cap $\tau$ and $j < J$,
\begin{equation} \label{E:rnJ on tau}
\|R_n^J \chi_\tau \chi_{\{|R_n^J| \leq 2^j\|R_n^J\|_p|\tau|^{-\frac1p}\}}\|_{L^p(\S^d)} \leq \tfrac2{(J-j)^{\frac1p}}.
\end{equation}
Indeed, if \eqref{E:rnJ on tau} holds, then taking $j \sim \tfrac J2$ and applying Lemma~\ref{L:scalable} completes the proof of the lemma.

 Now we turn to \eqref{E:rnJ on tau}.  By the construction of each $r_n^i$ from $r_n^{i-1}$, there exist measurable sets $E_n^0 \supseteq \cdots \supseteq \ldots\supseteq E_n^{J}=E$, such that $|R_n^J| = |r_n^i|\chi_{E_n^i}$, $i=0,\ldots,J$.  We also recall that $r_n^0 = f_n$.  We then have
$$
|R_n^J| \chi_\tau \chi_{\{|R_n^J| \leq 2^j\|R_n^J\|_p|\tau|^{-\frac1p}\}} \leq |r_n^{i-1}| \chi_\tau \chi_{\{|f_n| \leq 2^i \|f_n\|_p|\tau|^{-\frac1p}\}}, \qquad i=j,\ldots,J+1.
$$
Since each $\tau_n^i$ is chosen to maximize $\|h_n^i\|_p$, if \eqref{E:rnJ on tau} were to fail, we would also have $\|h_n^i\|_p > \tfrac2{(J-j)^{\frac1p}}$ for $i=j,\ldots,J+1$, whence $\|f_n\|_p^p \geq \sum_{i=j}^{J+1}\|h_n^i\|_p^p > 2$, a contradiction.  This completes the proof of \eqref{E:rnJ on tau}, and hence the lemma.
\end{proof}

Our next step is to organize the chips into clumps, which also possess good spatial orthogonality.  After passing to a successively passing to a subsequence for each $j$ and then choosing a diagonal subsequence, we may assume the existence of all of the limits arising below.  We form a partition $\N = \bigcup_{i=1}^{\infty} \scriptJ^i$ (the $\scriptJ^i$ may be empty for large $i$) so that $j$ and $j'$ lie in the same $\scriptJ^i$ if and only if the conditions $\lim_{n \to \infty} \tfrac{\rho_n^j}{\rho_n^{j'}} \in (0,\infty)$ and $\{(\rho_n^j)^{-1} \dist([\nu_n^j],[\nu_n^{j'}])\}_{n \in \N}$ is bounded, both hold.  Thus for $i \neq i'$, $j \in \scriptJ^i, j' \in \scriptJ^{i'}$ implies 
$$
\lim_{n \to \infty}\tfrac{\rho_n^j}{\rho_n^{j'}} \in \{0,\infty\}; \qtq{or} \lim_{n \to \infty} \tfrac{\rho_n^j}{\rho_n^{j'}} \in (0,\infty), \qtq{and} \lim_{n \to \infty} (\rho_n^j)^{-1}\dist([\nu_n^j],[\nu_n^j]) = \infty.
$$

Let $F_n^i := \sum_{j \in \scriptJ^i} h_n^j$ and $R_n^I := f_n-\sum_{i=1}^I F_n^i$; for each $n$, these sums converge in $L^p$ by the dominated convergence theorem. Passing to a further subsequence, we may associate to each $\scriptJ^i$ sequences $\{\lambda_n^i\}$, $\{[\omega_n^i]\}$ satisfying, for each $i$: $\lambda_n^i \to 0$ or $\lambda_n^i \equiv 1$, and $[\omega_n^i] \equiv [e_1]$ if $\lambda_n^i \equiv 1$; and $\lim \tfrac{\rho_n^j}{\lambda_n^i} \in (0,\infty)$, and $(\lambda_n^i)^{-1}\dist([\omega_n^i],[\nu_n^j])$ is bounded, for $j \in \scriptJ^i$.  

Conditions (i-iii) of Theorem~\ref{T:freq} are automatic, while (v-vi) are simple consequences of the construction and Lemma~\ref{L:freq remainders 0}.  It remains to prove (iv), which asserts that the extensions of the $F_n^i$ have good $L^q$ orthogonality.  

We define
$$
f_n^{\leq J}:=f_n-r_n^J, \qquad F_n^{i,\leq J}:=\sum_{j \in \scriptJ^i, j \leq J} h_n^j, \qquad R_n^{I,\leq J}:=f_n^{\leq J}-\sum_{i=1}^I F_n^{i,\leq J}.
$$
By Lemma~\ref{L:freq remainders 0}, for each $I$, 
$$
\lim_{n \to \infty} \|\scriptE f_n\|_q^q - \sum_{i=1}^I \|\scriptE F_n^i\|_q^q - \|\scriptE R_n^I\|_q^q = \lim_{J \to \infty}\lim_{n \to \infty} \|\scriptE f_n^{\leq J}\|_q^q - \sum_{i=1}^I \|\scriptE F_n^{i,\leq J}\|_q^q - \|\scriptE R_n^{I,\leq J}\|_q^q,
$$
so to prove (iv), it suffices to prove that for all $I$ and $J$,
$$
\lim_{n \to \infty} \|\scriptE f_n^{\leq J}\|_q^q - \sum_{i=1}^I \|\scriptE F_n^{i,\leq J}\|_q^q - \|\scriptE R_n^{I,\leq J}\|_q^q = 0.  
$$

The elementary inequality
$$
\left| |\sum_{i=1}^I x_i|^q - \sum_{i=1}^I |x_i|^q \right| \leq C_{q,I} \left(\sup_{i \neq j} |x_i||x_j|^{q-1}\right), \qquad q \geq 2,
$$
H\"older's inequality, boundedness of the $\|f_n\|_p$, and the triangle inequality imply that 
$$
\left|\|\scriptE f_n^{\leq J}\|_q^q - \sum_{i=1}^I \|\scriptE F_n^{i,\leq J}\|_q^q - \|\scriptE R_n^{I,\leq J}\|_q^q\right| \leq C_{I,J,q} \sup_{i \neq i', j \in \scriptJ^i, j' \in \scriptJ^{i'}} \|\scriptE h_n^j \scriptE h_n^{j'}\|_{\frac q2} .  
$$

Thus it remains to prove that for $i \neq i'$, $j \in \scriptJ^i$, and $j' \in \scriptJ^{i'}$,
\begin{equation} \label{E:Lq bilin decoup}
\lim_{n \to \infty} \|\scriptE h_n^j \scriptE h_n^{j'}\|_{\frac q2} = 0.  
\end{equation}
Suppose first that $\tfrac{\rho_n^j}{\rho_n^{j'}} \to \infty$.  By assumption, there exist $q_0 < q < q_1$, $p_0 > p > p_1$ with
$$
\tfrac 1q=\tfrac1{2q_0}+\tfrac1{2q_1}, \qquad q_i = \tfrac{d+2}d p_i',
$$
such that $\scriptE$ extends as a bounded linear operator from $L^{p_i}$ to $L^{q_i}$, $i=0,1$.  By H\"older's inequality, boundedness of $\scriptE$, and the support conditions and pointwise boundedness of $h_n^j, h_n^{j'}$,
\begin{align*}
\|\scriptE h_n^j \scriptE h_n^{j'}\|_{\frac q2} &\leq \|\scriptE h_n^j\|_{q_0}\|\scriptE h_n^{j'}\|_{q_1} \lesssim \|h_n^j\|_{p_0} \|h_n^{j'}\|_{p_1} \\
&\lesssim 2^{j+j'} (\rho_n^j)^{\frac d{p_0}-\frac d{p}} (\rho_n^{j'})^{\frac d{p_1}-\frac dp} = 2^{j+j'} \bigl(\tfrac{\rho_n^{j'}}{\rho_n^j}\bigr)^{\frac d{p_1}-\frac dp} \to 0.
\end{align*}
By symmetry in $j,j'$, it remains to consider the case when $\lim\tfrac{\rho_n^j}{\rho_n^{j'}} \in (0,\infty)$ and $\lim_{n \to \infty}(\rho_n^j)^{-1}\dist([\nu_n^j],[\nu_n^{j'}]) = \infty$.  Of course, this implies that $\rho_n^j,\rho_n^{j'} \to 0$.  Set $r_n = \tfrac1{100} \dist([\nu_n^j],[\nu_n^{j'}])$.  Then $\tfrac{\rho_n^j}{r_n} \to 0$, and hence, for large $n$, $\tau_n^j \subseteq B([\nu_n^j],r_n)$, $\tau_n^{j'} \subseteq B([\nu_n^{j'}],r_n)$.  By Lemma~\ref{L:bilin}, for some $s<p$, 
\begin{align*}
\|\scriptE h_n^j \scriptE h_n^{j'}\|_{\frac q2} &\lesssim r_n^{-(\frac{2d}{s} - \frac{2d}p)}\|h_n^j\|_s\|h_n^{j'}\|_s \leq \bigl(\tfrac{\rho_n^j}{r_n}\bigr)^{d(\frac1s-\frac1p)}\bigl(\tfrac{\rho_n^{j'}}{r_n}\bigr)^{d(\frac1s-\frac1p)} \to 0.
\end{align*}
We have thus established \eqref{E:Lq bilin decoup}, completing the proof of (iv) in Theorem~\ref{T:freq}.  
\end{proof}

\section{Scale one spatial decomposition:  Proof of Theorem~\ref{T:space1}} \label{S:scale 1 profiles}

In this section, we establish a finer decomposition for sequences that are (almost) pointwise bounded, in the sense that 
$$
\lim_{M \to \infty} \limsup_{n \to \infty} \|\scriptE f_n\chi_{\{|f_n|>M\}}\|_q = 0.
$$

\begin{lemma}\label{L:inv rest scale 1}
If, under the hypotheses of Theorem~\ref{T:space1}, we have in addition that
$$
\limsup_{n \to \infty} \|f_n\|_p \leq A < \infty \qtq{and} \liminf_{n \to \infty} \|\scriptE f_n\|_q \geq B > 0,
$$
then there exists $\{x_n\} \subseteq \R^{d+1}$ such that after passing to a subsequence,
\begin{equation} \label{E:scale 1 x_n}
e^{-ix_n\omega}f_n \rightharpoonup \phi, \qtq{weakly in} L^p, \qtq{with} \|\scriptE \phi\|_q \gtrsim B(\tfrac BA)^C.
\end{equation}
Furthermore, along this subsequence,
\begin{equation} \label{E:scale 1 Lq decoup}
\lim_{n \to \infty} \|\scriptE f_n\|_q^q - \|\scriptE(f_n-e^{ix_n\omega}\phi)\|_q^q - \|\scriptE \phi\|_q^q = 0.
\end{equation}
\end{lemma}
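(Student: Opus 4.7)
The plan is to locate translates $x_n\in\R^{d+1}$ at which $|\scriptE f_n(x_n)|$ is pointwise of order $B(B/A)^C$, take $\phi$ to be the weak $L^p$-limit of the modulated subsequence $e^{-ix_n\omega}f_n$, and then upgrade the pointwise bound at $x=0$ on $\scriptE\phi$ to an $L^q$-lower bound using that $\scriptE\phi$ is uniformly Lipschitz on the scale $1/A$ (thanks to the compactness of $\S^d$). The orthogonality identity will then follow from a standard Brezis--Lieb splitting.

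I would first truncate. By Chebyshev, $\|f_n-f_n^M\|_1\leq M^{-(p-1)}A^p$ where $f_n^M:=f_n\chi_{\{|f_n|\le M\}}$, so together with the pointwise-localization hypothesis from Theorem~\ref{T:space1}, both $\|\scriptE(f_n-f_n^M)\|_q$ and $\|\scriptE(f_n-f_n^M)\|_\infty$ can be made much smaller than any quantity of order $B$ by taking $M$ large (depending on the sequence); in particular $\|\scriptE f_n^M\|_q\geq\tfrac34 B$ for $n$ large. Next, the hypothesis that the extension conjecture holds on a neighborhood of $(p,q)$ produces admissible exponents $(p_0,q_0)$ with $q_0<q$ and $p_0\geq p$ (take $p_0=p$ when $q>\tfrac{d+2}dp'$, and $p_0>p$ slightly when $q=\tfrac{d+2}dp'$). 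The bound $\|f_n^M\|_{p_0}\lesssim M^{1-p/p_0}A^{p/p_0}$ together with the elementary inequality
$$
B^q\lesssim\|\scriptE f_n^M\|_q^q\leq \|\scriptE f_n^M\|_\infty^{q-q_0}\|\scriptE f_n^M\|_{q_0}^{q_0}
$$
yields $\|\scriptE f_n^M\|_\infty\gtrsim B(B/A)^C$ for some exponent $C=C(p,q,d)>0$. Picking $x_n$ nearly attaining the supremum and using the $L^\infty$-smallness of $\scriptE(f_n-f_n^M)$, I obtain $|\scriptE f_n(x_n)|\gtrsim B(B/A)^C$.

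The modulated sequence $\tilde f_n:=e^{-ix_n\omega}f_n$ is bounded in the reflexive space $L^p(\S^d)$, so along a subsequence $\tilde f_n\rightharpoonup\phi$, with $\|\phi\|_p\leq A$. Testing weak convergence against the kernel $e^{ix\cdot}\in L^{p'}(\S^d)$ yields pointwise convergence $\scriptE\tilde f_n(x)\to\scriptE\phi(x)$ and, in particular, $|\scriptE\phi(0)|=\lim|\scriptE f_n(x_n)|\gtrsim B(B/A)^C$. To upgrade this to an $L^q$ bound, I use the elementary gradient estimate $|\nabla\scriptE\phi|\leq \|\phi\|_1\leq \sigma(\S^d)^{1/p'}A$, which shows that $|\scriptE\phi|\geq\tfrac12|\scriptE\phi(0)|$ on a ball of radius $\gtrsim |\scriptE\phi(0)|/A\gtrsim(B/A)^{C+1}$ around the origin; integrating then gives $\|\scriptE\phi\|_q\gtrsim B(B/A)^{C'}$ after enlarging the exponent $C$ to a suitable $C'$.

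Finally, the orthogonality identity \eqref{E:scale 1 Lq decoup} is a Brezis--Lieb splitting: the a.e.\ convergence $\scriptE\tilde f_n\to\scriptE\phi$ together with the uniform bound $\|\scriptE\tilde f_n\|_q=\|\scriptE f_n\|_q\lesssim A$ (from the extension inequality) gives $\|\scriptE\tilde f_n\|_q^q-\|\scriptE(\tilde f_n-\phi)\|_q^q-\|\scriptE\phi\|_q^q\to 0$, and undoing the modulation via translation invariance of Lebesgue measure on $\R^{d+1}$ produces \eqref{E:scale 1 Lq decoup}. The principal obstacle is the boundary case $q=\tfrac{d+2}dp'$: here $(p,q_0)$ is no longer admissible for $q_0<q$, one is forced to take $p_0>p$, and the interpolation $\|f_n^M\|_{p_0}\lesssim M^{1-p/p_0}A^{p/p_0}$ threads the (sequence-dependent) truncation level $M$ into the constant. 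An alternative that sidesteps this subtle point is to apply the refined Strichartz inequality of Lemma~\ref{L:scalable} directly to locate a cap on which $\scriptE f_n$ is large, but the weak-limit argument then requires tracking both the cap $\tau_n$ (rotated to a fixed cap along the subsequence) and the translation $x_n$ simultaneously.
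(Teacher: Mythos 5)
Your argument handles the subcritical case $q>\tfrac{d+2}dp'$ correctly, and your verification of the Brezis--Lieb identity \eqref{E:scale 1 Lq decoup} is fine. In that subcritical case, though, the truncation to $f_n^M$ is unnecessary: since $\scriptE$ maps $L^p$ boundedly into $L^{q_0}$ for some $q_0<q$ (with $p$ itself, not a shifted $p_0$), one can run the $L^{q_0}$--$L^\infty$ H\"older interpolation directly on $f_n$, which is what the paper does.

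The real issue is the endpoint case $q=\tfrac{d+2}dp'$, and you have correctly diagnosed the gap without closing it. Your chain of estimates produces $\|\scriptE f_n^M\|_\infty \gtrsim B^{q/(q-q_0)}M^{-q_0(1-p/p_0)/(q-q_0)}A^{-q_0p/(p_0(q-q_0))}$, and the factor $M^{-q_0(1-p/p_0)/(q-q_0)}$ is fatal: $M$ is chosen depending on the sequence $\{f_n\}$ via the qualitative localization hypothesis, so your lower bound on $|\scriptE\phi(0)|$, and hence on $\|\scriptE\phi\|_q$ via the Lipschitz argument, is not of the form $B(B/A)^C$ for admissible $C$. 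This matters: when Lemma~\ref{L:inv rest scale 1} is iterated in the profile decomposition (Lemma~\ref{L:L2 profile scale 1}), the uniform lower bound is what forces a definite decrement of $\|r_n^J\|_2^2$ at each stage and hence a contradiction if the remainder's extension does not tend to zero. A sequence-dependent constant (which could degrade as $J$ grows) would not deliver this. Your closing observation that one should invoke the refined restriction inequality of Lemma~\ref{L:scalable} is exactly right — that is the paper's route. Lemma~\ref{L:scalable} produces, with a constant depending only on $(p,q,d)$ and nearby operator norms, a cap $\tau_n$ with $|\tau_n|^{-1/p'}\|\scriptE(f_n\chi_{\{|f_n|\leq M\}})_{\tau_n}\|_\infty \gtrsim B(B/A)^{(1-\theta)/\theta}$; the truncation level $M$ enters only to bound $|\tau_n|$ from below (by an $M$-dependent quantity), which is harmless because its only role is to reduce to finitely many candidate caps and pass to a single $\tau_M$ along a subsequence. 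The cap-localized $L^\infty$ lower bound is then converted to a cap-localized $L^q$ lower bound via Young's inequality using the parallelepiped $Q_{\tau_M}$ with $|Q_{\tau_M}|\sim|\tau_M|^{q/p'}$ — this replaces your global Lipschitz argument and crucially carries no $M$-dependence. The subsequent splitting into $\phi^g$ and $\phi^b$ and the Fatou estimate on $\|\scriptE\phi^b\|_q<\eps$ then close the argument. So: the approach you sketch as a last-resort alternative is in fact the necessary one, and the weak-limit bookkeeping you worry about (tracking $\tau_n$ and $x_n$) is resolved precisely by the finite-cap reduction.
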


The proof is simplest if we divide into two cases.  

\begin{proof}[Proof when $q>\tfrac{d+2}d p'$]
It follows from our hypotheses that the $\scriptE f_n$ are bounded, continuous functions.  After a modulation and multiplication by unimodular constants, we may assume that $\scriptE f_n(0) = \|\scriptE f_n\|_\infty$.  Under this normalization, we will prove \eqref{E:scale 1 x_n} with $x_n \equiv 0$.  After passing to a subsequence, there exists $\phi \in L^p$ such that $f_n \rightharpoonup \phi$, weakly in $L^p$.  Therefore
$$
\|\scriptE \phi\|_\infty \geq |\scriptE \phi(0)| = \lim|\scriptE f_n(0)| = \lim\|\scriptE f_n\|_\infty.
$$
By hypothesis, there exists $s<q$ such that $\scriptE$ extends as a bounded linear operator from $L^p$ to $L^s$.  By H\"older's inequality and our hypotheses,  
$$
B \leq \limsup\|\scriptE f_n\|_q \leq \limsup\|\scriptE f_n\|_s^{1-\frac sq}\|\scriptE f_n\|_\infty^{\frac sq} \lesssim A^{1-\frac sq}\lim \|\scriptE f_n\|_\infty^{\frac sq}.
$$
Therefore $\|\scriptE\phi\|_\infty \gtrsim B(\tfrac BA)^{\frac qs-1}$.  On the other hand, $\scriptE \phi = (\scriptE \phi)*h$ whenever $\hat h \equiv 1$ on $\S^d$, so by Young's inequality, $\|\scriptE f_n\|_\infty \lesssim \|\scriptE f_n\|_q$, and \eqref{E:scale 1 x_n} follows.  

The final conclusion, \eqref{E:scale 1 Lq decoup}, follows from the Brezis--Lieb lemma, since $\scriptE e^{-ix_n\omega}f_n \to \scriptE \phi$ pointwise, by virtue of the weak convergence.  \end{proof}

\begin{proof}[Proof when $q = \tfrac{d+2}d p'$]
In this argument, we will use the fact that a cap is the intersection of a union of antipodal cubes with $S^d$.   For $\tau \in \scriptD$, we denote by $Q_\tau$ a union of two rather smaller (relative to the cubes) antipodal parallelepipeds whose intersection with $S^d$ also gives $\tau$, but whose volume is comparable to that of the convex hull of $\tau$.  More precisely, due to the curvature of the sphere, $|Q_\tau| \sim |\tau|^{\frac q{p'}}$.  

Now let $0 < \eps < \tfrac B2$ be sufficiently small for later purposes.  By hypothesis, there exists $M>0$ (over which we have no control) such that
$$
\limsup_{n \to \infty} \|\scriptE (f_n \chi_{\{|f_n|>M\}})\|_q < \eps.
$$
 
By \eqref{E:scalable} and our hypotheses, 
$$
B(\tfrac B A )^{\frac{1-\theta}\theta} \lesssim \liminf_{n \to \infty} \sup_{\tau_n \in \scriptD} |\tau_n|^{-\frac1{p'}}\|\scriptE(f_n \chi_{\{|f_n| \leq M\}})_{\tau_n}\|_\infty.
$$
By H\"older's inequality, for each $n$, the above supremum may be taken over caps whose volumes are bounded below by a constant depending on $M$.  As there are only a finite number of such caps, after passing to a subsequence, there is a single $\tau_M$ that realizes the supremum for all sufficiently large $n$.  In other words,
\begin{equation} \label{E:lim inf Ef_n<M}
B(\tfrac B A)^{\frac{1-\theta}\theta} \lesssim \liminf_{n \to \infty} |\tau_M|^{-\frac1{p'}}\|\scriptE(f_n\chi_{\{|f_n|<M\}})_{\tau_M}\|_\infty.
\end{equation}
We may assume, after modulation, that $\scriptE(f_n\chi_{\{|f_n|<M\}})_{\tau_M}(0) = \|\scriptE(f_n\chi_{\{|f_n|<M\}})_{\tau_M}\|_\infty$.  

Passing to a subsequence, the weak limits
$$
\phi^g:=\wklim f_n\chi_{\{|f_n| \leq M\}}, \qquad \phi^b:=\wklim f_n\chi_{\{|f_n|>M\}}
$$
exist in $L^p$.  In particular, 
$$
\lim \scriptE(f_n\chi_{\{|f_n| \leq M\}})_{\tau_M}(x) = \scriptE (\phi^g)_{\tau_M}(x), \qquad \lim \scriptE(f_n\chi_{\{|f_n|>M\}})(x) = \scriptE \phi^b(x),
$$ 
for all $x \in \R^{1+d}$.  By Fatou,
$$
\|\scriptE \phi^b\|_q \leq \liminf_{n \to \infty} \|\scriptE(f_n\chi_{\{|f_n|>M\}})\|_q < \eps.
$$
On the other hand, by Young's inequality and $q' > 1$,
$$
|{\tau_M}|^{-\frac1{p'}} \|\scriptE \phi^b_{\tau_M}\|_\infty \leq |{\tau_M}|^{-\frac1{p'}} \|\widehat{\chi_{Q_{\tau_M}}}\|_{q'} \|\scriptE \phi^b\|_q \lesssim \|\scriptE \phi^b\|_q.
$$
Hence by the triangle inequality, \eqref{E:lim inf Ef_n<M}, Young's inequality, and $q'>1$,
$$
B(\tfrac B A)^{\frac{1-\theta}\theta} \lesssim |{\tau_M}|^{-\frac1{p'}} \|\scriptE \phi^g_{\tau_M}\|_\infty \leq |{\tau_M}|^{-\frac1{p'}} \|\widehat{\chi_{Q_{\tau_M}}}\|_{q'} \|\scriptE \phi^g\|_q \lesssim \|\scriptE \phi^g\|_q.
$$
Setting $\phi:=\phi^g+\phi^b = \wklim f_n$, the lower bound on the extension of $\phi^g$ and upper bound on the extension of $\phi^b$ give
$$
B({ \tfrac B A})^{\frac{1-\theta}\theta} \lesssim \|\scriptE \phi\|_q,  
$$
provided $\eps \ll B({\tfrac B A})^{\frac{1-\theta}\theta}$.  

The final inequality, \eqref{E:scale 1 Lq decoup}, follows as above from the Brezis--Lieb lemma and pointwise convergence $\scriptE e^{-ix_n\omega}f_n \to \scriptE \phi$. 
\end{proof}

In the special case $p=2$, iteratively applying Lemma~\ref{L:inv rest scale 1} and using elementary Hilbert space identities yields the following $L^2$ profile decomposition.  

\begin{lemma} \label{L:L2 profile scale 1}
If $q \geq \tfrac{2(d+2)}d$ and $\{f_n\}$ is a bounded sequence in $L^2(\S^d)$ satisfying
\begin{equation} \label{E:scale 1 condition}
\lim_{M \to \infty} \limsup_{n \to \infty} \|\scriptE f_n \chi_{\{|f_n| > M\}}\|_q = 0,
\end{equation}
then, after passing to a subsequence, there exist $\{x_n^j\}_{j,n \in \N} \subseteq \R^{d+1}$ obeying
\begin{equation} \label{E:L2 profile scale 1 asymp orthog}
\lim_{n \to \infty} |x_n^j-x_n^{j'}| = \infty, \qtq{for} j \neq j',
\end{equation}
and weak limits $\phi^j = \wklim_{n \to \infty} e^{-ix_n^j\omega}f_n \in L^2$, such that for every $J \in \N$,
\begin{gather}\label{E:L2 profile scale 1 L2 orthog}
\lim_{n \to \infty} \|f_n\|_2^2 - \sum_{j=1}^J\|\phi^j\|_2^2 - \|r_n^J\|_2^2 = 0\\\label{E:L2 profile scale 1 Lq orthog}
\lim_{n \to \infty} \|\scriptE f_n\|_q^q - \sum_{j=1}^J \|\scriptE \phi^j\|_q^q - \|\scriptE r_n^J\|_q^q = 0,
\end{gather}
and, moreover,
\begin{equation} \label{E:L2 profile scale 1 small err}
\lim_{J \to \infty} \lim_{n \to \infty} \|\scriptE r_n^J\|_q^q = 0.  
\end{equation}
\end{lemma}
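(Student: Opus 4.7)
The plan is to iteratively apply the inverse restriction estimate of Lemma~\ref{L:inv rest scale 1}, exploiting the Hilbert space structure of $L^2$ to obtain the clean orthogonality relations.  Setting $r_n^0 := f_n$, at step $J \geq 1$ I pass to a subsequence realizing $B_{J-1} := \liminf_n \|\scriptE r_n^{J-1}\|_q$; if $B_{J-1} = 0$, stop with $\phi^J \equiv 0$, otherwise apply Lemma~\ref{L:inv rest scale 1} to $r_n^{J-1}$ to extract $x_n^J$ and $\phi^J := \wklim e^{-ix_n^J\omega}r_n^{J-1}$, then set $r_n^J := r_n^{J-1} - e^{ix_n^J\omega}\phi^J$.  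A standard diagonal subsequence extracts all limits simultaneously.

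First, the hypotheses of Lemma~\ref{L:inv rest scale 1} must be inherited by each $r_n^{J-1}$.  The $L^2$-boundedness is automatic from the Pythagorean identity
\begin{equation*}
\|r_n^J\|_2^2 = \|r_n^{J-1}\|_2^2 - \|\phi^J\|_2^2 + o_n(1),
\end{equation*}
which follows from the weak convergence $e^{-ix_n^J\omega}r_n^{J-1} \rightharpoonup \phi^J$; telescoping gives \eqref{E:L2 profile scale 1 L2 orthog} and $\sum_j \|\phi^j\|_2^2 \leq \limsup_n \|f_n\|_2^2$, so in particular $A := \sup_J \limsup_n \|r_n^J\|_2 < \infty$.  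For the pointwise hypothesis \eqref{E:scale 1 condition}, I use $\{|r_n^{J-1}| > M\} \subseteq \{|f_n| > M/J\} \cup \bigcup_{j<J} \{|\phi^j| > M/J\}$: the first set is controlled by the original hypothesis, while Chebyshev gives $|\{|\phi^j| > M/J\}| \to 0$ as $M \to \infty$, so $L^p$-boundedness of $\scriptE$ plus dominated convergence on each fixed $\phi^j$ handles the remaining contributions.  Iterating \eqref{E:scale 1 Lq decoup} yields \eqref{E:L2 profile scale 1 Lq orthog}.

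For the asymptotic orthogonality \eqref{E:L2 profile scale 1 asymp orthog}, I would use strong induction on $j'$: assuming $|x_n^i - x_n^k| \to \infty$ for all $i < k < j'$, suppose for contradiction that $x_n^{j'} - x_n^{i_0}$ is bounded along a subsequence for some $i_0 < j'$, and refine so that $x_n^{j'} - x_n^{i_0} \to y$.  Expanding $r_n^{j'-1} = r_n^{i_0} - \sum_{k=i_0+1}^{j'-1} e^{ix_n^k\omega}\phi^k$,
\begin{equation*}
e^{-ix_n^{j'}\omega}r_n^{j'-1} = e^{-i(x_n^{j'}-x_n^{i_0})\omega}\bigl(e^{-ix_n^{i_0}\omega}r_n^{i_0}\bigr) - \sum_{k=i_0+1}^{j'-1} e^{-i(x_n^{j'}-x_n^k)\omega}\phi^k.
\end{equation*}
Since $e^{-ix_n^{i_0}\omega}r_n^{i_0} = (e^{-ix_n^{i_0}\omega}r_n^{i_0-1}) - \phi^{i_0} \rightharpoonup 0$, and $e^{-i(x_n^{j'}-x_n^{i_0})\omega} \to e^{-iy\omega}$ pointwise with uniformly bounded modulus, the first term tends weakly to $0$ via a direct $\epsilon/2$ test-function argument.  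For the summands, the triangle inequality $|x_n^k - x_n^{j'}| \geq |x_n^k - x_n^{i_0}| - |x_n^{i_0}-x_n^{j'}|$ and the induction hypothesis (applied to $i_0 < k < j'$) yield $|x_n^k - x_n^{j'}| \to \infty$, after which the Riemann--Lebesgue lemma for spherical measure (decay of $\widehat{\phi\,d\sigma}(z)$ at infinity for $\phi \in L^1(\S^d, \sigma)$, applied to $\phi^k \bar g$) gives $e^{-i(x_n^{j'}-x_n^k)\omega}\phi^k \rightharpoonup 0$.  Combining gives $\phi^{j'} = 0$, contradicting the quantitative lower bound $\|\scriptE \phi^{j'}\|_q \gtrsim B_{j'-1}(B_{j'-1}/A)^C > 0$.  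The termination \eqref{E:L2 profile scale 1 small err} then follows from \eqref{E:L2 profile scale 1 Lq orthog}, which implies $\sum_j \|\scriptE \phi^j\|_q^q \leq \limsup_n \|\scriptE f_n\|_q^q < \infty$ and hence $\|\scriptE \phi^j\|_q \to 0$; with $A$ bounded, the same lower bound forces $B_{J-1} \to 0$ as $J \to \infty$.

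The main obstacle will be the asymptotic-orthogonality induction:  the bookkeeping around weak limits under shifting modulations, and the invocation of the Rajchman property of $\sigma$, which is standard but requires a brief density reduction to smooth densities combined with the stationary-phase bound $|\hat\sigma(z)| \lesssim |z|^{-d/2}$.  Everything else is routine Hilbert-space manipulation layered on top of iteration of Lemma~\ref{L:inv rest scale 1}.
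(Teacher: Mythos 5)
Your argument is the same iterative scheme the paper sketches: extract profiles via Lemma~\ref{L:inv rest scale 1}, derive \eqref{E:L2 profile scale 1 L2 orthog} from the Hilbert-space Pythagorean identity, deduce \eqref{E:L2 profile scale 1 asymp orthog} by the standard algebraic contradiction (with the Rajchman property of $\sigma$ and local uniform convergence of the modulations), iterate \eqref{E:scale 1 Lq decoup} for \eqref{E:L2 profile scale 1 Lq orthog}, and conclude \eqref{E:L2 profile scale 1 small err} from the inverse estimate lower bound together with an orthogonality sum. One minor caveat your write-up shares with the paper's sketch: the claim that \eqref{E:scale 1 condition} is inherited by $r_n^{J}$ because ``a finite number of $L^2$ functions have been subtracted'' is slightly delicate, since $\|\scriptE f_n\chi_{\{|f_n|>M\}}\|_q\to 0$ does not by itself control $\|\scriptE (f_n\chi_E)\|_q$ for subsets $E\subset\{|f_n|>M\}$; in the paper's actual application (to the $L^\infty$-truncated $f_n^M$) the issue is vacuous, and in any case your argument is no less rigorous than the paper's on this point.
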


We briefly sketch the proof of the lemma, which follows a well-known outline (see, for instance, \cite{ClayNotes}).  Only minor modifications to the familiar argument are needed to use the condition \eqref{E:scale 1 condition}.  

\begin{proof}[Sketch proof of Lemma~\ref{L:L2 profile scale 1}]
We set $r_n^0:=f_n$.  Given some $r_n^J$, we are done (setting $\phi^{J'}=0$ for $J'>J$) if $\lim_{n\to \infty} \|\scriptE r_n^J\|_q = 0$.  Otherwise, we apply Lemma~\ref{L:inv rest scale 1}, which produces a nonzero weak limit $\phi^{J+1}$ and sequence $\{x_n^{J+1}\} \subseteq \R^{d+1}$; we then set $r_n^{J+1} := r_n^J-e^{ix_n^{J+1}\omega}\phi^{J+1}$.  If \eqref{E:L2 profile scale 1 asymp orthog} failed for some $j<j'$ but held with $j'$ replaced by any $j<i<j'$, then $x_n^j-x_n^{j'}$ would converge along a subsequence, and, after a bit of algebraic manipulation, we can deduce that $\phi^{j'} = 0$.  Thus \eqref{E:L2 profile scale 1 asymp orthog} holds.  The hypothesis \eqref{E:scale 1 condition} continues to hold with $r_n^{J+1}$ in place of $f_n$, as a finite number of $L^p$ functions have been subtracted from the $f_n$.  Equation \eqref{E:L2 profile scale 1 L2 orthog} follows from basic Hilbert space manipulations, and \eqref{E:L2 profile scale 1 Lq orthog} may be proved inductively using Br\'ezis--Lieb.  Finally, \eqref{E:L2 profile scale 1 small err} holds because \eqref{E:scale 1 x_n} and \eqref{E:L2 profile scale 1 L2 orthog} give a lower bound for $\|r_n^J\|_2^2-\|r_n^{J+1}\|_2^2$ whenever $\lim_{n \to \infty} \|\scriptE r_n^J\|_q^q \not\to 0$.  
\end{proof}

For $p \neq 2$, $L^p$ is not a Hilbert space, and the direct analogue of \eqref{E:L2 profile scale 1 L2 orthog} may fail.  Instead, we will prove the $L^p$ almost orthogonality estimates \eqref{E:Lp orthog 1} by defining and bounding a family of vector-valued operators.  Fix a nonnegative, smooth, radial function $\psi$ on $\R^{d+1}$ with compact support in the unit ball and $\int_{\R^d} \psi(\xi',0)\, d\xi' = 1$.  (Since $\psi$ is radial, it thus has integral 1 on every hyperplane through the origin.)  For $0 < r \leq 1$, define $\psi_r(\zeta):=r^{-d}\psi(r^{-1}\zeta)$.  We define
\begin{gather*}
(\pi_r)_n^j f(\omega):=\int \psi_r(\omega-\nu)e^{-ix_n^j\nu}f(\nu)\,d\sigma(\nu),\qquad (\Pi_r)_n^J f:=((\pi_r)_n^j f)_{j=1}^J.
\end{gather*}
We recall $\tilde p:=\max\{p,p'\}$.  

\begin{lemma} \label{L:Lp almost orthog 1}
Let $1 < p \leq \infty$.  Assume that the sequences $\{x_n^j\}_{j,n \in \N}$ obey $\lim_{n \to \infty}|x_n^j-x_n^{j'}| = \infty$ for all $j \neq j'$.  Then the $(\Pi_r)_n^J$ map $L^p$ boundedly into $\ell^{\tilde p}(L^p)$, with operator norms bounded uniformly in $r,n$.  Moreover, 
\begin{equation} \label{E:op norm 1 1}
\lim_{r \to \infty} \lim_{n \to \infty} \|(\Pi_r)_n^J\|_{L^p \to \ell^{\tilde p}(L^p)} = 1.
\end{equation}
Finally, given sequences of functions $f_n = \sum_{j=1}^J e^{ix_n^j \omega}\phi^j + r_n^J$, with $\{f_n\}$ bounded in $L^p$,
satisfying $\phi^j = \wklim e^{-ix_n^j \omega}f_n$, for each $j \in \N$, we have
\begin{gather} \label{E:pinj f phi 1}
\lim_{r \to 0} \lim_{n \to \infty} \|(\pi_r)_n^j f_n - \phi^j\|_{L^p} = 0\\ \label{E:pinj* phi 1}
\lim_{r \to 0} \lim_{n \to \infty} \|[(\pi_r)_n^j]^*\phi^j - e^{ix_n^j\omega}\phi^j\|_{L^p} = 0.
\end{gather}
\end{lemma}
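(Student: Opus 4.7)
The plan is to prove the operator-norm inequality first, by reducing via vector-valued Riesz--Thorin interpolation to three endpoint cases, and then to deduce the pointwise convergences \eqref{E:pinj f phi 1} and \eqref{E:pinj* phi 1} from standard approximate-identity properties on $\S^d$.

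I will factor $(\pi_r)_n^j = K_r \circ M_{x_n^j}$, where $K_r g := \psi_r *_{\S^d} g$ and $M_a g(\omega) := e^{-ia\omega}g(\omega)$, and write $A_j := (\pi_r)_n^j$.  At $p = \infty$, Young's inequality together with the normalization $\int_{\R^d}\psi(\xi',0)\,d\xi' = 1$ and the flatness of $\S^d$ at scale $r$ gives $\|K_r\|_{L^\infty\to L^\infty} \leq 1 + O(r)$, so $\|(\Pi_r)_n^J\|_{L^\infty \to \ell^\infty(L^\infty)} = \sup_j\|A_j\|_{L^\infty\to L^\infty} \leq 1 + O(r)$ uniformly in $n$; the $L^1 \to \ell^\infty(L^1)$ bound follows by duality.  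At $p = 2$, I will apply Schur's test to the $J\times J$ operator-valued matrix $(A_jA_k^*)_{j,k}$ acting on $\ell^2(L^2)$, which realizes $\|(\Pi_r)_n^J\|_{L^2\to\ell^2(L^2)}^2 = \|(\Pi_r)_n^J[(\Pi_r)_n^J]^*\|$.  The diagonal entries $A_jA_j^* = K_r^2$ have norm $(1 + O(r))^2$, while the off-diagonal entries $A_jA_k^* = K_rM_{x_n^j-x_n^k}K_r$ have operator norm tending to zero as $|x_n^j-x_n^k|\to\infty$ for each fixed $r$ (this is the main technical point; see below).  Since $J$ is fixed, summing the $J-1$ off-diagonal entries in each row yields $\sum_k\|A_jA_k^*\| \leq 1 + O(r) + o_{n\to\infty}(1)$, and hence $\|(\Pi_r)_n^J\|_{L^2\to\ell^2(L^2)} \leq 1 + O(r) + o_{n\to\infty}(1)$.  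Complex interpolation along the two segments $(p,\tilde p) = (1,\infty)$--$(2,2)$ and $(2,2)$--$(\infty,\infty)$, on which $\tilde p = \max(p,p')$ is exactly the Riesz--Thorin target index, then delivers both the uniform boundedness in $r,n$ and the limit \eqref{E:op norm 1 1}.

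For \eqref{E:pinj f phi 1}, I rewrite $(\pi_r)_n^jf_n = \psi_r*_{\S^d}(e^{-ix_n^j\cdot}f_n)$.  For fixed $\omega$ and $r > 0$, the function $\nu\mapsto\psi_r(\omega-\nu)$ lies in $L^{p'}(\S^d)$, so the weak convergence $e^{-ix_n^j\cdot}f_n\rightharpoonup\phi^j$ yields pointwise convergence of $(\pi_r)_n^jf_n(\omega)$ to $(\psi_r*_{\S^d}\phi^j)(\omega)$; combined with the uniform $L^p$ bound above, this promotes to $L^p$ convergence.  The subsequent limit $r\to 0$ is the standard mollifier statement on $\S^d$.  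Inequality \eqref{E:pinj* phi 1} is simpler: the adjoint is $[(\pi_r)_n^j]^*g = e^{ix_n^j\cdot}K_rg$, so the displayed difference equals $e^{ix_n^j\cdot}(K_r\phi^j-\phi^j)$, whose $L^p$ norm is $\|K_r\phi^j-\phi^j\|_p\to 0$ as $r\to 0$, independently of $n$.

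\textbf{Main obstacle.}  The core technical point is the off-diagonal estimate $\|K_rM_aK_r\|_{L^2\to L^2}\to 0$ as $|a|\to\infty$ for each fixed $r$.  Its Schwartz kernel
\[
H(\omega,\omega'') = \int_{\S^d}\psi_r(\omega-\omega')\psi_r(\omega'-\omega'')e^{-ia\omega'}\,d\sigma(\omega')
\]
is the Fourier transform at $a$ of the compactly supported smooth density $\psi_r(\omega-\cdot)\psi_r(\cdot-\omega'')\,d\sigma$; stationary phase on $\S^d$ (with stationary points $\omega' = \pm a/|a|$) yields $|H(\omega,\omega'')|\leq C_r(1+|a|)^{-d/2}$ uniformly in $\omega,\omega''$, with rapid decay off a set of vanishing measure via integration by parts against the tangential gradient of $a\cdot\omega'$.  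Since $H$ is supported in $\{|\omega-\omega''|\lesssim r\}$, Schur's test then gives $\|K_rM_aK_r\|_{L^2\to L^2}\lesssim r^d C_r(1+|a|)^{-d/2}\to 0$ as $|a|\to\infty$, which drives all off-diagonal entries of $(A_jA_k^*)_{j,k}$ to zero once $|x_n^j-x_n^k|$ is sufficiently large.
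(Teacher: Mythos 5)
Your argument follows essentially the same route as the paper's: triangle inequality and H\"older at the endpoints $p=1,\infty$, Schur's test on $(\Pi_r)_n^J[(\Pi_r)_n^J]^*$ with stationary-phase decay of the off-diagonal blocks at $p=2$, complex interpolation along the two segments for which $\tilde p$ is the Riesz--Thorin intermediate index, and approximate-identity arguments combined with the weak-limit hypothesis for \eqref{E:pinj f phi 1} and \eqref{E:pinj* phi 1} (the paper phrases the interpolation step in terms of $[(\Pi_r)_n^J]^*$, but the content is identical). One small inaccuracy: a uniform $L^p$ bound on $\{(\pi_r)_n^j f_n\}$ does \emph{not} upgrade pointwise convergence to $L^p$ convergence (consider $n^{1/p}\chi_{[0,1/n]}$); what is needed, and what the paper uses, is the uniform \emph{pointwise} bound $|(\pi_r)_n^j f_n(\omega)| \leq \|\psi_r\|_{L^{p'}(\S^d)}\sup_n\|f_n\|_{L^p(\S^d)}$ together with the finiteness of $\sigma(\S^d)$ and dominated convergence. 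You also do not say anything about the lower bound $\geq 1$ in \eqref{E:op norm 1 1}, which the interpolation/Schur argument does not give; this is however easy, e.g.\ by testing $[(\Pi_r)_n^J]^*$ on the vector $(1,0,\ldots,0)$ as the paper notes.
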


\begin{proof}[Proof of Lemma~\ref{L:Lp almost orthog 1}]
We will be brief.  Verification that the $\psi_r$ approximate the identity is routine; boundedness of the $(\Pi_r)_n^J$ and the limit \eqref{E:pinj* phi 1} immediately follow.  By our weak limit hypothesis on the $\phi^j$ and the dominated convergence theorem (with constant dominating function), for each $r$,
$$
(\pi_r)_n^j f_n(\omega) \to \int \psi_r(\omega-\nu)\phi^j(\nu)\, d\sigma(\nu), \qtq{in $L^p$, as $n \to \infty$.}
$$
Equation \eqref{E:pinj f phi 1} follows immediately.

We will verify the dual form of \eqref{E:op norm 1 1}, namely, that 
$$
\lim_{r \to 0} \lim_{n \to \infty} \|[(\Pi_r)_n^J]^*\|_{\ell^{\tilde p'}(L^p) \to L^p} \to 1, \qquad 1 \leq p \leq \infty.
$$
For the convenience of the reader, we record
$$
[(\Pi_r)_n^J]^*(\phi^j)(\nu) = \sum_{j=1}^J e^{ix_n^j \nu}\int \psi_r(\nu-\omega)\phi^j(\omega)\, d\sigma(\omega).
$$
That the limit of the operator norms is bounded below by 1 is elementary, as can be seen by applying the $[(\Pi_r)_n^J]^*$ to the vector-valued constant function $(1,0,\ldots,0)$.  In the cases $p=1,\infty$, the upper bound is a direct consequence of the triangle inequality, H\"older's inequality, and 
$$
\int \psi_r(\omega-\nu)\,d\sigma(\nu) = \int \psi_r(e_1-\nu)\,d\sigma(\nu) \to 1, \qtq{for any} \omega \in \S^d.
$$
By interpolation, it remains to verify the $p=2$ case, for which it suffices to prove that $\|(\Pi_r)_n^J[(\Pi_r)_n^J]^*\|_{\ell^2(L^2) \to \ell^2(L^2)} \to 1$.  We expand
\begin{align*}
\|(\Pi_r)_n^J[(\Pi_r)_n^J]^*(\phi^j)_{j \leq J}\|_{\ell^2(L^2)}^2 
=\sum_j \|\sum_{k=1}^J \int\phi^k(\vartheta) (K_r)^{jk}_n(\vartheta,\omega)\,d\sigma(\vartheta)\|_{L^2_\omega}^2,
\end{align*}
where
$$
(K_r)^{jk}_n(\vartheta,\omega) = \int \psi_r(\omega-\nu)\psi_r(\nu-\vartheta)e^{i(-x_n^j+x_n^k)\nu}d\sigma(\nu).
$$
Let 
$$
(A_r)^{jk}_n := \|(K_r)^{jk}_n\|_{L^\infty_\vartheta L^1_\omega} \|(K_r)^{jk}_n\|_{L^\infty_\omega L^1_\vartheta}.
$$
When $j \neq k$ and $r > 0$, $(K_r)^{jk}_n \to 0$ uniformly as $n \to \infty$ by stationary phase and $|x_n^j-x_n^k| \to \infty$, so the off-diagonal terms satisfy $(A_r)^{jk}_n \to 0$ as $n \to \infty$.  By construction of $\psi_r$, $(A_r)^{jj}_n$ (which is independent of $n$) tends to 1 as $r \to 0$.  By the elementary inequality
$$
|\sum_{j=1}^J x_j|^2 \leq (1+\eps)x_1^2 + C_{\eps,J}\sum_{j=2}^J x_j^2
$$
and Schur's test,
$$
\|\sum_{k=1}^J \int\phi^k(\vartheta) (K_r)_n^{jk}(\vartheta,\omega)\,d\sigma(\vartheta)\|_{L^2_\omega}^2
\leq (1+\eps)(A_r)_n^{jj}\|\phi^j\|_2^2 + C_{\eps,J} \sum_{k \neq j} (A_r)_n^{jk}\|\phi^k\|_2^2,
$$
and \eqref{E:op norm 1 1} follows.  
\end{proof}

\begin{lemma} \label{L:decoup 1}
Under the hypotheses of Theorem~\ref{T:space1}, suppose that we are given sequences $\{x_n^j\}_{j,n \in \N}$ with $|x_n^j-x_n^{j'}| \to \infty$ for $j \neq j'$ and $\{f_n\}$ such that the weak limits 
$$
\phi^j:=\wklim e^{-ix_n^j\omega}f_n
$$
exist.  Define
$$
r_n^J:=f_n-\sum_{j=1}^J e^{ix_n^j\omega}f_n.
$$
Then \eqref{E:Lp orthog 1} and \eqref{E:Lq orthog 1} both hold.  
\end{lemma}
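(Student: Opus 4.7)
The plan is to handle \eqref{E:Lp orthog 1} and \eqref{E:Lq orthog 1} by distinct mechanisms: the $L^p$ almost orthogonality via the vector-valued operators of Lemma~\ref{L:Lp almost orthog 1}, and the $L^q$ Pythagorean-type identity by an iterated Br\'ezis--Lieb argument after translating to isolate each profile.

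For the first inequality in \eqref{E:Lp orthog 1}, I apply $(\Pi_r)_n^J$ to $f_n$ to obtain
$$
\bigl\|\bigl((\pi_r)_n^j f_n\bigr)_{j\le J}\bigr\|_{\ell^{\tilde p}(L^p)} \le \|(\Pi_r)_n^J\|_{L^p \to \ell^{\tilde p}(L^p)}\,\|f_n\|_{L^p}.
$$
By \eqref{E:pinj f phi 1}, each coordinate $(\pi_r)_n^j f_n$ tends to $\phi^j$ in $L^p$ as $n\to\infty$ followed by $r\to0$, and by \eqref{E:op norm 1 1} the operator norm on the right tends to $1$; passing to these double limits yields $\bigl(\sum_{j=1}^J \|\phi^j\|_p^{\tilde p}\bigr)^{1/\tilde p} \le \liminf_n \|f_n\|_p$, and sending $J \to \infty$ gives the claim. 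The second inequality is obtained dually: apply $[(\Pi_r)_n^J]^*$ to the vector $(\phi^j)_{j\le J} \in \ell^{\tilde p'}(L^p)$. By \eqref{E:pinj* phi 1} the output converges in $L^p$ to $\sum_{j=1}^J e^{ix_n^j\omega}\phi^j$, while the operator norm of the adjoint tends to $1$ by the dual form of \eqref{E:op norm 1 1}, giving the required upper bound.

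For \eqref{E:Lq orthog 1}, I proceed by induction on $J$, with the case $J=0$ trivial. The inductive step amounts to
$$
\lim_{n \to \infty}\Bigl(\|\scriptE r_n^{J-1}\|_q^q - \|\scriptE r_n^J\|_q^q\Bigr) = \|\scriptE \phi^J\|_q^q.
$$
Set $g_n := \scriptE(e^{-ix_n^J\omega}r_n^{J-1})$, which is a spatial translate of $\scriptE r_n^{J-1}$ and hence has the same $L^q$ norm; a direct computation using $r_n^J = r_n^{J-1} - e^{ix_n^J\omega}\phi^J$ gives $g_n - \scriptE\phi^J = \scriptE(e^{-ix_n^J\omega}r_n^J)$, a translate of $\scriptE r_n^J$. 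I claim $g_n(x) \to \scriptE \phi^J(x)$ pointwise, which reduces to showing $e^{-ix_n^J \omega}r_n^{J-1} \rightharpoonup \phi^J$ in $L^p(\S^d)$. Expanding $r_n^{J-1} = f_n - \sum_{j<J} e^{ix_n^j\omega}\phi^j$ and using the hypothesis $e^{-ix_n^J\omega}f_n \rightharpoonup \phi^J$, this further reduces to the assertion that $e^{i(x_n^j - x_n^J)\omega}\phi^j \rightharpoonup 0$ in $L^p(\S^d)$ for each fixed $j<J$ (the main technical step; see below). Granted this, $\{g_n\}$ is bounded in $L^q$ and converges pointwise to $\scriptE \phi^J$, so the Br\'ezis--Lieb lemma yields $\lim_n \|g_n\|_q^q - \|g_n - \scriptE \phi^J\|_q^q = \|\scriptE \phi^J\|_q^q$, which is exactly the inductive step.

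The main obstacle is thus showing $e^{i\xi_n\omega}\phi^j \rightharpoonup 0$ in $L^p(\S^d)$ whenever $|\xi_n| \to \infty$ in $\R^{d+1}$. Testing against $\psi \in L^{p'}(\S^d)$, the pairing is $\widehat{\phi^j\psi\,d\sigma}(-\xi_n)$, so it suffices that the Fourier transform of an $L^1$ density on $\S^d$ vanishes at infinity in $\R^{d+1}$. A standard density argument reduces this to smooth densities, and for $h \in C^\infty(\S^d)$ the decay of $\int h\, e^{i\xi\omega}\,d\sigma$ as $|\xi|\to\infty$ is a classical consequence of stationary phase, since the phase $\omega \mapsto \xi\omega$ on $\S^d$ has only nondegenerate critical points at $\pm\xi/|\xi|$.
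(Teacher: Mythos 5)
Your proposal is correct and follows essentially the same route as the paper: the $L^p$ almost-orthogonality via Lemma~\ref{L:Lp almost orthog 1} (operator norm limit and the identifications \eqref{E:pinj f phi 1}, \eqref{E:pinj* phi 1}), and the $L^q$ identity by iterating Br\'ezis--Lieb after translating to isolate $\phi^J$ at the origin. The only difference is that you spell out, via stationary phase and density, the fact that $e^{i\xi_n\omega}\phi^j \rightharpoonup 0$ in $L^p(\S^d)$ when $|\xi_n|\to\infty$ — a step the paper invokes without proof — which is a worthwhile clarification but not a change of method.
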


\begin{proof}[Proof of Lemma~\ref{L:decoup 1}]
The inequalities in \eqref{E:Lp orthog 1} follow directly from Lemma~\ref{L:Lp almost orthog 1} and the definition of the $r_n^J$.  We will use the Brezis--Lieb lemma to prove \eqref{E:Lq orthog 1}.  Set $r_n^0:=f_n$.  By hypothesis, for $j \neq j'$, $\wklim e^{i(x_n^j-x_n^{j'})}\phi^j = 0$.  Therefore $\phi^j = \wklim e^{-ix_n^j}r_n^{j-1}$, for $j\geq 1$.  Applying the extension, $\scriptE(e^{-ix_n^j\omega}r_n^{j-1}-\phi^j) \to 0$, pointwise, and so by the Brezis--Lieb lemma, 
$$
\lim_{n \to \infty} \|\scriptE(e^{-ix_n^j\omega}r_n^{j-1})\|_q^q - \|\scriptE \phi^j\|_q^q - \|\scriptE (e^{-ix_n^j\omega}r_n^{j-1}-\phi^j)\|_q^q = 0.  
$$
Summing the preceding identity over $j=1,\ldots,J$ and using $r_n^j = r_n^{j-1}-e^{ix_n^j\omega}\phi^j$ establishes \eqref{E:Lq orthog 1}.  
\end{proof}

With the above lemmas in place, we are now ready to complete the proof of Theorem~\ref{T:space1}.  

\begin{proof}[Proof of Theorem~\ref{T:space1}]
We may assume that $\limsup \|f_n\|_p = 1$.  When $p=2$, the conclusions of Lemma~\ref{L:L2 profile scale 1} are stronger than we need, so it suffices to consider pairs $(p,q)$ meeting the hypotheses of our theorem in the case $p \neq 2$.  In light of Lemma~\ref{L:decoup 1}, it suffices to prove that there exist $\{x_n^j\}_{j,n \in \N}$ obeying \eqref{E:xnj-xnj' scale 1} such that the resulting remainder terms $r_n^J$ have small extension, i.e.,\ that  \eqref{E:ErnJ 0 1} holds.  

Given $M \in \N$, we set $f_n^M:=f_n\chi_{\{|f_n| \leq M\}}$.  Let $\eps>0$ and take $M_\eps$ sufficiently large that 
$$
\limsup_{n \to \infty} \|\scriptE (f_n-f_n^M)\|_q < \eps,  
$$
when $M\geq M_\eps$.  

The advantage of working with the truncation $\{f_n^M\}$ is that it forms a bounded sequence in every Lebesgue space (albeit with a bad, $M$-dependent, bound), putting us in a position to apply  Lemma~\ref{L:L2 profile scale 1}.  To this end, set $q_1:=\tfrac{2(d+2)}d$ and choose an exponent pair $(p_0,q_0)$ meeting the hypotheses on $(p,q)$ from Theorem~\ref{T:space1}, as well as the condition
$$
(\tfrac1p,\tfrac1q) = (1-\theta)(\tfrac1{p_0},\tfrac1{q_0}) + \theta(\tfrac12,\tfrac1{p_1}),
$$
for some $0 < \theta < 1$.  

By Lemma~\ref{L:L2 profile scale 1}, after passage to subsequence (independent of $M$ by standard diagonalization arguments), there exist points $\{x_n^{M,j}\}_{j,n \in \N}$ and weak limits $\phi^{M,j}$ such that \eqref{E:L2 profile scale 1 L2 orthog}, \eqref{E:L2 profile scale 1 Lq orthog}, and \eqref{E:L2 profile scale 1 small err} all hold, with the superscript $M$ inserted where appropriate.  Since $e^{-ix_n^{M,j}\omega}f_n^M \rightharpoonup \phi^{M,j}$ weakly in both $L^p$ and in $L^{p_0}$, we may also apply Lemma~\ref{L:decoup 1} with exponents $(p,q)$ and $(p_0,q_0)$.  By \eqref{E:Lq orthog 1}, 
$$
\limsup_{n \to \infty} \|\scriptE r_n^{M,J}\|_{q_0} \leq \limsup_{n \to \infty}\|\scriptE f_n^M\|_{q_0} \lesssim \limsup_{n \to \infty}\|f_n^M\|_{p_0} \lesssim_M 1,
$$ 
for all $M,J$.  Therefore, by H\"older's inequality and \eqref{E:L2 profile scale 1 small err}, 
\begin{equation} \label{E:rnJM 0 1}
\lim_{J \to \infty} \limsup_{n \to \infty} \|\scriptE r_n^{M,J}\|_q = 0, \qtq{for all $M$.}  
\end{equation}

To conclude, we need to remove the dependence on $M$ in \eqref{E:rnJM 0 1}.  We begin by showing that non-negligible profiles $\scriptE e^{ix_n^{j,M}\omega}\phi^{j,M}$ cannot wander around too much as $M$ varies.  

After passing to a subsequence, we may assume that for any $M,j$ and $M',j'$, either $\lim_{n \to \infty}|x_n^{M,j}-x_n^{M',j'}| =\infty$ or $x_n^{M,j}-x_n^{M',j'}$ converges in $\R^{d+1}$, as $n \to \infty$.  In fact, in the latter case, we may assume that $x_n^{M,j} \equiv x_n^{M',j'}$, simply by modulating our $\phi^{M,j}$ as needed.   

Now, let $C$ be a sufficiently large constant, and suppose that (after reordering the $\phi^{j,M}$ and perhaps inserting some zero profiles) we had distinct sequences $\{x_n^j\}$, $1 \leq j \leq J_\eps:=C\eps^{-q}$ such that $x_n^{M,j} \equiv x_n^j$ for some $M \geq M_\eps$ with $\|\scriptE \phi^{M,j}\|_q > 2\eps$.  By construction, $|x_n^j-x_n^{j'}| \to \infty$ whenever $j \neq j'$.  Passing to a subsequence, we have weak limits $e^{-ix_n^j\omega}f_n \rightharpoonup \phi^j$, weakly in $L^p$, for each $1 \leq j \leq J_\eps$.  By Fatou, 
$$
\|\scriptE \phi^j-\scriptE \phi^{M,j}\|_q \leq \limsup_{n \to \infty} \|\scriptE f_n-\scriptE f_n^M\|_q < \eps.
$$
Therefore $\|\scriptE \phi^j\|_q > \eps$, $1 \leq j \leq J_\eps$.  By Lemma~\ref{L:decoup 1} (namely, inequality \eqref{E:Lq orthog 1}), 
$$
J_\eps \eps^q < \sum_{j=1}^{J_\eps}\|\scriptE \phi^j\|_q^q \leq \lim_{n \to \infty} \|\scriptE f_n\|_q^q \lesssim 1,
$$
a contradiction.  Thus, after reordering, $x_n^{M,j} \equiv x_n^j$ whenever $M \geq M_\eps$, $j \leq J_\eps$ and 
\begin{equation} \label{E:phiMj small 1}
\|\scriptE \phi^{M,j}\|_q < \eps,
\qtq{whenever $M \geq M_\eps$ and $j > J_\eps$.}
\end{equation}

Inequality \eqref{E:phiMj small 1} will give us uniform control on the extensions of the $r_n^{M,J_\eps}$.  Recalling that $\tilde p = \max\{p,p'\}<q$,  choose, for each $M \geq M_\eps$, some $J_{\eps,M}$ sufficiently large that $\|\scriptE r_n^{M,J_{\eps,M}}\|_q < \eps^{q-\tilde p}$.  We will show that $\|\scriptE(r_n^{M,J_\eps}-r_n^{M,J_{\eps,M}})\|_q \lesssim \eps^{q-\tilde p}$.  Noting that $$
r_n^{M,J_\eps}-r_n^{M,J_{\eps,M}} = \sum_{J_\eps<j<J_{\eps,M}} e^{ix_n^{M,j}\omega}\phi^{M,j},
$$
we apply Lemma~\ref{L:decoup 1} and H\"older's inequality to obtain
\begin{align*}
    &\limsup_{n \to \infty} \, \bigl\|\sum_{J_\eps<j<J_{\eps,M}} \scriptE e^{ix_n^{M,j}\omega}\phi^{M,j}\bigr\|_q 
    \leq 
    \sum_{J_\eps<j<J_{\eps,M}} \|\scriptE \phi^{M,j}\|_q^q 
    \\&\qquad\leq \eps^{q-\tilde p} \sum_{J_\eps<j<J_{\eps,M}} \|\scriptE \phi^{M,j}\|_q^{\tilde p}
    \leq \eps^{q-\tilde p} A_{p \to q}^{\tilde p} \sum_{J_\eps<j<J_{\eps,M}} \| \phi^{M,j}\|_p^{\tilde p}
    \\&\qquad\leq \eps^{q-\tilde p} A_{p \to q}^{\tilde p} \limsup\|f_n\|_p^{\tilde p} \lesssim \eps^{q-\tilde p}.
\end{align*}

It remains to transfer the bound $\|\scriptE r_n^{M,J_\eps}\|_q \lesssim \eps^{q-\tilde p}$ to $\scriptE r_n^{J_\eps}$.  Let $1 \leq j \leq J_\eps$.  By Fatou and our  assumption, 
$$
\lim_{M \to \infty} \|\scriptE (\phi^j-\phi^{M,j})\|_q \leq \lim_{M \to \infty} \limsup_{n \to \infty} \|\scriptE(f_n-f_n^M)\|_q = 0.
$$
Hence by the triangle inequality, $$
\lim_{M \to \infty} \|\scriptE(r_n^{J_\eps}-r_n^{M,J_\eps})\|_q = 0,
$$
and so we have the desired inequality $\|\scriptE r_n^{J_\eps}\|_q < \eps$, completing the proof of Theorem~\ref{T:space1}.
\end{proof}

\section{Large scale spatial decomposition:  Proof of Theorem~\ref{T:large space}} \label{S:concentrating profiles}

We begin by recording the connection between the spherical and parabolic extension operators at small frequency scales.  

\begin{lemma} \label{L:sphere to parab}
Let $1<p<\tfrac{2(d+1)}d$ and $q=\tfrac{d+2}dp'$.  Assume that the restriction conjecture for $\scriptE$ holds on a neighborhood of $(p,q)$. Let $\lambda_n \searrow 0$ and $\phi \in L^p(\R^d)$.  Define
\begin{equation} \label{E:def gn}
g_n (\omega):=   \lambda_n^{-d/p}\phi(\lambda_n^{-1}\omega')\chi_{\{\omega_1>0\}}\chi_{\{|\omega'|<\tfrac12\}}.
\end{equation}
Then 
\begin{equation} \label{E:sphere to parab}
\lim_{n\to \infty} \| \scriptE g_n - \lambda_n^{\frac{d+2}q}e^{ ix_1}\scriptE_\P \phi(-\lambda_n^2x_1,\lambda_n x') \|_{L^q} = 0.
\end{equation}
\end{lemma}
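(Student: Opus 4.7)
My plan is to reduce \eqref{E:sphere to parab} to an $L^q$ statement on $\R^{d+1}$ via explicit changes of variables, then prove the convergence for smooth compactly supported $\phi$ by combining pointwise convergence with oscillatory-integral decay estimates, and finally extend to arbitrary $\phi \in L^p(\R^d)$ by density. Parameterizing the upper hemisphere as $\omega=(\sqrt{1-|\omega'|^2},\omega')$ with $d\sigma = d\omega'/\sqrt{1-|\omega'|^2}$ and substituting $\omega'=\lambda_n\xi$ in the definition of $\scriptE g_n$, together with the identity $d/p' = (d+2)/q$ arising from $q=(d+2)p'/d$, one obtains
\[
\scriptE g_n(x) = \lambda_n^{d/p'}e^{ix_1}\int_{|\xi|<\frac1{2\lambda_n}} e^{ix_1(\sqrt{1-\lambda_n^2|\xi|^2}-1)+i\lambda_n x'\cdot\xi}\frac{\phi(\xi)}{\sqrt{1-\lambda_n^2|\xi|^2}}\,d\xi.
\]
Rescaling $(y_1,y'):=(-\lambda_n^2 x_1,\lambda_n x')$ preserves $L^q$ norms, since $\lambda_n^{dq/p'}\lambda_n^{-(d+2)}=1$, and converts \eqref{E:sphere to parab} into the claim $\|H_n\|_{L^q(\R^{d+1})}\to 0$, where $H_n := \scriptE_n\Phi_n - \scriptE_\P\phi$, $\scriptE_n\Phi_n(y):=\int e^{iy'\cdot\xi+iy_1\mu_n(\xi)}\Phi_n(\xi)\,d\xi$, $\mu_n(\xi):=\lambda_n^{-2}(1-\sqrt{1-\lambda_n^2|\xi|^2})=\tfrac12|\xi|^2+O(\lambda_n^2|\xi|^4)$, and $\Phi_n:=\phi\chi_{\{|\xi|<1/(2\lambda_n)\}}/\sqrt{1-\lambda_n^2|\xi|^2}$.

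For $\phi\in C_c^\infty(B(0,R))$ and $n$ large enough that $\lambda_n R < 1/2$, the cutoff in $\Phi_n$ becomes inactive and $(\mu_n,\Phi_n)\to(\tfrac12|\cdot|^2,\phi)$ smoothly on $B(0,R)$. Dominated convergence in the $\xi$-integral gives $H_n(y)\to 0$ pointwise. To upgrade to $L^q$ I would extract a uniform-in-$n$ dominating function via oscillatory-integral decay. The Hessian of $\mu_n$ on $B(0,R)$ is $I+O(\lambda_n^2 R^2)$, hence uniformly positive definite for large $n$. Non-stationary phase (integration by parts in $\xi$) yields $|\scriptE_n\Phi_n(y)|\lesssim_{\phi,N}(1+|y|)^{-N}$ outside the region $|y'|\lesssim R|y_1|$, while stationary phase yields $|\scriptE_n\Phi_n(y)|\lesssim_\phi |y_1|^{-d/2}$ inside. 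The resulting dominating function lies in $L^q(\R^{d+1})$ precisely because $q>2(d+1)/d$, which follows from the hypothesis $p<2(d+1)/d$. The same estimates apply to $\scriptE_\P\phi$, and dominated convergence then gives $\|H_n\|_q\to 0$ for $\phi\in C_c^\infty(\R^d)$.

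Finally, I would extend to arbitrary $\phi\in L^p$ by density. The spherical extension hypothesis gives the $n$-uniform bound $\|\scriptE g_n\|_q\leq S_{p\to q}\|g_n\|_p\lesssim\|\phi\|_p$, while the rescaling identity $\|\lambda_n^{(d+2)/q}e^{ix_1}\scriptE_\P\psi(-\lambda_n^2 x_1,\lambda_n x')\|_q = \|\scriptE_\P\psi\|_q$ holds for every $\psi$. Combining these with the convergence already established for $\phi\in C_c^\infty$, one first deduces $\|\scriptE_\P\phi\|_q\lesssim\|\phi\|_p$ for $\phi\in C_c^\infty$, which bootstraps by density to $P_{p\to q}<\infty$. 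Given $\phi\in L^p$ and $\eps>0$, approximating $\phi$ by $\phi^\eps\in C_c^\infty$ with $\|\phi-\phi^\eps\|_p<\eps$ and using the uniform $L^p\to L^q$ bounds on both operators to control the error from $\phi-\phi^\eps$, together with the convergence for $\phi^\eps$, completes the proof.

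The hard part will be the second paragraph: producing the uniform-in-$n$ integrable dominating function. The triangle-inequality bound $|H_n(y)|\leq 2\|\phi\|_1$ is slab-like and not in $L^q(\R^{d+1})$, so genuine oscillatory decay is necessary. Uniformity in $n$ hinges on the fact that $\mu_n\to\tfrac12|\cdot|^2$ in $C^\infty(B(0,R))$ with uniform Hessian lower bounds for large $n$, which makes the non-stationary and stationary phase estimates hold uniformly in $n$.
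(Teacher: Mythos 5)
Your proposal is correct and follows essentially the same route as the paper: the same change of variables reducing to a parabolic-type oscillatory integral, pointwise convergence plus a stationary-phase dominating function in $L^q$ for $\phi\in C_c^\infty$ (using $q>2(d+1)/d$), and a density argument after bootstrapping $P_{p\to q}<\infty$ from the spherical bound. Your treatment is slightly more explicit than the paper's one-line "by stationary phase, $|G_n(x)|\lesssim_\phi\jp{x}^{-d/2}$" in that you separate the stationary and non-stationary cones and track the Jacobian factor $(1-\lambda_n^2|\xi|^2)^{-1/2}$, but this is the same argument.
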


Lemma~\ref{L:sphere to parab} is proved in the case $p=2$ in \cite{FLS}; we make the simple adaptation here to the case of general $p$ (for which we make no \textit{a priori} assumption of boundedness of $\scriptE_\P$) for the convenience of the reader.

\begin{proof}[Proof of Lemma~\ref{L:sphere to parab}]
It suffices to prove that 
$$
\lim_{n\to \infty} \| \lambda_n^{-\frac{d+2}q} e^{i\lambda_n^{-2}x_1} \scriptE g_n(-\lambda_n^{-2}x_1,\lambda_n^{-1}x') - \scriptE_\P \phi \|_{L^q} = 0,
$$
and we assume initially that $\phi \in C^\infty_{cpct}$; therefore, the $L^q$ norms in the above limit are finite for each $n$ by stationary phase.  Set
$$
G_n(x):= \lambda_n^{-\frac{d+2}q} e^{i\lambda_n^{-2}x_1} \scriptE g_n(-\lambda_n^{-2}x_1,\lambda_n^{-1}x'). 
$$
After a change of variables, we see that for sufficiently large $n$,
$$
G_n(x) = \int e^{i(-x_1,x')(\lambda_n^{-2}[\sqrt{1-|\lambda_n\xi|^2}-1],\xi)}\phi(\xi)\, d\xi.
$$
Examining the phase function, $G_n \to \scriptE_\P \phi$, pointwise.  Moreover, by stationary phase, $|G_n(x)| \lesssim_\phi \jp{x}^{-\frac d2}$.  Therefore by dominated convergence, $G_n \to \scriptE \phi$ in $L^q$.  

From our assumption, having proved the lemma in the case of $C^\infty_{cpct}$ functions implies, in addition, that $\|\scriptE_\P \phi\|_q \lesssim \|\phi\|_p$ for $\phi \in C^\infty_{cpct}$.  Therefore $\scriptE_\P$ extends as a bounded linear operator from $L^p$ to $L^q$, and we may conclude that the lemma also holds for general $L^p$ functions by standard approximation arguments.  
\end{proof}

Next, we isolate a nonzero weak limit in bounded, concentrating sequences with nonnegligible extensions.  

\begin{lemma}\label{L:inv rest large scale}
Let $1<p<\tfrac{2(d+1)}d$ and $q=\tfrac{d+2}dp'$.  Assume that the restriction conjecture for $\scriptE$ holds on a neighborhood of $(p,q)$.  Let $\lambda_n \searrow 0$ and assume that
\begin{equation} \label{E:Efn>M large space +}
\lim_{M \to \infty} \limsup_{n \to \infty} \|\scriptE f_n\chi_{\{|f_n|>M\lambda_n^{-d/p}\} \cup \{|\omega-e_1|>M\lambda_n\}}\|_q = 0,
\end{equation}
for some sequence $\{f_n\} \subseteq L^p(\S^d)$, with $\limsup\|f_n\|_p \leq A$ and $\liminf\|\scriptE f_n\|_q \geq B > 0$.  After passing to a subsequence, there exists $\{x_n\} \subseteq \R^{d+1}$ such that
$$
\lambda_n^{d/p} e^{-ix_n(\sqrt{1-|\lambda_n\xi|^2},\lambda_n \xi)}f_n(\sqrt{1-|\lambda_n\xi|^2},\lambda_n\xi) \chi_{\{|\xi| < \frac12 \lambda_n^{-1}\}} \rightharpoonup \phi,
$$
weakly in $L^p(\R^d)$, with $\|\scriptE_\P \phi\|_q \gtrsim B(\tfrac B A)^C$.    
\end{lemma}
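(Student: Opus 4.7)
The plan is to adapt the critical-case argument of Lemma~\ref{L:inv rest scale 1} to the concentrating scale: I truncate so the sequence is bounded and compactly supported near $e_1$, apply Lemma~\ref{L:scalable} to extract a near-$L^\infty$-extremal spherical cap of scale $\sim\lambda_n$, modulate to place its extension's maximum at the origin, and extract a nonzero weak limit in $L^p(\R^d)$ after rescaling. Lemma~\ref{L:sphere to parab} transfers the resulting $L^\infty$-cap bound from the sphere to the paraboloid, and a Young-type inequality then supplies the lower bound on $\|\scriptE_\P\phi\|_q$.

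In detail, fix $\eps\in(0,B/2)$ and use \eqref{E:Efn>M large space +} to pick $M$ so large that the truncation $f_n^M := f_n\chi_{\{|f_n|\leq M\lambda_n^{-d/p}\}}\chi_{\{|\omega-e_1|\leq M\lambda_n\}}$ satisfies $\liminf_n\|\scriptE f_n^M\|_q\geq B-\eps$. Applying Lemma~\ref{L:scalable} to $f_n^M$ yields a dyadic cap $\tau_n\in\scriptD$ with
\[
|\tau_n|^{-1/p'}\|\scriptE(f_n^M)_{\tau_n}\|_\infty\gtrsim(B-\eps)\bigl((B-\eps)/A\bigr)^{(1-\theta)/\theta}.
\]
The pointwise bound $|f_n^M|\leq M\lambda_n^{-d/p}$ combined with the crude estimate $\|\scriptE f_\tau\|_\infty\leq |\tau|\|f\|_\infty$ forces $|\tau_n|\gtrsim\lambda_n^d$, and the support bound $\supp f_n^M\subseteq\{|\omega-e_1|\leq M\lambda_n\}$ with H\"older in the form $\|\scriptE f_\tau\|_\infty\leq|\tau\cap\supp f|^{1/p'}\|f\|_p$ forces $|\tau_n|\lesssim(M\lambda_n)^d$; thus $\mathrm{sidelength}(\tau_n)\sim\lambda_n$ and the center $\omega_n^*$ satisfies $|\omega_n^*-e_1|\lesssim M\lambda_n$. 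Choose $x_n\in\R^{d+1}$ with $|\scriptE(f_n^M)_{\tau_n}(-x_n)|\geq\tfrac12\|\scriptE(f_n^M)_{\tau_n}\|_\infty$, and pass to a subsequence so that $\lambda_n^{-1}\omega_n^{*\prime}$ converges, the dyadic scale of $\tau_n$ stabilizes (rescaled to $\sim 1$), and the profile $\tilde F_n$ from the conclusion of the lemma converges weakly in $L^p(\R^d)$ to some $\phi$ with $\|\phi\|_p\leq A$.

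It remains to show $\|\scriptE_\P\phi\|_q\gtrsim B(B/A)^C$. Let $\tilde F_n^M$ be built from $f_n^M$ in the same way as $\tilde F_n$ from $f_n$; this sequence is uniformly bounded and supported in a fixed compact set in $\R^d$, so the proof of Lemma~\ref{L:sphere to parab} (which relies only on pointwise convergence of the phase plus a uniform $\jp{x}^{-d/2}$ stationary-phase dominator) applies uniformly. Using the critical scaling identity $d/p'=(d+2)/q$, the cap bound above translates to $|\scriptE_\P(\tilde F_n^M)_{\tau_n^*}(0)|\gtrsim B(B/A)^C$, where $\tau_n^*\subset\R^d$ is the rescaled cap; passing to a further subsequence, $\tau_n^*$ stabilizes to a fixed bounded cap $\tau^*$ and $\tilde F_n^M\rightharpoonup\phi^M$ weakly in $L^p(\R^d)$. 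Pointwise convergence of parabolic extensions (the test functions $e^{iy\cdot(\frac12|\xi|^2,\xi)}\chi_{\tau^*}$ are compactly supported, hence in $L^{p'}$) yields $|\scriptE_\P\phi^M_{\tau^*}(0)|\gtrsim B(B/A)^C$, after which the Young's inequality step $\|\scriptE_\P g_{\tau^*}\|_\infty\lesssim\|\widehat{\chi_{Q_{\tau^*}}}\|_{q'}\|\scriptE_\P g\|_q$ (exactly as used in the proof of Lemma~\ref{L:inv rest scale 1}) gives $\|\scriptE_\P\phi^M\|_q\gtrsim B(B/A)^C$. Finally, the remainder $\tilde F_n-\tilde F_n^M$ satisfies $\|\scriptE_\P(\phi-\phi^M)\|_q\lesssim\eps$ via Fatou applied to the rescaled version of $\|\scriptE(f_n-f_n^M)\|_q<\eps$, yielding the desired bound on $\|\scriptE_\P\phi\|_q$ upon sending $\eps\to 0$.

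The main technical obstacle is the rescaling bookkeeping: ensuring that Lemma~\ref{L:sphere to parab} applies uniformly to the $n$-dependent sequence $\tilde F_n^M$, which holds because of its uniform $L^\infty$ and compact-support bounds, and verifying that the cap bound from Lemma~\ref{L:scalable} rescales cleanly to the parabolic picture with the limiting cap $\tau^*$ centered at $\lim\lambda_n^{-1}\omega_n^{*\prime}$ rather than at the origin. This off-center shift does not affect the Young's inequality step, since $\tau^*$ is still a fixed bounded set.
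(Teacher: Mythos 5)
Your proposal follows the paper's own proof quite closely: truncate via \eqref{E:Efn>M large space +}, invoke Lemma~\ref{L:scalable} to produce caps $\tau_n$, use the pointwise bound $|f_n^M|\le M\lambda_n^{-d/p}$ together with the support constraint to pin down $|\tau_n|\sim\lambda_n^d$, modulate so the cap extension peaks near the origin, extract weak limits of the rescaled truncated and full sequences, and combine a Young/convolution step with Fatou and the triangle inequality. That is the paper's argument, and the two computations forcing $|\tau_n|\sim\lambda_n^d$ are exactly the paper's.

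One step you should tighten: the Fatou estimate for the remainder. You assert $\|\scriptE_\P(\phi-\phi^M)\|_q\lesssim\eps$ directly ``via Fatou applied to the rescaled version of $\|\scriptE(f_n-f_n^M)\|_q<\eps$.'' But $\phi-\phi^M$ is only a weak $L^p(\R^d)$ limit of $\tilde F_n-\tilde F_n^M$, and this sequence is neither uniformly bounded nor compactly supported, so weak convergence does not supply the pointwise a.e.\ convergence of the rescaled spherical extensions that Fatou requires --- the test functions $e^{ix\cdot(\frac12|\xi|^2,\xi)}$ are not in $L^{p'}(\R^d)$, and for $p\neq 2$ there is no Plancherel shortcut. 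The paper sidesteps this by keeping \emph{all} of the pieces, $\phi^g$, $\phi^b$, and $\phi$, restricted to the fixed rescaled cap $\tau$: then the test functions $e^{ix\cdot(\frac12|\xi|^2,\xi)}\chi_{\tau}$ lie in $L^{p'}$, the pointwise limits are legitimate, Fatou applies, and the triangle inequality is done \emph{before} removing the cap via $L^q$ boundedness of Fourier multiplication by $\chi_\tau$. Your argument removes the cap from $\phi^M$ first (using the Young step, which is fine in isolation) and only then subtracts, which forces Fatou on the un-capped remainder; you should either keep the cap throughout as the paper does, or justify the pointwise convergence of $\scriptE_\P(\tilde F_n-\tilde F_n^M)$ independently (which, for a concentrating $L^p$ sequence with $p\neq2$, is not automatic). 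A related, smaller imprecision is the claim that the proof of Lemma~\ref{L:sphere to parab} ``applies uniformly'' to the $n$-dependent family $\tilde F_n^M$ with a uniform $\jp{x}^{-d/2}$ dominator: that stationary-phase decay requires more regularity than uniform boundedness and compact support. Fortunately you do not actually need the full $L^q$ convergence of Lemma~\ref{L:sphere to parab} here --- the single pointwise evaluation at $0$ suffices and follows from a change of variables together with $\sqrt{1-|\lambda_n\xi|^2}\to 1$ --- so this can be stated more directly.
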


\begin{proof}[Proof of Lemma~\ref{L:inv rest large scale}]
Given $M \in \N$, we set
$$
f_n^{>M}:= f_n\chi_{\{|f_n|>M\lambda_n^{-d/p}\} \cup \{|\omega-e_1|>M\lambda_n\}}, \qtq{and} f_n^M:=f_n-f_n^{>M}.
$$
Let $\eps>0$ sufficiently small for later purposes.  By hypothesis, there exists $M:=M_\eps\in \N$ such that, after passing to a subsequence,
\begin{equation} \label{E:Efn>M small}
\|\scriptE f_n^{>M}\|_q <\eps, \qtq{for all $n$.}
\end{equation}
As long as $\eps<\tfrac B2$, after passing to a further subsequence, 
$$
\|f_n^M\|_p \leq A, \qtq{and} \|\scriptE f_n^M\|_q \geq \tfrac B2, \qtq{for all $n$.}
$$
By \eqref{E:scalable}, there exists a sequence $\{\tau_n\} \subseteq \scriptD$ such that for all $n$,
$$
B(\tfrac BA)^{\frac{1-\theta}\theta} \lesssim |\tau_n|^{-\frac1{p'}}\|\scriptE (f_n^M)_{\tau_n}\|_\infty.
$$

We will show that after rescaling by $\lambda_n$, the $\tau_n$ have a convergent subsequence. On the one hand, by \eqref{E:Efn>M small}, for $\eps \ll B(\tfrac BA)^{\frac{1-\theta}\theta}$, each $\tau_n$ must intersect $\{|\xi-e_1|<M\lambda_n\}$.  On the other hand, by H\"older's inequality,
\begin{align*}
|\tau_n|^{-\frac1{p'}}\|\scriptE(f_n^M)_{\tau_n}\|_\infty &\leq |\tau_n|^{-\frac1{p'}}\|f_n^M\|_\infty|\supp f_n^M| \\
&\lesssim_M \min\{|\tau_n|^{-1/p'}\lambda_n^{d/p'},|\tau_n|^{1/p}\lambda_n^{-d/p}\}
\end{align*}
where $|\supp f_n^M|$ denotes the measure of the support of $f_n^M.$
Therefore $1 \lesssim_{A,B,M} \min\{|\tau_n|^{-1/p'}\lambda_n^{d/p'},|\tau_n|^{1/p}\lambda_n^{-d/p}\}$, which implies  
 $|\tau_n| \sim_{A,B,M} \lambda_n^d$.  Therefore, after passing to a subsequence, $\chi_{\tau_n}(\sqrt{1-|\lambda_n\xi|^2},\lambda_n\xi) \to \chi_\tau(\xi)$, for some cube $\tau \subseteq \R^d$, pointwise, a.e.  

We may assume, after modulation and multiplication by a constant that 
$$
\scriptE(f_n^M)_{\tau_n}(0) = \|\scriptE (f_n^M)_{\tau_n}\|_\infty.
$$
Passing to a subsequence, the weak limits
\begin{gather*}
    \phi^g(\xi):= \wklim \lambda_n^{d/p}f_n^M(\sqrt{1-|\lambda_n\xi|^2},\lambda_n\xi)\\
        \phi(\xi):=\wklim \lambda_n^{d/p}f_n(\sqrt{1-|\lambda_n\xi|^2},\lambda_n\xi)\chi_{\{|\xi|<\frac12\lambda_n^{-1}\}}
\end{gather*}
exist; 
we set $\phi^b:=\phi-\phi^g$.  By the dominated convergence theorem and the observation of the previous paragraph,
$$
(\phi^g(\xi))_\tau = \wklim \lambda_n^{d/p}(f_n^M)_{\tau_n}(\sqrt{1-|\lambda_n\xi|^2},\lambda_n\xi).  
$$

Standard convergence arguments give, 
$$
\scriptE_\P \phi(x) = \lim_{n \to \infty} \lambda_n^{-\frac{d+2}q}e^{-i\lambda_n^{-2}x_1}\scriptE(f_n)_{\tau_n}(\lambda_n^{-2}x_1,\lambda_n^{-1}x'),
$$
for all $x$, and analogous relations hold for $\phi^g$ and the $f_n^M$ (and hence for $\phi^b$ and the $f_n^{>M}$).  Therefore
$$
|\tau|^{-\frac1{p'}}\|\scriptE_\P(\phi^g)_\tau\|_\infty \gtrsim B(\tfrac BA)^{\frac{1-\theta}\theta}.
$$
By Young's convolution inequality and the observation that the measure of the convex hull of $\tau$ satisfies $|\ch \tau| \sim |\tau|^{\frac{d+2}d}$,
$$
\|\scriptE_\P(\phi^g)_\tau\|_q \gtrsim B(\tfrac BA)^{\frac{1-\theta}\theta}.
$$
On the other hand, by Fatou,
$$
\|\scriptE_\P(\phi^b)_\tau\|_q < \eps,  
$$
so by the triangle inequality,
$$
\|\scriptE_\P (\phi)_\tau\|_q \gtrsim B(\tfrac BA)^{\frac{1-\theta}\theta}.
$$
Hence by $L^q$ boundedness of Fourier multiplication by $\chi_{\tilde\tau}$, 
$$
\|\scriptE_\P \phi\|_q \gtrsim B(\tfrac BA)^{\frac{1-\theta}\theta}.
$$
\end{proof}

With Lemma~\ref{L:inv rest large scale} in place, we are ready for the $L^2$-based profile decomposition.  

\begin{lemma} \label{L:L2 profile large scale}
Theorem~\ref{T:large space} holds when $p=2$.  Moreover, with assumptions and notation as in the statement of that result, \\
\emph{(ii'-iii')} $\lim_{n \to \infty} \|f_n\|_2^2 - \sum_{j=1}^J(\|\phi^{j,+}\|_2^2 + \|\phi^{j,-}\|_2^2)-\|r_n^J\|_2^2 = 0$,
for all $J \in \N$.  
\end{lemma}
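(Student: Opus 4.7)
The plan is to mirror the proof of Lemma~\ref{L:L2 profile scale 1}, iteratively extracting weak-limit profile pairs via Lemma~\ref{L:inv rest large scale} and using $L^2$ Hilbert-space orthogonality both to decouple the profiles and to bound the total extracted mass. Setting $r_n^0 := f_n$, at stage $J+1$ we check whether $\limsup_n \|\scriptE r_n^J\|_q > 0$; if not, the process terminates with $\phi^{J',\pm} := 0$ for $J' > J$. Otherwise, passing to a subsequence, we split $r_n^J$ into its $\omega_1 > 0$ and $\omega_1 < 0$ parts and observe that at least one has extension with $L^q$-norm bounded below. Applying Lemma~\ref{L:inv rest large scale} to that part (after reflecting through the origin if the nonnegligible piece lives on $\omega_1 < 0$) produces a sequence $\{x_n^{J+1}\} \subseteq \R^{d+1}$ and a nonzero weak limit $\phi^{J+1,\bullet}$ on that hemisphere, satisfying $\|\scriptE_\P \phi^{J+1,\bullet}\|_q \gtrsim B_J(B_J/A)^C$ where $B_J := \liminf_n \|\scriptE r_n^J\|_q$ and $A \geq \limsup_n \|r_n^J\|_2$. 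Along a further subsequence the opposite-sign weak limit $\phi^{J+1,-\bullet}$ defined by \eqref{E:phijpm} also exists; we form $g_n^{J+1}$ via \eqref{E:gnj def} and set $r_n^{J+1} := r_n^J - e^{ix_n^{J+1}\omega} g_n^{J+1}$.

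The next step is the asymptotic orthogonality
$$
\lim_{n \to \infty}\bigl(\lambda_n^2|(x_n^j - x_n^{j'})_1| + \lambda_n|(x_n^j-x_n^{j'})'|\bigr) = \infty, \qquad j \neq j',
$$
proved by induction on $j'$: were it to fail, a subsequence would make the rescaled phase $e^{i(x_n^j - x_n^{j'})\omega}$ converge to a well-defined unimodular function on the caps of radius $\sim \lambda_n$ around $\pm e_1$, forcing $\phi^{j', \pm}$ to be determined by already-extracted profiles and contradicting that $\phi^{j',\pm}$ is a weak limit of $r_n^{j'-1}$, which by construction annihilates all previously extracted modes. Once this orthogonality is in hand, conclusion (ii'-iii') follows from a direct $L^2$ expansion: the cross terms $\langle e^{ix_n^j\omega} g_n^j, e^{ix_n^{j'}\omega} g_n^{j'}\rangle_{L^2(\S^d)}$ vanish by stationary phase (after the change of variables $\omega = (\pm\sqrt{1-|\lambda_n\xi|^2},\lambda_n\xi)$ the effective phase frequency is at least $\lambda_n^{-2}|(x_n^j-x_n^{j'})_1|$ or $\lambda_n^{-1}|(x_n^j-x_n^{j'})'|$), and the diagonal terms give $\|g_n^j\|_2^2 \to \sum_\pm \|\phi^{j,\pm}\|_2^2$ by the same change of variables.

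Conclusion (i) is Lemma~\ref{L:sphere to parab} applied separately on each hemisphere; (ii)-(iii) specialize to (ii'-iii') since $\tilde p = 2$; and (iv) is obtained by iterated Brezis--Lieb in $L^q$, with the cross terms $\|\scriptE g_n^j \cdot \scriptE g_n^{j'}\|_{q/2}$ shown to vanish using (i) together with a Galilean change of variables that reduces the estimate to asymptotic orthogonality of translations in the parabolic setting. For (v), Lemma~\ref{L:inv rest large scale} combined with $L^2 \to L^q$ boundedness of $\scriptE_\P$ (a consequence of (i) and our standing assumption on $\scriptE$) yields the uniform lower bound $\|\phi^{J+1,\bullet}\|_2 \gtrsim B_J^{C+1}/A^C$; since $\sum_j \sum_\pm \|\phi^{j,\pm}\|_2^2 \leq A^2$ by (ii'), the sequence $B_J$ must tend to zero, giving (v).

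The main obstacle lies in the cross-hemisphere bookkeeping: because a single translation $x_n^{J+1}$ is used to extract both $\phi^{J+1,\pm}$ while Lemma~\ref{L:inv rest large scale} only directly produces the weak limit on one hemisphere, care is needed to verify that subtracting the full $g_n^{J+1}$ (and not just its $\bullet$-component) is consistent with both the localization hypothesis~\eqref{E:Efn>M large space} required to iterate and the orthogonality argument at the next stage. A secondary subtlety is that when $|(x_n^{J+1})_1| \sim \lambda_n^{-2}$, the phase $e^{ix_n^{J+1}(\pm\sqrt{1-|\lambda_n\xi|^2},\lambda_n\xi)}$ behaves rather differently on the two hemispheres, so one must check that the weak-limit extraction on the opposite hemisphere is genuinely independent and compatible with the parabolic rescaling that defines $\phi^{J+1,-\bullet}$.
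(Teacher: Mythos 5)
Your overall iteration scheme differs structurally from the paper's: rather than doing the extraction jointly—applying Lemma~\ref{L:inv rest large scale} to whichever hemisphere currently dominates while simultaneously picking up the opposite hemisphere's weak limit at the same translation—the paper runs \emph{two independent} profile decompositions, one on $f_n^+:=f_n\chi_{\{\omega_1>0\}}$ and one on $f_n^-$ (via conjugation), and only afterwards merges the two lists of translations by identifying those that remain bounded after parabolic rescaling and inserting zero profiles so that $x_n^{j,+}\equiv x_n^{j,-}$. Both schemes can in principle be made to work, but the paper's factorization sidesteps exactly the ``cross-hemisphere bookkeeping'' you flag at the end, because each hemisphere's translations and weak limits are extracted self-consistently before any pairing is attempted; in your scheme the opposite-hemisphere limit $\phi^{J+1,-\bullet}$ has no lower bound and the induction on asymptotic separation has to thread carefully through translations extracted from alternating hemispheres.

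The genuine gap is in your proof of conclusion (iv). You describe it as ``iterated Brezis--Lieb in $L^q$'' with cross terms $\|\scriptE g_n^j\cdot\scriptE g_n^{j'}\|_{q/2}$ handled by a Galilean change of variables, but the ordinary Brezis--Lieb lemma does not apply here: because of the antipodal concentration, $\scriptE g_n^j$ contains the beat $e^{+ix_1}\scriptE_\P\phi^{j,+}(-\lambda_n^2 x_1,\lambda_n x') + e^{-ix_1}\scriptE_\P\phi^{j,-}(\lambda_n^2 x_1,\lambda_n x')$, and after the parabolic rescaling that would normalize one of the two summands, the other carries a phase $e^{\mp 2i\lambda_n^{-2}x_1}$ that does \emph{not} converge pointwise. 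Thus the rescaled version of $\scriptE(e^{-ix_n^j\omega} r_n^{j-1})$ has no pointwise limit and no modulation fixes this. This is precisely why Frank--Lieb--Sabin introduced a \emph{generalized} Brezis--Lieb lemma (Lemma~3.1 of \cite{FLS}), in which the oscillatory piece $\pi_n$ is only required to be dominated by a fixed $L^q$ function rather than to converge. The paper's proof sets up an $\alpha_n^M=\pi_n^M+\rho_n^M+\sigma_n^M$ decomposition with frequency truncations to invoke exactly this lemma, then removes the truncations. Your outline neither cites this tool nor supplies a substitute; and the ``cross-term'' strategy $\|\scriptE g_n^j\scriptE g_n^{j'}\|_{q/2}\to 0$ does not fill the gap because the decoupling between $\scriptE g_n^j$ and $\scriptE r_n^J$ (not just between distinct profiles) is what's needed, and the remainder has no useful structure to estimate this product directly. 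As a minor point, (ii)--(iii) of Theorem~\ref{T:large space} at $\tilde p=2$ only give inequalities, so they do not ``specialize'' to the equality (ii'-iii'); you correctly obtain the latter from the independent $L^2$ expansion argument, but the claim that it follows from (ii)--(iii) should be dropped.
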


Essentially all of the key ingredients needed for this lemma were already established in \cite{FLS}; we provide details both for the convenience of the reader and because a key step, an improved Br\'ezis--Lieb Lemma, will be used in later arguments as well.   

\begin{proof}
We initially treat the two hemispheres separately, setting $f_n^\pm:=f_n\chi_{\{\pm\omega_1>0\}}$.  To decompose $f_n^+$, we set $r_n^{0,+}:=f_n^+$, and apply the following iterative process.  Given a bounded sequence of remainders $\{r_n^{J,+}\} \subseteq L^2(\S^d)$ obeying \eqref{E:Efn>M large space +}, we stop if $\lim\|\scriptE r_n^{J,+}\|_{q_2}=0$.  If this limit is nonzero, we apply Lemma~\ref{L:inv rest large scale}, obtaining a subsequence of $\{f_n\}$, points $\{x_n^{J+1,+}\} \subseteq \R^{d+1}$, and a weak limit $\phi^{J+1,+} \in L^2(\R^d)$. We set
$$
r_n^{J+1,+}:=r_n^{J,+} - e^{ix_n^{J+1,+}\omega}g_n^{J+1,+},
$$
with $g_n^{J+1,+}$ defined as in \eqref{E:def gn}.

That the $\{x_n^{j,+}\}$ move apart after parabolic rescaling follows from familiar arguments.  Namely, we suppose that there is some minimal superscript $j$ for which there exists some (minimal) superscript $j'>j$ with $\phi^{j,+},\phi^{j',+} \not\equiv 0$ and
$$
\lambda_n^2|(x_n^{j,+}-x_n^{j',+})_1|+\lambda_n|(x_n^{j,+}-x_n^{j',+})'| \not\to \infty.
$$
Passing to a subsequence,
$$
(\lambda_n^2(x_n^{j,+}-x_n^{j',+})_1,\lambda_n(x_n^{j,+}-x_n^{j',+})') \to y^{jj'}.
$$
Passing to a further subsequence,
$$
e^{i(x_n^{j,+}-x_n^{j',+})(\sqrt{1-|\lambda_n\xi|^2},\lambda_n\xi)} \to e^{i\theta^{jj'}}e^{iy^{jj'}(-\frac12|\xi|^2,\xi)},
$$
locally uniformly, for some $\theta^{jj'} \in [0,2\pi)$.  On the other hand,
$$
e^{i(x_n^k-x_n^{j'})(\sqrt{1-|\lambda_n\xi|^2},\lambda_n\xi)} \chi_{\{|\xi| \leq R\}} \rightharpoonup 0,
$$
weakly in $L^p$ for all $R$.  Noting that $r_n^{j'-1,+}=r_n^{j-1,+}-\sum_{k=j}^{j'-1} e^{ix_n^{k,+}\omega}g_n^{k,+}$,
\begin{align*}
    \phi^{j',+} &= \wklim \lambda_n^{\frac dp} e^{-ix_n^{j',+}(\sqrt{1-|\lambda_n\xi|^2},\lambda_n\xi)}g_n^{k,+}\\
    &=e^{i\theta^{jj'}}e^{iy^{jj'}(-\frac12|\xi|^2,\xi)}\phi^{j,+} - \sum_{k=j}^{j'-1}\wklim e^{i(x_n^{k,+}-x_n^{j',+})(\sqrt{1-|\lambda_n\xi|^2},\lambda_n\xi)}\phi^{k,+}\\
    &=0,
\end{align*}
a contradiction.  

Taking the complex conjugate and applying the preceding argument (along our new subsequence), we obtain decompositions for the lower hemisphere as well,
$$
f_n^- = \sum_{j=1}^J e^{ix_n^{j,-}\omega}g_n^{j,-} + r_n^{J,-}.
$$ 
Passing to a subsequence, for all $j,j'$,
$$
(\lambda_n^2(x_n^{j,+}-x_n^{j',-})_1,\lambda_n(x_n^{j,+}-x_n^{j',-})')
$$
either converges or tends to $\infty$ in norm.  In the former case, changing the $\phi^{j,-}$ if needed, we may assume that for all $j,j'$, either $x_n^{j,+} \equiv x_n^{j',-}$ or 
$$
\lambda_n^2|(x_n^{j,+}-x_n^{j',-})_1|+\lambda_n|(x_n^{j,+}-x_n^{j',-})'| \to \infty.
$$
Reordering and inserting $0$'s in place of $\phi^{j,+}$ or $\phi^{j,-}$ where needed, we may assume that $x_n^{j,+}\equiv x_n^{j,-}$ for all $j$.

We thus obtain decompositions
$$
f_n = \sum_{j=1}^J e^{ix_n^j\omega} g_n^j + r_n^J, \qquad J \in \N,
$$
as in the statement of Theorem~\ref{T:large space}.  It remains to verify (i), (ii-iii'), (iv) and (v).  

The approximation (i) of $\scriptE g_n^j$ by a rescaling and modulation of $\scriptE_\P \phi^j$ follows from \eqref{E:sphere to parab} and the triangle inequality.  The $L^2$ orthogonality, conclusion (ii-iii'), follows on each hemisphere separately by the weak limit condition; we put the pieces together via $\|f_n\|_2^2 = \|f_n^+\|_2^2+\|f_n^-\|_2^2$.  

The $L^q$-orthogonality, conclusion (v), follows by iterating the generalized Br\'ezis--Lieb lemma, Lemma~3.1 of \cite{FLS} (cf. \cite{BL}); because the lemma was developed to address precisely this situation, we will be brief in showing how that lemma applies here.  In the notation of that lemma, given $J$ and $M$, we set
\begin{gather*}
    \alpha_n^M(x):=\lambda_n^{-\frac{d+2}q}\scriptE (e^{-ix_n^J\omega}r_n^{J-1}\chi_{\{|\omega'| \leq M \lambda_n\}})(\lambda_n^{-2}x_1,\lambda_n^{-1}x')\\
    \pi_n^M(x):=\sum_\pm e^{\pm i \lambda_n^{-1}x_1}\scriptE_{\P}(\phi^{j,\pm}\chi_{|\xi| \leq M\}})(\mp x_1,x')\\
    \rho_n^M(x):=\scriptE(\lambda_n^{-\frac{d+2}q}e^{-ix_n^J\omega}r_n^J \chi_{\{|\omega'| \leq M \lambda_n\}})(\lambda_n^{-2}x_1,\lambda_n^{-1}x')\\
    \sigma_n^M(x):= \lambda_n^{-\frac{d+2}q}\scriptE (g_n^J\chi_{\{|\omega'| \leq M \lambda_n\}})(\lambda_n^{-2}x_1,\lambda_n^{-1}x')-\pi_n^M(x),
\end{gather*}
and let $\alpha_n,\pi_n,\rho_n,\sigma_n$ denote the corresponding functions with no truncation in the frequency variables.
Then $\alpha_n^M = \pi_n^M + \rho_n^M + \sigma_n^M$. We immediately see that $|\pi_n^M|$ is bounded by a fixed $L^q$ function.  Observing that
\begin{align*}
\rho_n^M(x) &= \sum_\pm e^{\pm i\lambda_n^{-2}x_1}\int e^{i(x_1,x')(\lambda_n^{-2}(\pm\sqrt{1-|\lambda_n\xi|^2}\mp 1),\xi)} e^{-ix_n^J(\lambda_n^{-2}(\pm\sqrt{1-|\lambda_n\xi|^2}\mp 1),\xi)}\\
&\qquad \times r_n^J(\pm\sqrt{1-|\lambda_n\xi|^2},\lambda_n\xi)\chi_{\{|\xi| \leq M\}} \, \tfrac{d\xi}{\sqrt{1-|\lambda_n\xi|^2}},
\end{align*}
we see that $\rho_n^J \to 0$ pointwise.  Finally, 
\begin{align*}
\sigma_n^M &= \sum_\pm e^{\pm i\lambda_n^{-2}x_1}\int(e^{i(x_1,x')(\lambda_n^{-2}(\pm\sqrt{1-|\lambda_n\xi|^2}\mp 1),\xi)} \tfrac1{\sqrt{1-|\lambda_n\xi|^2}} - e^{i(x_1,x')(\mp\tfrac12|\xi|^2,\xi)})\\
&\qquad\times\phi^{J,\pm}(\xi)\, \chi_{\{|\xi| \leq M\}}\, d\xi.
\end{align*}
If $\phi^{J,\pm}(\xi)\, \chi_{\{|\xi| \leq M\}}$ are assumed to be smooth, $\sigma_n^M \to 0$ in $L^q$ by stationary phase and the dominated convergence theorem; for general $\phi^{J,\pm}$, convergence to 0 follows from boundedness of $\scriptE$ and $\scriptE_\P$ from $L^p$ to $L^q$ and density arguments.  

By the generalized Br\'ezis--Lieb lemma, 
$$
\lim_{n \to \infty}\|\alpha_n^M\|_q^q - \|\pi_n^M+\sigma_n^M\|_q^q - \|\rho_n^M\|_q^q =0.
$$
By hypothesis \eqref{E:Efn>M large space} and $L^p \to L^q$ boundedness of $\scriptE$ and $\scriptE_\P$, 
$$
\lim_{M \to \infty} \limsup_{n \to \infty} \|\alpha_n^M-\alpha_n\|_q + \|\pi_n^M-\pi_n\|_q + \|\rho_n^M-\rho_n\|_q+\|\sigma_n^M-\sigma_n\|_q = 0.
$$
Therefore
$$
\lim_{n \to \infty}\|\alpha_n\|_q^q - \|\pi_n + \sigma_n\|_q^q - \|\rho_n\|_q^q =0,
$$
i.e., 
$$
\lim_{n \to \infty} \|\scriptE r_n^{J-1}\|_q^q - \|\scriptE g_n^J\|_q^q - \|\scriptE r_n^J\|_q^q = 0.
$$
The $L^q$ orthogonality, (iv) follows by induction.

Finally, smallness of the errors follows from (ii-iii') and Lemma~\ref{L:inv rest large scale}:  a non-negligible remainder term yields a weak limit with $L^2$ norm bounded below, and that reduces the $L^2$ norm of the subsequent remainder by a nonnegligible amount.  
\end{proof}

Now we turn to the analogue of Lemma~\ref{L:Lp almost orthog 1} for the case of antipodal frequency concentration.  Let $\psi,\rho \in C^\infty_c(\R^d;[0,1])$ with $\psi(0) = 1$ and $\int \rho = 1$.  For $r>0$, we define $\psi_r(\xi):=\psi(r\xi)$ and $\rho^r(\xi) = r^{-d}\rho(\tfrac\xi r)$.  Given a doubly indexed sequence $\{x_n^j\}_{j,n \in \N}$, and $\lambda_n \searrow 0$, we define operators on integrable functions $f$ on $\S^d$ by
$$
(\pi_r)^{j,\pm}_n f(\xi) := \rho^r *_\eta (\psi_r(\eta) e^{-ix_n^j(\pm\sqrt{1-|\lambda_n\eta|^2},\lambda_n\eta)}\lambda_n^{d/p} f(\pm\sqrt{1-|\lambda_n\eta|^2},\lambda_n\eta).
$$
We also define vector-valued  operators 
$$
(\Pi_r)_n^J f := (((\pi_r)_n^{j,\bullet} f)_{j=1}^J)_{\bullet \in \{\pm\}}, \qquad J \in \N.  
$$
We recall the notation $\tilde p := \max\{p,p'\}$. Thus $\tilde p \geq p$.

\begin{lemma} \label{L:Lp almost orthog large}
Let $1 < p < \infty$.  Assume that the sequences $\{y_n^j\}$ obey $\lim_{n \to \infty} |y_n^j-y_n^{j'}| = \infty$ for all $j \neq j'$, where $y_n^j := (\lambda_n^2(x_n^j)_1,\lambda_n (x_n^j)')$.  Then the $(\Pi_r)_n^J$ map $L^p(\S^d)$ boundedly into $\ell^p_\bullet(\ell^{\tilde p}_j(L^p(\R^d)))$, with operator norms bounded uniformly in $r,n$.  Moreover, 
\begin{equation} \label{E:Pinj 1 large space}
\lim_{r \to 0}\limsup_{n \to\infty} \|(\Pi_r)_n^J\|_{L^p \to \ell^p_\bullet(\ell^{\tilde p }_j(L^p(\R^d)))} = 1.
\end{equation}
Finally, let $\{f_n\}$ be a bounded sequence in $L^p(\S^d)$, supported in $\{\omega \in \S^d:|\omega'|<\tfrac12\}$, for which the weak limits $\phi^{j,\pm}$ in \eqref{E:phijpm} exist, and define $g_n^j$ as in \eqref{E:gnj def}.  Then 
\begin{gather} \label{E:pinj f is phi large space}
    \lim_{r \to 0} \lim_{n \to \infty} \|(\pi_r)_n^{j,\pm} f_n-\phi^{j,\pm}\|_p = 0\\ \label{E:pinj* phi is g large space} 
    \lim_{r \to 0} \lim_{n \to \infty} \|[(\pi_r)_n^{j,\pm}]^* \phi^{j,\pm} - e^{ix_n^j\omega}g_n^{j,\pm}\|_p = 0.
\end{gather}
\end{lemma}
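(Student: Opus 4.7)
The plan is to follow the strategy of Lemma~\ref{L:Lp almost orthog 1}, inserting a preliminary parabolic rescaling that transports the problem from the sphere to $\R^d$. Because any $f\in L^p(\S^d)$ supported in $\{|\omega'|<\tfrac12\}$ splits as $f=f^++f^-$ with $f^\pm:=f\chi_{\{\pm\omega_1>0\}}$ having disjoint supports, and $(\pi_r)_n^{j,\pm}$ annihilates $f^\mp$, the outer $\ell^p_\bullet$-norm factors as $\|f^+\|_p^p+\|f^-\|_p^p$ and it suffices to bound each hemisphere separately. On the upper hemisphere, the change of variables $\omega=(\sqrt{1-|\lambda_n\eta|^2},\lambda_n\eta)$ identifies $f^+$ isometrically with
\[
F_n(\eta):=\lambda_n^{d/p}(1-|\lambda_n\eta|^2)^{-1/(2p)}f(\sqrt{1-|\lambda_n\eta|^2},\lambda_n\eta)\chi_{\{|\eta|<\tfrac12\lambda_n^{-1}\}}\in L^p(\R^d),
\]
and under this identification
\[
(\pi_r)_n^{j,+}f(\xi)=\int \rho^r(\xi-\eta)\psi_r(\eta)(1-|\lambda_n\eta|^2)^{1/(2p)}e^{-i\Phi_n^j(\eta)}F_n(\eta)\,d\eta,
\]
with $\Phi_n^j(\eta):=(x_n^j)_1\sqrt{1-|\lambda_n\eta|^2}+\lambda_n(x_n^j)'\cdot\eta$. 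Since the Jacobian factor converges to $1$ locally uniformly, the problem reduces (up to an $o(1)$ error as $n\to\infty$ with $r$ fixed) to the same setup as Lemma~\ref{L:Lp almost orthog 1}, but for multipliers modulated by the curved phase $\Phi_n^j$ rather than the linear phase $x_n^j\cdot\omega$.

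For the uniform boundedness of $(\Pi_r)_n^J$ and the sharp limit~\eqref{E:Pinj 1 large space}, I would interpolate between the trivial bounds at $p=1,\infty$ (immediate from $\|\rho^r\|_1=\|\psi_r\|_\infty=1$ and the triangle inequality) and the case $p=2$, which is handled via Schur's test applied to the kernel of $(\Pi_r)_n^J[(\Pi_r)_n^J]^*$. After absorbing the Jacobian, the kernel entries take the form $\int\rho^r(\xi-\eta)\rho^r(\eta-\vartheta)\psi_r(\eta)^2 e^{i(\Phi_n^k-\Phi_n^j)(\eta)}d\eta$ (and analogues mixing the two signs); the diagonal entries with $j=k$ and matching hemispheres converge as $r\to 0$ to the identity kernel on $\R^d$, while the off-diagonal entries vanish as $n\to\infty$ by nonstationary phase once the phase gradient is shown to diverge on the compact support of $\psi_r$. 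For the weak-limit identities~\eqref{E:pinj f is phi large space} and~\eqref{E:pinj* phi is g large space}, I would work at fixed $r$: by the definition of $\phi^{j,\pm}$ as a weak $L^p$ limit, pairing the $n$-dependent rescaled sequence against the $L^{p'}$ function $\eta\mapsto\rho^r(\xi-\eta)\psi_r(\eta)$ yields pointwise convergence $(\pi_r)_n^{j,\pm}f_n(\xi)\to\rho^r*(\psi_r\phi^{j,\pm})(\xi)$; dominated convergence (justified by the uniform $L^p$ bound established above) upgrades this to $L^p$, and $r\to 0$ recovers $\phi^{j,\pm}$ since $\rho^r$ is an approximate identity and $\psi_r\to 1$ locally. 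The adjoint identity is the same statement pushed back to the sphere via the change of variables, noting that $g_n^{j,\pm}$ is precisely $\phi^{j,\pm}$ rescaled and transplanted onto the $\pm$ hemisphere.

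The main obstacle is the nonstationary-phase step at $p=2$, where one must verify that the curved spherical phase still produces the same cancellation that the linear phase gave in Lemma~\ref{L:Lp almost orthog 1}. The constant term $(x_n^k-x_n^j)_1$ drops out of the gradient, leaving
\[
\nabla_\eta(\Phi_n^k-\Phi_n^j)=-\lambda_n^2(x_n^k-x_n^j)_1(1-|\lambda_n\eta|^2)^{-1/2}\eta+\lambda_n(x_n^k-x_n^j)'.
\]
On the compact set $\{|\eta|\leq 1/r\}$ the factor $(1-|\lambda_n\eta|^2)^{-1/2}$ tends to $1$ uniformly, so this gradient approaches the affine map $\eta\mapsto -(y_n^k-y_n^j)_1\eta+(y_n^k-y_n^j)'$, whose modulus is comparable to $|y_n^k-y_n^j|$ outside a shrinking neighborhood of its (at most one) zero. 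Integration by parts against the fixed $C^\infty_c$ amplitude $\rho^r(\xi-\eta)\rho^r(\eta-\vartheta)\psi_r(\eta)^2$ then produces a factor of $|y_n^k-y_n^j|^{-N}$ for any $N$, and this gives the desired off-diagonal decay. This is the only genuinely new ingredient compared to Lemma~\ref{L:Lp almost orthog 1}; everything else follows that template verbatim.
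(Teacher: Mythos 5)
Your proposal follows essentially the same route as the paper: split by hemisphere to peel off the outer $\ell^p_\bullet$ factor, observe that the endpoint cases $p=1,\infty$ are trivial, handle $p=2$ via Schur's test applied to the kernel of $(\Pi_r)_n^J[(\Pi_r)_n^J]^*$ with phase cancellation giving off-diagonal decay, interpolate, and deduce the weak-limit identities \eqref{E:pinj f is phi large space}--\eqref{E:pinj* phi is g large space} by pairing the rescaled sequence against the $L^{p'}$ amplitude $\eta\mapsto\rho^r(\xi-\eta)\psi_r(\eta)$ and upgrading pointwise convergence to $L^p$ via dominated convergence on a fixed compact set. This all matches.

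One small overstatement: you claim integration by parts against the amplitude produces a factor $|y_n^k-y_n^j|^{-N}$ for \emph{any} $N$. That would require the phase gradient to be bounded below on the whole support of the amplitude, but when $(y_n^k-y_n^j)_1$ is large the affine gradient $\eta\mapsto -(y_n^k-y_n^j)_1\eta+(y_n^k-y_n^j)'$ may vanish inside the support of $\psi_r$, producing a genuine stationary point with Hessian $\sim (y_n^k-y_n^j)_1 I$. Near that point one only gets the stationary-phase rate $\jp{y_n^k-y_n^j}^{-d/2}$ (this is exactly the rate the paper records), not arbitrary polynomial decay. The weaker bound is still more than enough: the kernels are supported in a fixed compact set for each $r$, so $\|(K_r)_n^{jk}\|_{L^\infty_\zeta L^1_\xi}$ and $\|(K_r)_n^{jk}\|_{L^\infty_\xi L^1_\zeta}$ tend to $0$ as $n\to\infty$, which is what Schur's test needs. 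So the conclusion you draw is correct; only the claimed rate should be softened from ``any $N$'' to the dimensionally correct $d/2$.
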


\begin{proof}
Since $f \mapsto (f\chi_{\{\omega_1>0\}},f\chi_{\{\omega_1<0\}})$ maps $L^p(\S^d)$ boundedly into $\ell^p(L^p(\S^d) \times L^p(\S^d))$, with operator norm 1, \eqref{E:Pinj 1 large space} would follow from 
\begin{equation} \label{E:Pinj+ 1 large space}
\lim_{r \to 0}\limsup_{n \to\infty} \|(\Pi_r)_n^{J,\bullet}\|_{L^p \to \ell^{\tilde p}_j(L^p(\R^d))} = 1, \quad \bullet = +,-,
\end{equation}
and using reflection across the hyperplane $\{0\} \times \R^d$, we may choose the positive sign in \eqref{E:Pinj+ 1 large space}, \eqref{E:pinj f is phi large space}, and \eqref{E:pinj* phi is g large space}.    To keep equations within lines, we will omit the superscript $+$ from the operators from the remainder of the proof.  

It is elementary to show that $\|(\pi_r)_n^{j} f\|_p \lesssim \|f\|_p$, with implicit constant independent of $f,r,n,j$. Moreover, for any $j$,
$$\lim_{r \to 0}\limsup_{n \to \infty}\|(\pi_r)_n^{j}\|_{L^p \to L^p} = 1.
$$
Indeed, the upper bound uses the compact support of $\psi$, and the lower bound is obtained by considering a shrinking profile $e^{ix_n^j\omega} g_n^{j,+}$.  In particular, \eqref{E:Pinj+ 1 large space} holds when $p=1,\infty$.  

The upper bound in \eqref{E:Pinj 1 large space} will thus follow from that in the case $p=2$ by complex interpolation.  

We turn now to the proof that 
\begin{equation} \label{E:PinJ norm to 1 L2}
\limsup_{n \to \infty} \|(\Pi_r)_n^{J}\|_{L^2 \to \ell^2(\ell^2(L^2))} = 1,
\end{equation}
for all $r$.  We bound
\begin{equation} \label{E:PinjPink* phi}
\begin{aligned}
    &\|(\Pi_r)_n^{J}[(\Pi_r)_n^{J}]^*(\phi^j)_{j=1}^J\|_{\ell^2(L^2)}^2 \\
    &\qquad \leq (1+\eps)\sum_{j=1}^J \|(\pi_r)_n^{j}[(\pi_r)_n^{j}]^* \phi^j\|_{L^2}^2 + C_{J,\eps} \sum_{j \neq k} \|(\pi_r)_n^{j}[(\pi_r)_n^{k}]^* \phi^k\|_{L^2}^2.
\end{aligned}
\end{equation}
We expand (for $\xi \in \R^d$)
$$
(\pi_r)_n^{j}[(\pi_r)_n^{k}]^* \phi(\xi) = \int \phi(\zeta)(K_r)_n^{jk}(\zeta,\xi)\,d\xi,
$$
where 
$$
(K_r)_n^{jk}(\zeta,\xi) := \int \psi_r(\eta)^2 \rho_r(\xi-\eta)\rho_r(\zeta-\eta)e^{i(x_n^k-x_n^j)(\sqrt{1-|\lambda_n\eta|^2},\lambda_n\eta)}\sqrt{1-|\lambda_n\eta|^2}\, d\eta.
$$
A straightforward computation (using our hypotheses on $\rho,\psi$) gives 
$$\|(K_r)_n^{jk}\|_{L^\infty_\zeta L^1_\xi}, \|(K_r)_n^{jk}\|_{L^\infty_\xi L^1_\zeta} \leq 1.
$$
For $j \neq k$, stationary phase gives $\|(K_r)_n^{jk}\|_{L^\infty} \lesssim \jp{y_n^k-y_n^j}^{-\frac d2}$.  Since the $(K_r)_n^{jk}$ have supports contained in a fixed compact set (for fixed $r$),
$$\|(K_r)_n^{jk}\|_{L^\infty_\zeta L^1_\xi}, \|(K_r)_n^{jk}\|_{L^\infty_\xi L^1_\zeta} \to 0 ,
$$
as $n \to \infty$.  Hence by \eqref{E:PinjPink* phi}, 
$$
\limsup_{n \to \infty} \|(\Pi_r)_n^{j}[(\Pi_r)_n^{k}]^*\|_{\ell^2(L^2) \to \ell^2(L^2)} \leq 1+\eps,
$$
for all $\eps>0$.  Sending $\eps \to 0$, we obtain \eqref{E:PinJ norm to 1 L2}.

It remains to prove \eqref{E:pinj f is phi large space} and \eqref{E:pinj* phi is g large space}. By boundedness of the $(\pi_r)_n^k$ on $L^p$, we may assume the $\phi^k$ are compactly supported. Modulating $f_n\rightsquigarrow e^{-x_n^j\omega}f_n$ if needed, it suffices to consider the case $x_n^j=0$, for all $n$.    

Since \eqref{E:pinj* phi is g large space} is essentially elementary, we turn to \eqref{E:pinj f is phi large space}.  Noting that $|y_n^k| \to \infty$ when $k \neq j$,
$$
\lambda_n^{\frac dp} e^{ix_n^k(\sqrt{1-|\lambda_n\xi|^2},\lambda_n \xi)} g_n^{k,+}(\sqrt{1-|\lambda_n\xi|^2},\lambda_n\xi) = e^{i(x_n^k)_1}e^{iy_n^k(\lambda_n^{-2}\sqrt{1-|\lambda_n\xi|^2},\xi)}\phi^k(\xi) \rightharpoonup 0,
$$
weakly when $k \neq j$. Hence, by construction, 
$$
\lambda_n^{\frac dp} f(\sqrt{1-|\lambda_n\xi|^2},\lambda_n\xi) \rightharpoonup \phi^j,
$$
weakly, as $n \to \infty$.  Therefore $(\pi_r)_n^j f_n(\xi) \to \int \rho_r(\xi-\eta)\psi_r(\eta)\phi^j(\eta)\, d\eta$ pointwise in $\xi$, as $n \to \infty$.  In fact, the convergence is in $L^p$ by the dominated convergence theorem, and  \eqref{E:pinj f is phi large space} follows. 
\end{proof}

We are finally ready to conclude the proof of the $L^p$ profile decomposition in the case of antipodal concentration, Theorem~\ref{T:large space}.

\begin{proof}[Proof of Theorem~\ref{T:large space}]

By Lemma~\ref{L:L2 profile large scale}, we may assume that $p_1:=p \neq 2$.  We fix an exponent $p_0 < \tfrac{2(d+1)}d$ such that $p_1$ lies between $p_0$ and $2=:p_2$ and the extension conjecture holds at $(p_0,q_0)$, with $q_0:=\tfrac{d+2}d p_0'$.  

It suffices to prove the theorem under the additional assumptions that $\|f_n\|_p \leq 1$, $\lim\|f_n\|_p = 1$, and $f_n$ is supported in $\{|\omega'|<\frac12\}$, for all $n$.  (Indeed, we may simply add $f_n\chi_{\{|\omega'|>\frac12\}}$ to the error terms $r_n^J$ and then multiply by a constant.)  We define
$$
f_n^M:=f_n \chi_{\{|f_n| \leq M\lambda_n^{-d/p}\}}\chi_{\dist(\omega,[e_1])<M\lambda_n\}}.  
$$
Setting $\alpha_i:=\tfrac dp-\tfrac d{p_i}$  for $i\in\{0,1,2\}$, $\{\lambda_n^{\alpha_2}f_n\}$ obeys the hypotheses of Theorem~\ref{T:large space} with $p=2$.  Therefore, by Lemma~\ref{L:L2 profile large scale}, after passing to a subsequence (which does not depend on $M \in \N$), for all $M$, there exist $\{x_n^{j,M}\} \subseteq \R^{d+1}$ with
$$
\lim_{n \to \infty} (\lambda_n^2|(x_n^{j,M}-x_n^{j,M})_1| + \lambda_n|(x_n^{j,M}-x_n^{j,M})'|) = \infty, \qquad j \neq j',
$$
and weak limits
$$
L^2(\R^d) \ni \phi^{j,M,\pm} := \wklim \lambda_n^{d/p}e^{-ix_n^{j,M}(\pm\sqrt{1-|\lambda_n\xi|^2},\lambda_n\xi)}f_n^M(\pm\sqrt{1-|\lambda_n\xi|^2},\lambda_n\xi),
$$
such that, with
\begin{gather*}
    g_n^{j,M}(\omega):=\sum_\pm \lambda_n^{-d/p}\phi^{j,M,\pm}(\lambda_n^{-1}\omega')\chi_{\{\pm\omega_1>0\}}\chi_{\{|\omega'|<\frac12\}},\\
    r_n^{J,M}:=f_n^M-\sum_{j=1}^J e^{ix_n^{j,M}\omega}g_n^{j,M},
\end{gather*}
we have
\begin{gather}
    \notag
    \lim_{n \to \infty}\|\lambda_n^{\alpha_2}f_n^M\|_2 - \sum_{j=1}^J\sum_{\pm}\|\phi^{j,M,\pm}\|_2^2 - \|\lambda_n^{\alpha_2}r_n^{J,M}\|_2^2 = 0,
    \\
    \label{E:qi BL}
    \lim_{n \to \infty} \|\lambda_n^{\alpha_2} \scriptE f_n^M\|_{q_2}^{q_2} - \sum_{j=1}^J \|\lambda_n^{\alpha_2}\scriptE g_n^{j,M}\|_{q_2}^{q_2} - \|\lambda_n^{\alpha_2}\scriptE r_n^{J,M}\|_{q_2}^{q_2} = 0,
    \\
    \label{E:rM qi}
    \lim_{J \to \infty} \lim_{n \to \infty} \|\lambda_n^{\alpha_2}\scriptE r_n^{J,M}\|_{q_2} = 0,
\end{gather} 
for all $J$.  In fact, the generalized Br\'ezis--Lieb lemma applied in the proof of Lemma~\ref{L:L2 profile large scale} implies that \eqref{E:qi BL} holds with $\alpha_i,q_i$ in place of $\alpha_2,q_2$,  when $i=0,1$ as well.  In particular, $\{\lambda_n^{\alpha_0}\scriptE r_n^{J,M}\}$ is bounded in $L^{q_0}$, uniformly in $J$, as $n \to \infty$, and hence by H\"older's inequality, \eqref{E:rM qi} also holds with $\alpha_1, q_1$ in place of $\alpha_2,q_2$, i.e. (since $\alpha_1=0$),
\begin{equation} \label{E:rm q0}
\lim_{J \to \infty} \limsup_{n \to \infty} \|\scriptE r_n^{M,J}\|_{q_1} = 0, \qtq{for all $M$.}
\end{equation}

After passing to a subsequence (and modulating the $\phi^{j,M,\pm}$ and multiplying by unimodular constants if needed), we may assume that for any $M,j$ and $M',j'$, either $x_n^{M,j} \equiv x_n^{M',j'}$, or 
$$
\lim_{n \to \infty} \lambda_n^2|(x_n^{M,j}-x_n^{M',j'})_1| + \lambda_n|(x_n^{M,j}-x_n^{M',j'})'| = \infty,
$$
for all $j,j',M,M' \in \N$.  By reordering the profiles and inserting zero profiles as needed (using yet another diagonal argument), we may write $x_n^{j,M} = x_n^j$, for all $M$.  Passing to a further subsequence, the weak limits $\phi^{j,\pm}$ defined in \eqref{E:phijpm} all exist.  

By Lemma~\ref{L:Lp almost orthog large}, for every $\eps > 0$, and any choice of $\bullet \in \{\pm\}$, $\|\phi^{j,\bullet}\|_p \geq \eps$ for at most $\eps^{-\tilde p}$ values of $j$. Indeed,
\begin{equation} \label{E:bound phi j dot}
\sum_{j=1}^J\|\phi^{j,\bullet}\|_p^{\tilde p} = \lim_{r \to 0} \lim_{n \to \infty} \|(\Pi_r)_n^Jf_n\|_{\ell^{\tilde p}(L^p)}^{\tilde p} \leq 1.
\end{equation}
In particular, we may reorder the profiles so that $\|\phi^{j,+}\|_p^p + \|\phi^{j,-}\|_p^p$ is decreasing.  
In the notation of Theorem~\ref{T:large space}, it remains to prove (ii), (iii), (iv), and (v).  

Inequality (ii), which was one of the $L^p$ almost orthogonality conditions, follows from Lemma~\ref{L:Lp almost orthog large}.  Indeed, 
\begin{align*}
    &\sum_{\pm}\bigl(\sum_{j=1}^J\|\phi^{j,\pm}\|_p^{\tilde p}\bigr)^{p/\tilde p}
    =\lim_{r \to \infty}\lim_{n \to \infty} \|(\Pi_r)_n^J f_n\|_{\ell^p(\ell^{\tilde p}(L^p)} \leq 1.
\end{align*}
Inequality (iii), the other $L^p$ almost orthogonality estimate, follows from Lemma~\ref{L:Lp almost orthog large} as well, since
\begin{align*}
    \limsup_{n \to \infty} \|\sum_{j=1}^J e^{ix_n^j\omega}g_n^j\|_p = \lim_{r \to \infty}\lim_{n \to \infty} \|[(\Pi_r)_n^J]^*((\phi^{j,\bullet})_{j=1}^J)_{\bullet \in\{\pm\}}\|_p \leq \|((\phi^{j,\bullet})_{j=1}^J)_{\bullet \in\{\pm\}}\|_{\ell^p(\ell^{\tilde p'}(L^p)}.
\end{align*}

The $L^q$-orthogonality condition (iv) follows from the generalized Br\'ezis--Lieb argument given in the proof of Lemma~\ref{L:L2 profile large scale} (indeed, the condition $p=2$ played no role in that part of the argument).   

We are left to establish (v), smallness of the extensions of the remainder terms.  We wish to remove the dependence on $M$ in \eqref{E:rm q0}, and, as in the previous section, we begin by showing that the rate of convergence to 0 in \eqref{E:rm q0} as $J \to \infty$ is independent of $M$.  

Given $M \in \N$ and $(j,\bullet) \in \N \times \{\pm\}$, we define $\phi_M^{j,\bullet}:=\phi^{j,\bullet} \chi_{\{|\xi|<M\}}$ and 
$$
(g_M)_n^{j,\bullet}(\omega):=\lambda_n^{-d/p}\phi^{j,\bullet}(\lambda_n^{-1}\omega')\chi_{\{\bullet\omega_1>0\}}\chi_{\{|\omega'|<\tfrac12\}}.  
$$

\begin{lemma} \label{L:EphiM to Ephi}
For every $M,j \in \N$ and $\bullet \in \{\pm\}$, 
\begin{equation} \label{E:EphiM to Ephi}
\lim_{n \to \infty}\|\scriptE ((g_M)_n^{j,\bullet} - g_n^{M,j,\bullet})\|_q \leq \limsup_{n \to \infty} \sup_{E \subseteq E_n^{M}} \|\scriptE f_n \chi_E\|_q,
\end{equation}
where $E_n^M$ was defined in \eqref{E:Efn>M large space}.  
\end{lemma}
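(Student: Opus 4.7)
The plan is to push the left-hand side through Lemma~\ref{L:sphere to parab}, identify the relevant difference of profiles as a weak $L^p(\R^d)$ limit of a rescaled truncation of $f_n\chi_{E_n^M}$, and then apply Fatou's lemma to return to a spherical extension bound. First, I would observe that $(g_M)_n^{j,\bullet} - g_n^{M,j,\bullet}$ is, by construction, the scaled profile \eqref{E:gnj def} built from $\phi_M^{j,\bullet} - \phi^{j,M,\bullet}$, so linearity of Lemma~\ref{L:sphere to parab} (together with its lower-hemisphere mirror obtained by conjugation) gives
$$
\lim_{n\to\infty}\bigl\|\scriptE\bigl((g_M)_n^{j,\bullet} - g_n^{M,j,\bullet}\bigr)\bigr\|_q = \bigl\|\scriptE_\P(\phi_M^{j,\bullet} - \phi^{j,M,\bullet})\bigr\|_q.
$$

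Next, I would identify the right-hand side as a weak limit. The expansion $\dist((\pm\sqrt{1-|\lambda_n\xi|^2},\lambda_n\xi),[e_1])=|\lambda_n\xi|(1+o(1))$ as $n\to\infty$ shows that the spatial cutoff $\chi_{\{\dist(\omega,[e_1])<M\lambda_n\}}$ in $f_n^M$, read in rescaled $\xi$-coordinates, vanishes identically on $\{|\xi|>M(1+o(1))\}$; hence $\phi^{j,M,\bullet}$ is supported in $\{|\xi|\leq M\}$. Combined with $\phi_M^{j,\bullet}:=\phi^{j,\bullet}\chi_{\{|\xi|<M\}}$ and the splitting $f_n=f_n^M+f_n\chi_{E_n^M}$, this gives $\phi_M^{j,\bullet}-\phi^{j,M,\bullet}=\wklim_n \tilde h_n^\bullet$, where
$$
\tilde h_n^\bullet(\xi):=\lambda_n^{d/p}e^{-ix_n^j(\pm\sqrt{1-|\lambda_n\xi|^2},\lambda_n\xi)}(f_n\chi_{E_n^M})(\pm\sqrt{1-|\lambda_n\xi|^2},\lambda_n\xi)\chi_{\{|\xi|<M\}}.
$$
Reversing the change of variables in \eqref{E:gnj def} identifies the scaled profile of $\tilde h_n^\bullet$ as $e^{-ix_n^j\omega}F_n^{M,\bullet}$, where $F_n^{M,\bullet}:=f_n\chi_{E_n^M}\chi_{\{|\omega'|<M\lambda_n\}}\chi_{\{\pm\omega_1>0\}}$ coincides with $f_n\chi_E$ for a set $E\subseteq E_n^M$.

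To conclude, I would revisit the pointwise calculation in the proof of Lemma~\ref{L:sphere to parab} with the varying sequence $\tilde h_n^\bullet$ in place of a fixed $\phi$: the testing kernel $e^{i\lambda_n^{-2}x_1(1\mp\sqrt{1-|\lambda_n\xi|^2})}e^{ix'\xi}\chi_{\{|\xi|<M\}}(\sqrt{1-|\lambda_n\xi|^2})^{-1}$ converges uniformly on the fixed compact set $\{|\xi|<M\}$ (and hence strongly in $L^{p'}(\R^d)$) to $e^{\pm ix_1|\xi|^2/2}e^{ix'\xi}\chi_{\{|\xi|<M\}}$. Pairing the weakly convergent $\tilde h_n^\bullet$ against this strongly convergent kernel yields
$$
\lambda_n^{-\frac{d+2}{q}}e^{\mp i\lambda_n^{-2}x_1}\scriptE\bigl[e^{-ix_n^j\omega}F_n^{M,\bullet}\bigr]\bigl(\mp\lambda_n^{-2}x_1,\lambda_n^{-1}x'\bigr)\longrightarrow\scriptE_\P(\phi_M^{j,\bullet}-\phi^{j,M,\bullet})(x)
$$
pointwise for every $x\in\R^{d+1}$. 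Since the rescaling $\Psi\mapsto \lambda_n^{-(d+2)/q}\Psi(\mp\lambda_n^{-2}\cdot,\lambda_n^{-1}\cdot')$ is an $L^q(\R^{d+1})$ isometry and $\scriptE$ is translation invariant in $L^q$ up to modulation, Fatou's lemma produces
$$
\|\scriptE_\P(\phi_M^{j,\bullet}-\phi^{j,M,\bullet})\|_q\leq\liminf_{n\to\infty}\|\scriptE F_n^{M,\bullet}\|_q\leq\limsup_{n\to\infty}\sup_{E\subseteq E_n^M}\|\scriptE f_n\chi_E\|_q,
$$
which combined with the first display gives the lemma. The most delicate step is justifying that the pointwise limit in the Fatou application is exactly $\scriptE_\P(\phi_M^{j,\bullet}-\phi^{j,M,\bullet})$; this is where one needs both the precise support of $\phi^{j,M,\bullet}$ in $\{|\xi|\leq M\}$ and the uniform convergence of the testing kernel on the fixed compact support of $\tilde h_n^\bullet$, without which one would only recover the inequality up to a multiplicative constant.
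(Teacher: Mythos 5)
Your proof is correct and follows essentially the same route as the paper's: first reduce to $\|\scriptE_\P(\phi_M^{j,\bullet}-\phi^{j,M,\bullet})\|_q$ via Lemma~\ref{L:sphere to parab}, then identify $\phi_M^{j,\bullet}-\phi^{j,M,\bullet}$ as a weak $L^p(\R^d)$ limit of rescaled truncations of $f_n$ supported inside $E_n^M$, obtain pointwise convergence of the rescaled extensions via the weak--strong pairing, and close with Fatou and the $L^q$ isometry of the parabolic rescaling. The only cosmetic difference is in your choice of truncation: the paper pairs directly against the fixed parabolic kernel and arrives at the set $\{|\omega'|<M\lambda_n,\ |f_n|>M\lambda_n^{-d/p},\ \omega_1>0\}$, whereas you keep $f_n\chi_{E_n^M}\chi_{\{|\omega'|<M\lambda_n\}}$ and then argue the discrepancy vanishes in measure; both yield a subset of $E_n^M$ and the same limiting function, so the arguments coincide in substance.
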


\begin{proof}[Proof of Lemma~\ref{L:EphiM to Ephi}]
We give the proof when $\bullet = +$; the other case follows by taking conjugates.  To simplify notation, we omit the $j$ and $+$ from our superscripts. 

By Lemma~\ref{L:sphere to parab}, 
$$
\lim_{n \to \infty}\|\scriptE ((g_M)_n - g_n^{M})\|_q = \|\scriptE_\P(\phi_M-\phi^{M})\|_q.  
$$

Since 
$$
\phi_M(\xi) = \wklim_{n \to \infty} \lambda_n^{d/p}f_n(\sqrt{1-|\lambda_n \xi|^2},\lambda_n\xi) \chi_{\{|\xi|<M\}},
$$
we have
\begin{align}\notag
    &\scriptE_\P(\phi_M-\phi^M)(-x_1,x') \\\notag
    &\qquad
    =\lim_{n \to \infty} \int e^{ix(-\frac12|\xi|^2,\xi)}\lambda_n^{d/p}f_n(\sqrt{1-|\lambda_n\xi|^2},\lambda_n\xi)\, \chi_{\{|\xi|<M\}}\chi_{\{|f_n|>M\lambda_n^{-d/p}\}}\, d\xi\\
    \notag
    &\qquad 
    =
    \lim_{n \to \infty} e^{-i\lambda_n^{-2}x_1}\int e^{i(\lambda_n^{-2}x_1,\lambda_n^{-1}x')(1-\frac12|\xi|^2,\xi)}\lambda_n^{-d/p'}f_n(\sqrt{1-|\xi|^2},\xi)\\
    \label{E:rhs Ephi_M-Ephi^M}
    &\qquad\qquad\qquad\qquad\qquad\times \chi_{\{|\xi|<M\lambda_n\}}\chi_{\{|f_n|>M\lambda_n^{-d/p}\}}\, d\xi.
\end{align}
By H\"older's inequality,
$$
\|\lambda_n^{-d/p'}f_n(\sqrt{1-|\xi|^2},\xi)\chi_{\{|\xi|<M\lambda_n\}}\|_{L^1_\xi} \leq M^{-d/p'}.
$$
For $|\xi|<M\lambda_n$, 
$$
|1-\sqrt{1-|\xi|^2}| \lesssim M^2\lambda_n^2, \qquad |(1-\tfrac12|\xi|^2)-\sqrt{1-|\xi|^2}| \lesssim M^4 \lambda_n^4.
$$
Thus by H\"older's inequality, \eqref{E:rhs Ephi_M-Ephi^M}, and $\frac{d}{p'} = \frac{d+2}{q}$,
\begin{align*}
&\scriptE_\P(\phi_M-\phi^M)(-x_1,x')
\\
&\:=
\lim_{n \to \infty} \lambda_n^{-(d+2)/q}e^{i\lambda_n^{-2}x_1}\scriptE(f_n\chi_{\{|\omega'|<M\lambda_n,|f_n|>M\lambda_n^{-d/p},\omega_1>0\}})(\lambda_n^{-2}x_1,\lambda_n^{-1}x'),
\end{align*}
for all $x$.  Finally, by Fatou and a change of variables,
$$
\|\scriptE_\P(\phi_M-\phi^M)\|_q \leq \|\scriptE(f_n \chi_{\{|\omega'|<M\lambda_n,|f_n|>M\lambda_n^{-d/p},\omega_1>0\}})\|_q.
$$  
\end{proof}

Let $\eps > 0$.  By \eqref{E:bound phi j dot}, $\lim_{j \to \infty} \|\phi^{j,\bullet}\|_p = 0$, so by boundedness of $\scriptE$, $\lim_{j \to \infty} \lim_{n \to \infty} \|\scriptE g_n^j\|_q = 0$.  Hence, by \eqref{E:Efn>M large space} and \eqref{E:EphiM to Ephi}, there exist $M_\eps,J_\eps$ such that $\lim_{n \to \infty} \|\scriptE g_n^{M,j}\|_q < \eps$ for all $M\geq M_\eps$ and $j > J_\eps$.  Let $M\geq M_\eps$.  By \eqref{E:rm q0}, there exists $J_{M,\eps}$ such that $\lim_{n \to \infty} \|\scriptE r_n^{M,J}\|_q < \eps$, for all $J\geq J_{M,\eps}$.  

By the definition of the remainder terms, the generalized Br\'ezis--Lieb lemma, boundedness of $\scriptE$, and the $L^p$-almost orthogonality condition (ii),
\begin{align*}
&\lim_{n \to \infty} \|\scriptE(r_n^{M,J_\eps}-r_n^{M,J_{M,\eps}})\|_q^q = 
\lim_{n \to \infty} \|\scriptE(\sum_{j=J_\eps+1}^{J_{M,\eps}} e^{ix_n^j \omega}g_n^{M,j})\|_q^q \\
& \qquad = 
\sum_{j=J_\eps+1}^{J_{M,\eps}} \lim_{n \to \infty}\|\scriptE g_n^{M,j}\|_q^q
\leq S_{p \to q}^{q-\tilde p} \eps^{q-\tilde p} \sum_{j=J_\eps+1}^{J_{M,\eps}} (\sum_{\bullet \in \{\pm\}}\|\phi^{M,j,\bullet}\|_p^p)^{\tilde p/p}
\lesssim \eps^{q-\tilde p}.
\end{align*}
Since $q>\tilde p$, after changing the value of $\eps$, we may assume that 
\begin{equation} \label{E:ErnMj unif small}
\|\scriptE r_n^{M,J}\|_q < \eps,
\end{equation}
for all $M>M_\eps$ and $J>J_\eps$.  

Finally, we transfer \eqref{E:ErnMj unif small} to the $\scriptE r_n^J$.  The $L^q$-orthogonality (iv) applied with some  $r_n^{J_0}$ in place of $f_n$ implies that $\lim_{n \to \infty} \|\scriptE r_n^J\|_q$ is non-increasing in $J$.  Thus it suffices to bound $\lim_{n \to \infty}\|\scriptE r_n^{J_\eps}\|_q$.  

By the triangle inequality, the definition of the remainder terms, and \eqref{E:ErnMj unif small}; then \eqref{E:Efn>M large space}, $\phi_M^{j,\bullet} \to \phi^{j,\bullet}$ in $L^p$, and \eqref{E:EphiM to Ephi},
\begin{align*}
    \lim_{n \to \infty}\|\scriptE r_n^{J_\eps}\|_q
    \leq O(\eps) + \lim_{M \to \infty} \lim_{n \to \infty} \|\scriptE(f_n-f_n^M)\|_q + \lim_{M \to \infty} \sum_{j=1}^{J_\eps} \|\scriptE(g_n^j-g_n^{j,M})\|_q
     = O(\eps).
\end{align*}
As we have confirmed conclusion (v), the proof of Theorem~\ref{T:large space} is complete.  

\end{proof}

\section{Proof of Theorem~\ref{T:off scaling}}\label{S:off scaling}

\begin{proof}[Proof of Theorem~\ref{T:off scaling}]
Under the hypotheses of Theorem~\ref{T:off scaling}, let $\{f_n\}$ be an $L^p$-normalized extremizing sequence. By Proposition~\ref{P:subcrit freq}, the conditions of Theorem~\ref{T:space1} apply after passing to a subsequence. Letting $\{\phi^j\}$ denote the profiles in the conclusion of that theorem,
\begin{align*}
S_{p \to q} &= \lim_{n \to \infty} \|\scriptE f_n\|_q 
=  \bigl(\sum_{j=1}^\infty \|\scriptE \phi^j\|_q^q\bigr)^{\frac1q} 
\leq S_{p \to q} \bigl(\sum_{j=1}^\infty \|\phi^j\|_p^q\bigr)^{\frac1q}.
\end{align*} 
By Theorem~\ref{T:space1} the profiles satisfy \eqref{E:Lp orthog 1}, which for an $L^p$-normalized sequence gives $\sum_j \|\phi^j\|_p^{\tilde p} \leq 1$. Further, we have assumed that $q>\max\{p,\tfrac{d+2}d p'\} \geq \tilde{p}$. Therefore, 
\begin{align*}
S_{p \to q}\leq S_{p \to q} \sup_j \|\phi^j\|_p^{1-\frac{\tilde p}q} \bigl(\sum_{j=1}^\infty \|\phi^j\|_p^{\tilde p}\bigr)^{\frac1q} \leq S_{p \to q} \sup_j \|\phi^j\|_p^{1-\frac{\tilde p}q} \leq S_{p \to q}.
\end{align*} 
 Equality must hold at each step, and thus, after reordering the profiles, $\|\phi^1\|_p = 1$, $\|\scriptE \phi^1\|_q = S_{p \to q}$, and $\phi^j \equiv 0$ for $j \neq 1$.  Hence by Theorem~2.11 of \cite{LiebLoss},   $e^{-ix_n^1}f_n \to \phi^1$, an extremizer, (strongly) in $L^p$.  
\end{proof}

\section{Antipodally concentrating profiles}\label{S:equal profiles}

In the next section, we will prove Theorems~\ref{T:scaling}, \ref{T:op norms big p}, and \ref{T:op norms small p}, which we recall concern extremizing sequences along the parabolic scaling line $q=\tfrac{d+2}d p'$.  In this section, we prove a preliminary lemma that addresses the interactions between the extensions of pairs of sequences of functions that concentrate antipodally, building on the connection between spherical and parabolic extension previously discussed in Lemma \ref{L:sphere to parab}. This will lead naturally into the proofs of Propositions~\ref{P:op norms} and~\ref{P:alpha beta}, which are also contained in this section.  

We recall that 
$$
\beta_{p,q}:= 2^{\frac1{r'}}\bigl(\tfrac{\Gamma(\frac{q+1}2)}{\sqrt \pi \Gamma(\frac{q+2}2)}\bigr)^{\frac1q},
$$
where $r:=\max\{p,2\}$.

 \begin{lemma}\label{L:poles gnj}
Let $\lambda_n \searrow 0$, let $\phi^+,\phi^-$ be $L^p$ functions on $\R^d$, and define $g_n$ as in \eqref{E:gnj def}.   Then 
\begin{gather} \label{E:poles Lp gn}
\lim_{n \to \infty} \|g_n\|_p = \bigl(\|\phi^+\|_p^p + \|\phi^-\|_p^p\bigr)^{\frac1p},\\
\label{E:poles Egn leq}
\lim_{n \to \infty} \|\scriptE g_n\|_q \leq \beta_{p,q} P_{p \to q}\bigl(\|\phi^+\|_p^p + \|\phi^-\|_p^p\bigr)^{\frac1p}, 
\end{gather}
 and equality in \eqref{E:poles Egn leq} occurs if and only if either $\phi^+ \equiv \phi^- \equiv 0$, or $p \geq 2$, $|\scriptE \phi^+(-x_1,x')| \equiv |\scriptE \phi^-(x_1,x')|$, and the $\phi^\pm$ are both extremizers for $\scriptE_\P$. 
\end{lemma}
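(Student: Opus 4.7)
The plan is to rescale $\|\scriptE g_n\|_q$ via Lemma~\ref{L:sphere to parab} so that the $g_n^\pm$ pieces become modulations of $\scriptE_\P \phi^\pm$, and then to pass to the limit through an oscillatory-averaging identity. A pointwise $\alpha_{p,q}$-bound combined with Minkowski's inequality in $L^{q/p}$ and the definition of $P_{p\to q}$ then conclude the inequality, and tracking equality through each step yields the equality statement.

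Formula~\eqref{E:poles Lp gn} is direct: the supports of $g_n^+, g_n^-$ are disjoint, so it reduces to $\|g_n^\pm\|_p \to \|\phi^\pm\|_p$, which follows from the substitution $\xi = \omega'/\lambda_n$ (producing a Jacobian $1/\sqrt{1-|\lambda_n\xi|^2} \to 1$) and dominated convergence. For~\eqref{E:poles Egn leq}, I apply Lemma~\ref{L:sphere to parab} directly to $g_n^+$, and to the reflection $\omega\mapsto -\omega$ of $g_n^-$ (which puts its support into the upper hemisphere at the cost of reflecting $x$ and $\xi'$, and replaces $\phi^-(\xi)$ by $\phi^-(-\xi)$). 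Combining gives
\[
\lim_{n\to\infty}\bigl\|\scriptE g_n-\lambda_n^{(d+2)/q}\bigl(e^{ix_1}\scriptE_\P\phi^+(-\lambda_n^2 x_1,\lambda_n x')+e^{-ix_1}\scriptE_\P\phi^-(\lambda_n^2 x_1,\lambda_n x')\bigr)\bigr\|_q=0.
\]
Substituting $y=(\lambda_n^2 x_1,\lambda_n x')$ (which preserves $L^q$ after cancelling the $\lambda_n$-factors, using $(d+2)/q=d/p'$) and factoring out the unimodular $e^{i\lambda_n^{-2}y_1}$ reduces computing $\lim\|\scriptE g_n\|_q^q$ to computing $\lim_{n\to\infty}\int|U(y)+e^{-2i\lambda_n^{-2}y_1}V(y)|^q\,dy$, with $U(y):=\scriptE_\P\phi^+(-y_1,y')$ and $V(y):=\scriptE_\P\phi^-(y_1,y')$.

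The main obstacle is the oscillatory averaging identity: for any $U,V\in L^q(\R^{d+1})$ and $\mu_n\to\infty$,
\[
\lim_{n\to\infty}\int|U(y)+e^{i\mu_n y_1}V(y)|^q\,dy=\int\frac{1}{2\pi}\int_0^{2\pi}|U(y)+e^{i\theta}V(y)|^q\,d\theta\,dy.
\]
I would first verify this for continuous, compactly supported $U,V$ by Fourier-expanding the smooth $2\pi$-periodic function $\phi\mapsto|U(y)+e^{i\phi}V(y)|^q$ in $\phi$ (pointwise in $y$): the zeroth coefficient equals the inner $\theta$-average, while Riemann--Lebesgue in $y_1$ annihilates each higher mode (uniform convergence of the Fourier series and compact support justify summing). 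General $U,V\in L^q$ follow by a three-epsilon approximation using the pointwise inequality $\bigl||a|^q-|b|^q\bigr|\lesssim_q (|a|+|b|)^{q-1}|a-b|$ together with H\"older, giving $\bigl|\|U+e^{i\phi}V\|_q^q-\|U'+e^{i\phi}V'\|_q^q\bigr|\lesssim_q (\|U\|_q+\|V\|_q+\|U'\|_q+\|V'\|_q)^{q-1}(\|U-U'\|_q+\|V-V'\|_q)$ uniformly in $\phi$.

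With this identity, I apply the pointwise bound $(2\pi)^{-1}\int_0^{2\pi}|a+e^{i\theta}b|^q\,d\theta\le\alpha_{p,q}^q(a^p+b^p)^{q/p}$, which is immediate from the definition~\eqref{E:def alpha} after reducing to $a,b\ge0$ by rotation invariance. Then the triangle inequality in $L^{q/p}$ (valid since $q\ge p$ strictly in our range) yields $\int(|U|^p+|V|^p)^{q/p}\,dy\le(\|U\|_q^p+\|V\|_q^p)^{q/p}$. Inserting $\|U\|_q,\|V\|_q\le P_{p\to q}\|\phi^\pm\|_p$ and $\alpha_{p,q}\le\beta_{p,q}$ (Proposition~\ref{P:alpha beta}) gives~\eqref{E:poles Egn leq}. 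For the equality analysis: if $p<2$ then $\alpha_{p,q}<\beta_{p,q}$ strictly, so~\eqref{E:poles Egn leq} is strict unless $\phi^+\equiv\phi^-\equiv 0$. If $p\ge 2$, $\alpha_{p,q}=\beta_{p,q}$ with the maximum in~\eqref{E:def alpha} attained only at $t=1$; tracing equality through the chain forces, in order, $|U(y)|=|V(y)|$ a.e.\ (i.e., $|\scriptE_\P\phi^+(-x_1,x')|\equiv|\scriptE_\P\phi^-(x_1,x')|$), proportionality of $|U|^p$ and $|V|^p$ (automatic from the first), and $\|\scriptE_\P\phi^\pm\|_q=P_{p\to q}\|\phi^\pm\|_p$, i.e., $\phi^\pm$ extremal for $\scriptE_\P$; the converse is immediate by reversing each inequality.
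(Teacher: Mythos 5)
Your overall reduction — Lemma~\ref{L:sphere to parab}, a change of variables to peel off the $\lambda_n$ scaling, the oscillatory averaging identity, and then a pointwise-in-$y$ bound followed by a Minkowski step — mirrors the structure of the paper's argument. But the pointwise bound you use is different and creates a logical problem.

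\textbf{Circularity.} The paper passes from $\tfrac1{2\pi}\int_0^{2\pi}\|e^{i\theta}U+V\|_q^q\,d\theta$ directly to $\beta_{p,q}^q P_{p\to q}^q(\|\phi^+\|_p^2+\|\phi^-\|_p^2)^{q/2}$ via the \emph{quadratic} pointwise inequality $\tfrac1{2\pi}\int_0^{2\pi}|a+e^{i\theta}b|^q\,d\theta\le 2^{q/2}r_q\,(a^2+b^2)^{q/2}$ (with $r_q=\Gamma(\tfrac{q+1}2)/(\sqrt\pi\,\Gamma(\tfrac{q+2}2))$, equality iff $a=b$; this is the Frank--Lieb--Sabin inequality), triangle inequality in $L^{q/2}$, and then a power-mean comparison of $(\cdot)^{1/2}$ against $(\cdot)^{1/p}$ separated into $p\ge2$ and $p<2$. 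You instead use the pointwise bound with the constant $\alpha_{p,q}$ directly from \eqref{E:def alpha} and Minkowski in $L^{q/p}$, and then quote ``$\alpha_{p,q}\le\beta_{p,q}$ (Proposition~\ref{P:alpha beta})'' (and, for the equality analysis, ``$\alpha_{p,q}<\beta_{p,q}$ for $p<2$'' and ``the maximum is only at $t=1$ when $p\ge 2$''). But Proposition~\ref{P:alpha beta} is \emph{proved in the paper as a corollary of Lemma~\ref{L:poles gnj} and Proposition~\ref{P:op norms}}; as the text is ordered, your appeal to it here is circular. To repair this you must supply an independent comparison of the two scalar constants: namely, from the FLS pointwise bound and the power-mean inequality $(\tfrac{1+t^p}{2})^{1/p}\gtrless(\tfrac{1+t^2}{2})^{1/2}$ for $p\gtrless 2$ (equality iff $t=1$) one gets $\alpha_{p,q}=\beta_{p,q}$ with unique maximum at $t=1$ when $p\ge2$ and $q>2$, and $\alpha_{p,q}<\beta_{p,q}$ when $1<p<2$. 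Once you insert that short calculation, your chain is sound, the equality analysis you describe goes through, and — interestingly — your version gives the intermediate bound $\lim_n\|\scriptE g_n\|_q\le\alpha_{p,q}P_{p\to q}(\|\phi^+\|_p^p+\|\phi^-\|_p^p)^{1/p}$, which is what Proposition~\ref{P:op norms} shows is actually achieved. In effect you would be deriving the lemma and Proposition~\ref{P:alpha beta} together rather than one after the other, which is a perfectly good reorganization, but it needs to be made explicit rather than closed by a forward reference.

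\textbf{Minor.} The Riemann--Lebesgue justification of the oscillatory averaging identity is correct in substance (it is essentially Lemma~6.1 of \cite{FLS}), but ``uniform convergence of the Fourier series'' should be justified: for $q>2$ the map $\phi\mapsto|a+e^{i\phi}b|^q$ is $C^1$ uniformly for $a,b$ in a bounded set (the derivative degenerates as $(\phi-\pi)^{q-1}$ near $a=b$, $\phi=\pi$), which does give uniform convergence; alternatively, approximate by Fej\'er/Ces\`aro means or invoke Stone--Weierstrass and then Riemann--Lebesgue. Also, note that the sign you obtain between the two parabolic pieces differs from the paper's displayed one, but this is immaterial since the $\theta$-average washes out all fixed phases.
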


\begin{proof}
The identity \eqref{E:poles Lp gn} follows by parametrizing the upper and lower hemispheres, rescaling, and applying the dominated convergence theorem.

Now we turn to inequality \eqref{E:poles Egn leq}.  Identity \eqref{E:gnj def}, our Lemma \ref{L:sphere to parab}, and Lemma 6.1 of \cite{FLS} imply that 
\begin{align*}
\lim_{n \to \infty} \|\scriptE g_n\|_q 
&= \lim_{\alpha \to \infty} \bigl(\|e^{i\alpha x_1}\scriptE_\P\phi^+(-x_1,x') - \scriptE_\P \phi^-(x_1,x')\|_q^q\,\bigr)^{\frac1q}\\
&= \bigl(\tfrac1{2\pi} \int_0^{2\pi} \|e^{i\theta}\scriptE_\P\phi^+(-x_1,x') - \scriptE_\P \phi^-(x_1,x')\|_q^q\, d\theta\bigr)^{\frac1q}\\
&\leq 2^{\frac12} \bigl(\tfrac{\Gamma(\frac{q+1}2)}{\sqrt \pi \Gamma(\frac{q+2}2)}\bigr)^{\frac1q} P_{p \to q} \bigl(\|\phi^+\|_p^2 + \|\phi^-\|_p^2\bigr)^{\frac12},
\end{align*}
with equality if and only if either $|\scriptE_\P\phi^+(-x_1,x')| = |\scriptE_\P\phi^-(x_1,x')|$ a.e.\ and $\phi^\pm$ are both  extremizers for \eqref{E:rest/extn parab}, or $\phi^\pm \equiv 0$.

If $p \geq 2$,
$$
\bigl(\|\phi^+\|_p^2 + \|\phi^-\|_p^2\bigr)^{\frac12} \leq 2^{\frac12-\frac1p}\bigl(\|\phi^+\|_p^p+\|\phi^-\|_p^p\bigr)^{\frac1p},
$$
with equality if and only if $\|\phi^+\|_p = \|\phi^-\|_p$.  If $p < 2$,
$$
\bigl(\|\phi^+\|_p^2 + \|\phi^-\|_p^2\bigr)^{\frac12} \leq \bigl(\|\phi^+\|_p^p + \|\phi^-\|_p^p\bigr)^{\frac1p},
$$
with equality if and only if $\phi^+ \equiv 0$ or $\phi^- \equiv 0$.  This proves \eqref{E:poles Egn leq}, and the remark following on cases of equality.
\end{proof}

Adapting the construction of the $g_n$, yields the lower bound on the extension operator norm from Proposition~\ref{P:op norms}. 

\begin{proof}[Proof of Proposition~\ref{P:op norms}]
{ If $ P_{p \to q}$ is infinite, then $S_{p \to q}$ is as well, so in this case there is nothing to prove. } 
 Let  $1 \leq p < \tfrac{2(d+1)}d$ and set $q:=\tfrac{d+2}d p'$ and suppose $P_{p \to q}<\infty$. Further
 take $t\in[0,1]$, and suppose that $\phi^{\pm}$ are chosen such that each is an extremizer for $\scriptE_\P$, $\|\phi^+\|_p=t\|\phi^-\|_p$ and $|\scriptE_\P \phi^+(-x_1,x')| \equiv t |\scriptE_\P \phi^-(x_1,x')|$. We may always construct such a pair. Indeed, an extremizer for the parabolic extension problem, $\phi^+$, exists by  \cite{BSparab}, and after setting   $\phi^-(x)= t\overline{\phi^+(-x)}$ a direct computation shows that these conditions are satisfied.   Let $\lambda_n\to 0$. Analogously to \eqref{E:gnj def} set
\begin{equation}\label{E:cand conc ext} 
    g_n^{\pm}(\omega):=   \lambda_n^{-d/p}\phi^{\pm}( \lambda_n^{-1}\omega')\chi_{\{\pm\omega_1>0\}}\chi_{\{|\omega'|<\tfrac12\}}, \qquad g_n:= g_n^{+}+ g_n^{-}. 
\end{equation}
An argument similar to \eqref{E:poles Lp gn}, shows that, as $\|\phi^+\|_p=t\|\phi^-\|_p$ and  $\phi^{+}$ is extremizing, 
\begin{align*} \label{E:cand Lp}
\lim_{n \to \infty} \|g_n\|_p = & \|\phi^+\|_p \bigl(1 + t^p\bigr)^{\frac1p}\\
=& P_{p \to q}^{-1} \|\scriptE_\P \phi^+\|_q \bigl(1 + t^p\bigr)^{\frac1p}.
\end{align*}

Next we compute 
\begin{align*}
\lim_{n \to \infty} \|\scriptE g_n\|_q 
&= \lim_{\alpha \to \infty} \bigl(\|\scriptE_\P\phi^+(-x_1,x') - t\, e^{i\alpha x_1}\scriptE_\P \phi^-(x_1,x')\|_q^q\,\bigr)^{\frac1q}\\
&= \bigl(\tfrac1{2\pi} \int_0^{2\pi} \|\scriptE_\P\phi^+(-x_1,x') -  t\, e^{i\theta} \scriptE_\P \phi^-(x_1,x')\|_q^q\, d\theta\bigr)^{\frac1q}\\
&= \|\scriptE_\P\phi^+\|_q\bigl(\tfrac1{2\pi} \int_0^{2\pi} |1 + t\, e^{i\theta}|^q \, d\theta\bigr)^{\frac1q},
\end{align*}
using that  $|\scriptE \phi^+(-x_1,x')| \equiv t |\scriptE \phi^-(x_1,x')|$. 

Thus, 
\[ \lim_{n \to \infty} \frac{\|\scriptE g_n\|_q}{\|g_n\|_p}  = P_{p \to q} \frac{ \bigl(\tfrac1{2\pi} \int_0^{2\pi} |1 + t\, e^{i\theta}|^q \, d\theta\bigr)^{\frac1q}}{ \bigl(1 + t^p\bigr)^{\frac1p}}.\]
Whence the maximum of this quantity for $t\in[0,1]$ is a lower bound for $S_{p\to q}$, the operator norm of $\scriptE$. Note that $[0,1]$ is the natural domain for $t$ as it represents the ratio of the smaller $L^p$-norm to the larger, for two concentrating profiles. 
\end{proof}

Proposition \ref{P:alpha beta} is an immediate corollary of Proposition~\ref{P:op norms} and Lemma~\ref{L:poles gnj}, which characterizes concentrating extremizing sequences when $p\geq 2$.

\subsection*{An aside on $\alpha$'s and $\beta$'s}
Let $\bar{S}_{p \to q}$ denote the supremum over all antipodally concentrating sequences $g_n$ (of the form \eqref{E:gnj def}) of the quantity 
$$
\lim_{n \to \infty} \|\scriptE g_n\|_q/\|g_n\|_p.  
$$
In this section, we have shown that $\alpha_{p,q}P_{p \to q} \leq \bar S_{p \to q} \leq \beta_{p,q}P_{p \to q}$, for $p$ and $q$ along the scaling line, that both inequalities are equalities when $p \geq 2$, and that the second inequality is strict when $1 < p < 2$.  In the latter range, however, a bit more information might lead to a sharper version of Theorem~\ref{T:op norms small p}.  Namely, two questions that seem interesting are whether $\bar S_{p \to q}$ might equal $\alpha_{p,q}P_{p \to q}$, and which value of $t$ maximizes the right hand side of \eqref{E:def alpha}.  These questions may be modified slightly into a more general framework, and we ask what is the value of the quantity
$$
\sup_{0 \not\equiv F,G \in L^q} 
\frac{\bigl(\tfrac1{2\pi} \int_0^{2\pi} \|e^{i\theta}F+G\|_q^q\,d\theta\bigr)^{1/q}}{\bigl(\|F\|_q^p+\|G\|_q^p\bigr)^{1/p}},
$$
do maximizers exist, and, if so, what are their properties, in the case $1<p<2<q$?  

Numerical computations (unpublished) due to Arthur DressenWall, an undergraduate student at Macalester, lead us to ask whether the right hand side of \eqref{E:def alpha} might be attained when $t$ is either 0 or 1, in which case we would have 
$$
\alpha_{p,q} = \max\left\{ 2^{\frac1{p'}}\bigl(\frac{\Gamma(\frac{q+1}2)}{\sqrt \pi \Gamma(\frac{q+2}2)}\bigr)^{\frac1q}, 1\right\},
$$ 
but we are not quite bold enough to formulate this as a conjecture.

\section{Proof of Theorem~\ref{T:scaling}} \label{S:scaling line}

The proof of Theorem~\ref{T:scaling} follows a similar outline to the proof of Theorem~\ref{T:off scaling}, with the added complication of handling the profiles from the case of concentration.  

\begin{proof}[Proof of Theorem~\ref{T:scaling}]
Under the hypotheses of Theorem~\ref{T:scaling}, 
we let $\{f_n\}$ be an $L^p$-normalized extremizing sequence of \eqref{E:restriction/extension}. Applying one stage of the frequency decomposition in Theorem~\ref{T:freq} (\textit{\`a la} \cite{Lieb83}),
\begin{equation} \label{E:EF_n^1+ER_n^1}
\begin{aligned}
    (S_{p \to q})^q &= \lim_{n \to \infty} \|\scriptE f_n\|_q^q = \lim_{n \to \infty} \|\scriptE F_n^1\|_q^q + \|\scriptE R_n^1\|_q^q \\
    &\leq \lim_{n \to \infty} \max\bigl\{\|\scriptE F_n^1\|_q^{q-p},\|\scriptE R_n^1\|_q^{q-p}\}(\|\scriptE F_n^1\|_q^p + \|\scriptE R_n^1\|_q^p) 
    \\
    &\leq \lim_{n \to \infty}(S_{p \to q})^q (\|F_n^1\|_p^p + \|R_n^1\|_p^p) = (S_{p \to q})^q.
\end{aligned}
\end{equation}
Passing to a subsequence, we may assume that all of the norms in \eqref{E:EF_n^1+ER_n^1} converge.  By reordering, we may assume that $\lim_{n \to \infty} \|\scriptE F_n^1\|_q \neq 0$.  As all inequalities in \eqref{E:EF_n^1+ER_n^1} must be equalities, $\scriptE R_n^1 \to 0$ in $L^q$ (first inequality), and $F_n^1$ is extremizing and $R_n^1 \to 0$ in $L^p$ (second inequality).  In other words, $f_n$ obeys the hypotheses of either Theorem~\ref{T:space1} or, after applying a sequence of rotations, of Theorem~\ref{T:large space}.  

If we are in the case of nonconcentration, described in the hypotheses of Theorem~\ref{T:space1}, we may follow the proof of Theorem~\ref{T:off scaling} from Section~\ref{S:off scaling} to see that $f_n$ converges in $L^p$ to an extremizer.

Thus, it remains to consider the case of antipodal concentration, in which, by neglecting the role of rotations, we may apply Theorem~\ref{T:large space}.  In the notation of that theorem, we have
\begin{equation} \label{E: g extrem}
\begin{aligned}
    (S_{p \to q})^q 
    &= \sum_{j=1}^\infty  \limsup_{n \to \infty} \|\scriptE g_n^j\|_q^q 
    \leq (S_{p \to q})^q \sum_{j=1}^\infty \limsup_{n\to\infty} \|g_n^j\|_p^q \\
    &= (S_{p \to q})^q\sum_{j=1}^\infty (\|\phi^{j,+}\|_p^p+\|\phi^{j,-}\|_p^p)^{q/p}
    \\&\leq (S_{p \to q})^q\sup_j (\|\phi^{j,+}\|_p^p+\|\phi^{j,-}\|_p^p)^{\frac{q-\tilde p}{p}}\lim_{n \to \infty}\|f_n\|_p^{\tilde p}\\
    &= (S_{p \to q})^q\sup_j (\|\phi^{j,+}\|_p^p+\|\phi^{j,-}\|_p^p)^{\frac{q-\tilde p}{p}} \leq (S_{p \to q})^q\cdot 1,
\end{aligned}
\end{equation}
and equality holds at each step in the above argument.  In particular, after reordering,
$$
\|\phi^{1,+}\|_p^p + \|\phi^{1,-}\|_p^p = 1,
$$
and $\phi^{j,\pm} \equiv 0$, for $j \neq 1$.  

Since 
$$
\phi^{j,\pm} = \wklim \lambda_n^{d/p}e^{-ix_n^j(\pm\sqrt{1-|\lambda_n\xi|^2},\lambda_n\xi)}f_n(\pm\sqrt{1-|\lambda_n\xi|^2},\lambda_n\xi)\chi_{\{|\xi|<\frac12 \lambda_n^{-1}\}},
$$ 
weak lower semi-continuity of $L^p$ norms  gives 
 $$
 \liminf_{n\to\infty} \|f_n\chi_{\{\pm \xi_1>0\}}\|_p\geq\|\phi^{1,\pm}\|_p. 
 $$
 As the sets $\{\pm \xi_1>0\} $ are disjoint, 
  $$
  1=\liminf_{n\to\infty}  \sum_{\bullet = +,-}\|f_n\chi_{\{\bullet \xi_1>0\}}\|_p^p\geq\|\phi_1^{1,+}\|_p^p+\|\phi_1^{1,-}\|_p^p\geq 1. 
  $$
Therefore, 
$$
\|\phi^{1,\pm}\|_p = \lim_{n\to\infty} \|f_n\chi_{\{\pm \xi_1>0\}}\|_p.
$$ 
Weak convergence plus convergence of norms implies  strong convergence, i.e., 
 \[  
 \lambda_n^{d/p}e^{-ix_n^1(\pm\sqrt{1-|\lambda_n\xi|^2},\lambda_n\xi)}f_n(\pm\sqrt{1-|\lambda_n\xi|^2},\lambda_n\xi)\chi_{\{|\xi|<\frac12 \lambda_n^{-1}\}} \to \phi_j^{k,\pm}, \qtq{in $L^p$,}
 \] 
 which completes the proof of Theorem~\ref{T:scaling}.
\end{proof}

We also have a bit more information.  After inserting the equation $\lim\|g_{n}^1\|_p^q =1$, equality in  \eqref{E: g extrem} becomes 
\begin{equation}\label{E:gn eq}
    \lim_{n \to \infty} \|\scriptE g_n^1\|_q =S_{p \to q} \|g_n^1\|_p = S_{p \to q}\bigl(\|\phi^+\|_p^p + \|\phi^-\|_p^p\bigr)^{\frac1p}
\end{equation} 

Depending on the values of $p, S_{p \to q}$, and $P_{p \to q}$, this equality has different implications. 

\begin{proof}[Proof of Theorems  \ref{T:op norms big p} and \ref{T:op norms small p}]

If $S_{p \to q}> \beta_{p \to q} P_{p \to q}$, then \eqref{E:gn eq} and \eqref{E:poles Egn leq} create a contradiction, ruling out the possibility of concentration. In this case, extremizers exist, and extremizing sequences possess convergent (modulo symmetries) subsequences.  

If $p\geq 2$ and $S_{p \to q}= \alpha_{p \to q} P_{p \to q}= \beta_{p \to q} P_{p \to q}$, then \eqref{E:gn eq} implies that the equality case of  Lemma~\ref{L:poles gnj} holds, which prescribes the manner of concentration. 

If $p< 2$ and $S_{p \to q}= \alpha_{p \to q} P_{p \to q}$, then the construction from the proof of Proposition \ref{P:op norms} gives a possible case of equality. 
\end{proof}


\end{document}